\newtheorem{theorem}{Theorem}[section]
\newtheorem{assumption}{Assumption}
\newtheorem{proposition}[theorem]{Proposition}
\newtheorem{corollary}[theorem]{Corollary}
\newtheorem{definition}[theorem]{Definition}
\theoremstyle{definition}
\newcommand{\bu}{\mathbf{u}}
\newcommand{\bp}{\mathbf{p}}
\newcommand{\dx}{\dot{x}}
\newcommand{\dq}{\dot{q}}
\newcommand{\Tt}{^{{\mbox{\tiny \bf \sf T}}}}
\newcommand{\J}{{\cal{J}}}
\newcommand{\K}{{\cal{K}}}
\newcommand{\M}{{\cal{M}}}
\newcommand{\B}{{\cal{B}}}
\newcommand{\F}{{\cal{F}}}
\renewcommand{\L}{{\cal{L}}}
\newcommand{\V}{{\cal{V}}}
\newcommand{\W}{{\cal{W}}}
\newcommand{\D}{{\cal{D}}}
\newcommand{\bx}{{\mathbf{x}}}
\newcommand{\bq}{{\mathbf{q}}}
\newcommand{\bv}{{\mathbf{v}}}
\newcommand{\bw}{{\mathbf{w}}}
\newcommand{\bH}{H}
\newcommand{\bB}{{\mathbf{B}}}
\newcommand{\bN}{{\mathbf{N}}}
\newcommand{\bR}{{\mathbf{R}}}
\newcommand{\bA}{{\mathbf{A}}}
\newcommand{\tq}{{\tilde{q}}}
\newcommand{\tP}{{\tilde{P}}}
\newcommand{\tU}{{\tilde{U}}}
\newcommand{\tx}{{\tilde{x}}}
\newcommand{\tcK}{{\tilde{\K}}}
\newcommand{\tcJ}{{\tilde{\J}}}
\newcommand{\hbA}{{\hat{\bA}}}
\newcommand{\setZ}{\mathbb{Z}}
\newcommand{\setU}{\mathbb{U}}
\newcommand{\setP}{\mathbb{P}}
\newcommand{\otheta}{\overline{\theta}}
\DeclareMathOperator*{\argmax}{arg\,max}
\def\expandafter\normalsize\expandafter{%
    \normalsize
    \setlength\abovedisplayskip{5pt}
    \setlength\belowdisplayskip{5pt}
    \setlength\abovedisplayshortskip{5pt}
    \setlength\belowdisplayshortskip{5pt}
}
\newcommand{\vL}{\cal{\L}}
\begin{document}
\title{A simplex-type algorithm for continuous linear programs with constant coefficients.\thanks{
Research supported in part by Israel Science Foundation Grant and 711/09  and 286/13.
}}

%

\author{Evgeny Shindin\thanks{
IBM Research,
 Haifa,
Mount Carmel 31905, Israel; 
email  {\tt evgensh@il.ibm.com}.
Research supported in part by
Israel Science Foundation Grant and 711/09  and 286/13.
}  \and
Gideon Weiss \thanks{
Department of Statistics,
The University of Haifa,
Mount Carmel 31905, Israel; 
email   {\tt gweiss@stat.haifa.ac.il}.
Research supported in part by
Israel Science Foundation Grant and 711/09  and 286/13.
}
}


\date{November 15,  2018}  

\maketitle
\begin{abstract}
We consider continuous linear programs over a continuous finite time horizon $T$, with a constant coefficient matrix, linear right hand side functions and linear cost coefficient functions.  Specifically,  we search for optimal solutions in the space of measures or of functions of bounded variation.  
These models generalize the separated continuous linear programming models and their various duals, as formulated in the past by Anderson, by Pullan, and by Weiss. In previous papers we formulated a symmetric dual and have shown strong duality. We also have presented a detailed description of optimal solutions and have defined a combinatorial analogue to basic solutions of standard LP. In this paper we present an algorithm
which solves this class of problems in a finite bounded number of steps, using an analogue of the simplex method, in the space of measures.
\end{abstract}

\medskip

\section{Introduction}
\label{sec.introalgorithm}
This paper presents a finite, exact, simplex-type algorithm for the solution of the symmetric pair of dual continuous linear programs of the form:
\begin{eqnarray}
\label{eqn.mpclp}
&\max & \int_{0-}^T (\gamma+ (T-t)c)\Tt dU(t) \nonumber\\
\mbox{M-CLP}\quad & \mbox{s.t.} & \qquad U(0-) = 0 \nonumber\\
 && \qquad A\, U(t) + x(t) \quad = \beta + b t, 
\quad 0 \le t \le T, \\
 &&  U(t)\ge 0 \mbox{ non-decreasing right continuous,}\; x(t)\ge 0,\; t\in[0,T].\nonumber 
\end{eqnarray}
\begin{eqnarray}
\label{eqn.mdclp}
&\min & \;  \int_{0-}^T (\beta+(T-t)b)\Tt dP(t)  \nonumber \\
\mbox{M-CLP$^*$}\quad  & \mbox{s.t.} & \qquad P(0-) = 0 \nonumber\\
&& \qquad  A\Tt P(t) - q(t) \quad = \gamma + c t,  
\quad 0 \le t  \le T, \\
 &&  P(t)\ge 0 \mbox{ non-decreasing right continuous,}\; q(t)\ge 0,\; t\in[0,T].\nonumber 
\end{eqnarray}
Here $A$ is a $K\times J$ constant matrix, $\beta,b, \gamma, c$ are constant vectors of corresponding dimensions,  the integrals are Lebesgue-Stieltjes, and  include the jumps at 0  in $U,P$.  The unknowns are vectors of cumulative control functions $U, P$ and vectors of non-negative slack or state functions $x, q$, over the time horizon $[0,T]$.   It is convenient to think of dual time as running backwards, so that $P(T-t)$ is the vector of dual variables that correspond to the constraints of (\ref{eqn.mpclp}) at time $t$, and $U(t)$ correspond to the constraints of  (\ref{eqn.mdclp}) at time $T-t$.  The special feature here is that the controls are measures, or equivalently functions of bounded variation.
We refer to the  coefficients of the matrix $A$ as the structure parameters, to $b,\,c$ as the rate parameters, and to $\beta,\gamma,T$ as the boundary parameters of the M-CLP problem.

This paper continues research described in \cite{weiss:08,shindin-weiss:13,shindin-weiss:14}.
The algorithm for the solution of M-CLP builds on and extends the algorithm \cite{weiss:08} for solution of SCLP, and is somewhat similar in principle.  While some SCLP problems cannot be solved with the algorithm of \cite{weiss:08}, every SCLP problem can be solved as a special case of an M-CLP problem, by the algorithm presented in this paper.
The problem (\ref{eqn.mpclp}) is solved parametrically.  It  starts from an artificial set of boundary parameters $T_0,\beta_0,\gamma_0$ that have a simple optimal solution, and moves in a finite bounded number of steps along a parametric straight line $\L(\theta),\, 0\le \theta \le 1$,  to the boundary parameters $\beta,\gamma,T$ of the original problem, where in each step an M-CLP pivot is performed.  


At this stage it is too early to estimate the efficiency of this algorithm.  Like standard LP simplex its worst case performance is exponential.  It is however worth mentioning here, that 
we are currently in the process of re-programming the SCLP algorithm, beyond the pilot inefficient MATLAB implementation that was reported in \cite{weiss:08}.  While we have not yet stabilized the improved code, we made some improvements to the MATLAB current code.  In comparing our current implementation of the simplex based algorithm \cite{weiss:08}, against an optimized uniform discretization solved by CPLEX we have found that the simplex based  algorithm can outperform CPLEX both in a run-time (more then 10 times faster for some problems) and in the quality of the obtained solutions.  Moreover we have found that the  simplex based algorithm is able to solve very large problems (10000 controls and 10000 constraints), and these problems  can only be solved by CPLEX with very coarse discretization.
Based on these encouraging results, we expect that the M-CLP simplex based algorithm described in this paper, will be competitive with the discretization approach.

\subsection{Background and motivation}
These problems belong to the area of continuous linear programming (CLP), as pioneered by Bellman in 1953, and further discussed and investigated in \cite{grinold:70,levinson:66,tyndall:65,tyndall:67}.  Bellman, was motivated by economic problems, in particular the optimization of continuous time Leontief input output economics, which also motivated Dantzig \cite{dantzig:51}.  No direct solution method for these continuous time problems has emerged, but it was immediately observed that by discretizing time these problems could be approximated by standard linear programs.  Among others, discrete time Leontief models motivated the Dantzig-Wolfe decomposition method \cite{dantzig-wolfe:60}.   
In 1978 Anderson \cite{anderson:81} formulated the less general separated continuous linear programming  problem (SCLP), motivated by some job shop scheduling problems.  Further work by Anderson and co-workers included \cite{anderson-philpott:89} and a book \cite{anderson-nash:87}, and much progress was achieved by Pullan \cite{pullan:93,pullan:95,pullan:96,pullan:97,pullan:00}, who formulated a non-symmetric dual, proved strong duality, and devised an algorithm based on a sequence of discretizations that converged to optimality.  SCLP found several important applications, including beside job-shop scheduling also control of queueing networks \cite{nazarathy-weiss:09}, quickest transshipment problem\cite{hoppe-tardos:00}, and general network flows over time \cite{fleischer-tardos:98,fleischer-skutella:07}.   However, the solution methods for these problems remained by discretization of time.  Several ingenious methods for efficient approximations to solutions of SCLP have appeared:  
Fleischer-Sethuraman \cite{fleischer-sethuraman:05} use geometrically scaled discretization, Bertsimas-Luo \cite{bertsimas-luo:98} use a non-convex quadratic programming formulation, and Bampou-Kuhn \cite{bampou-kuhn:12} use  polynomial approximations.

The first algorithm to provide an exact  finite solution method for SCLP was devised by Weiss \cite{weiss:08} who considered the following version of SCLP:
\begin{eqnarray}
\label{eqn.psclp}
&\max & \int_0^T ((\gamma\Tt+(T-t) c\Tt) u(t) + d\Tt x(t))\, dt \nonumber\\
\mbox{SCLP    }\quad \quad &\mbox{s.t.} & \int_0^t G u(s) ds +F x(t) \le
\alpha + a t, \\
&&  \quad  H u(t)  \quad \quad \quad \quad \le b,
\nonumber
\\ &&  x(t),u(t) \ge 0, \quad t \in [0,T]  \nonumber.
\end{eqnarray}
where $G,\,H,\,F$ are fixed 
matrices, $\alpha,\,a,\,\gamma,\,c,\,b,\,\,d$ are fixed vectors, and the  unknowns  are bounded measurable controls $u_j(t)$ and absolutely continuous states $x_k(t)$.
For this problem Weiss has  formulated a symmetric dual problem with bounded measurable controls $p_k(t)$ and  states $q_j(t)$,  found a combinatorial description of the extreme points of the feasible regions of the primal and dual, and devised a simplex type algorithm that solves these problems in a finite bounded number of steps, all of this under some sufficient conditions.  
We note that many of the applications of SCLP can indeed be formulated as (\ref{eqn.psclp}).

The results of \cite{weiss:08} left a large gap in the theory, since there is no strong duality between the SCLP problem (\ref{eqn.psclp}) and its symmetric dual.  Therefore it is possible  that SCLP (\ref{eqn.psclp}), or its dual, or both, have no optimal solution even if both are  feasible.  In any of these cases, the algorithm devised by Weiss  fails to work.  

The main idea in \cite{shindin-weiss:13,shindin-weiss:14} and in the current paper, is to formulate the problems M-CLP/M-CLP$^*$ (\ref{eqn.mpclp}), (\ref{eqn.mdclp}) in the space of measures, or equivalently in the space of functions of bounded variation.   Already Pullan's dual was formulated in the space of measures, but it was not a symmetric dual, and could not be used in a simplex type algorithm.  The only other paper that we know of that looks for solutions in the space of measures is Papageorgiou \cite{papageorgiou:82}.  The  formulation of M-CLP/M-CLP$^*$  as (\ref{eqn.mpclp}), (\ref{eqn.mdclp}),  with the unknowns as functions of bounded variation, turns out to {\em achieve complete analogy with standard, finite dimensional linear programs (LP)}.  It is shown in \cite{shindin-weiss:13,shindin-weiss:14} that strong duality holds, with a complete combinatorial description of the finite number of extreme points of the feasible region.  The last step in the analogy with  LP is a finite simplex type algorithm derived in this paper.  Here M-CLP stands for CLP in the space of measures.

\subsection{Overview and contribution}
At this point it is useful to consider the differences between SCLP (\ref{eqn.psclp}), and M-CLP (\ref{eqn.mpclp}).  
Optimal solutions of SCLP have piecewise constant control rates $u_j(t)$ and continuous piecewise linear states $x_k(t)$, with complementary slack dual controls $p_k(T-t)$ and dual states $q_j(T-t)$.  Hence the solution can be summarized by the breakpoints 
$0=t_0<\cdots<t_N=T$, by the values of $u_j(t)$ and of the derivatives $\dx_k(t)$ and their duals in each interval, and by the boundary values of the primal $x_k(0)$, and dual $q_j(0)$.  
These are determined as follows:  The boundary values $x_k(0)$ and $q_j(0)$ are determined by two standard LPs, with coefficient matrices $F$ and $H$ respectively.  The rates $u_j,\dx_k,p_k,\dq_j$ in each interval are a pair of complementary slack basic solution of an LP (referred to as the Rates-LP) with coefficient matrix $\left[\begin{array}{cc}G & F \\ H & 0\end{array}\right]$ (replaced by $A$ in M-CLP), and each of the breakpoints is determined  by an equation of the form $x_k(t_n)=0$ or of the form $q_j(T-t_n)=0$.  It follows from this structure that the optimal solution to an SCLP is determined by an optimal sequence of adjacent (differ by a single LP pivot) bases $B_1,\ldots,B_N$ of the Rates-LP, and given this sequence the optimal solution can be computed directly.  
The SCLP algorithm constructs this optimal sequence of bases.  

The algorithm solves SCLP  parametrically, over the parametric line of time horizons moving from $0$ to $T$.  This is similar to the parametric self dual simplex algorithm, also known as Lemke's algorithm \cite{lemke:65,vanderbei:14}.
As a first step, the vectors of boundary values of the optimal solution, $x(0),\,q(0)$ are calculated, and these are valid and remain fixed for all time horizons.  The algorithm moves from 0 to $T$ in a finite bounded number of steps, at each of these steps an SCLP pivot is performed, to update the sequence of bases.  In this SCLP pivot some bases are deleted and some bases are added to the sequence of bases.

The optimal solution of M-CLP in the open interval $t\in (0,T)$ is likewise  given by continuous piecewise linear $U_j(t)$ with  piecewise constant derivatives $u_j(t)$ and continuous piecewise linear $x_k(t)$ with complementary slack duals $P_k(T-t)$ and $q_j(T-t)$, which are again determined by a sequence of adjacent bases $B_1,\ldots,B_N$ of a Rates-LP with coefficient matrix $A$.  In addition the solution  includes impulse controls at 0 and $T$.  Thus, the boundary values of optimal M-CLP/M-CLP$^*$  solutions consist of primal and dual impulse controls $U(0)$, $U(T)-U(T-)$, and $P(0)$, $P(T)-P(T-)$, and of 
the jumps in the states,  $x_k(0)$, $x_k(T)-x_k(T-)$, $q_j(0)$, $q_j(T)-q_j(T-)$ (here we define by convention $x(0-)=0,\, q(0-)=0$). 
These results were derived in  \cite{shindin-weiss:13,shindin-weiss:14}.  
The solution of M-CLP can be decomposed into a solution for an Internal-SCLP problem, solved by a sequence of adjacent  bases $B_1,\ldots,B_N$ of $[A\;I]$ (which can be solved by the algorithm of \cite{weiss:08}), and a pair of Boundary-LP problems.
One of the boundary LPs determines the  optimal primal boundary values at $0$ and $T$, the other determines the optimal dual boundary values at $0$ and $T$.  The two boundary problems are of dimension $2K\times 2J$ each.  They share the same (transposed) matrix  of coefficients, and their solutions are complementary slack but not dual.

The algorithm for M-CLP uses a more complicated parametric line $\L(\theta)$, $0\le \theta \le 1$, that moves from initial values $\beta_0,\gamma_0,\lambda_0,\mu_0,T_0$ to the final values $\beta,\gamma,0,0,T$ ($\lambda,\,\mu$ are additional vectors of boundary parameters  needed here, but they disappear in the final solution, they will be explained in Section \ref{sec.structure} and Appendix \ref{sec.lambdamu}).  The solution again moves along the parametric line in a finite number of steps, where at each step an M-CLP pivot is performed.  This M-CLP pivot may be an internal pivot on the Internal-SCLP or it may be a boundary pivot, that involves the Boundary-LPs.  

Two issues had to be resolved in the derivation of the M-CLP algorithm.
The decomposition into Boundary-LPs and Internal-SCLP is quite subtle, it will be derived in Section \ref{sec.decomp}.  
Also,
a significant difference between the SCLP and the M-CLP algorithm is that for SCLP the equations for the boundary values, for the time intervals, and for the values of the state variables at the breakpoints are decoupled, while in the M-CLP problem these equations are coupled.  These equations need to be solved  to obtain the next point of the parametric line at which a constraint becomes tight and the next iteration is performed, as explained in Section \ref{sec.algorithm_simpl}.

\subsection{Applications and extensions}
We now list some potential applications and possible extensions of M-CLP.

\subsubsection*{Solution of SCLP porblems}  By formulating SCLP as M-CLP, problems that cannot be solved by the algorithm of \cite{weiss:08} can now be solved.  

\subsubsection*{Sensitivity analysis:}
The fact that we obtain an exact solution, with a complete  detailed structure of the solution, enables us to do sensitivity analysis similar to what is done with standard LP.  For example, we can see how the solution may change if we change the time horizon, or if we move some of the time points $t_n$.  This is much harder to do if the problem is solved by discretization.

\subsubsection*{Rolling horizon and model predictive control}  In many applications of optimization over time, problems are solved for some given time horizon $T$, and are then re-solved repeatedly, for the same length of time horizon, after every $d=T/m$ time units, with updated data.  For M-CLP that would mean updated $\beta, \gamma$.  By using the simplex type algorithm for M-CLP, this can be done very efficiently, since updating requires only a fraction of the calculations required to solve the complete problem.  This efficiency is lost if one uses discretization.

\subsubsection*{Continuous time Leontief economic model}  We can formulate and solve Leontief economic model problems in continuous time, using an M-CLP formulation, if the target capacity function is given in advance (cf. \cite{shindin:16} Section 1.3).  If one wishes also to optimize the capacity this becomes a CLP of the type formulated by Bellman \cite{bellman:53} which is not an M-CLP.  

\subsubsection*{SCLP and M-CLP with piecewise constant data}  The M-CLP algorithm can  directly be generalized to solve problems with piecewise constant $b,c,\beta,\gamma$.  Solutions of such problems will have impulse controls at points at which the data changes.

\subsubsection*{Fractional continuous linear programs} 
Problems with linear constraints and an objective function that is the ratio of two linear terms are known as fractional linear programs, and can be solved by a simplex algorithm.  A continuous time version of this was studied by Wen \cite{wen:13a,wen:13b}, who analyzed their properties and suggested an approximation algorithm that included bounding the objective and discretizing time. 
It is possible to formulate such problems as M-CLP, and use our algorithm to obtain the exact solution in a finite number of steps.

\subsubsection*{Continuous linear complementarity problems} 
Linear complementarity problems (LCP) have been studied since the 60's (cf. comprehensive book of Cottle, Pang and Stone \cite{cottle-pang-stone:92}).  While general  LCP are NP-hard, several types of LCP's can be solved by a simplex type algorithm developed by Lemke and Howson  \cite{lemke:62,lemke:65,lemke-howson:64}.  These include convex quadratic programming problems, and bi-matrix games.  A continuous  time version of LCP can be formulated, and  a simplex type algorithm that is similar to our M-CLP algorithm can be used to solve these problems.

\subsubsection*{More general right hand side and objective functions}  Pullan \cite{pullan:95} has shown that if the right hand side and objective functions are piecewise analytic, and the feasible region is bounded, SCLP has a piecewise analytic optimal solution.  We hope to obtain similar results for M-CLP, and extend the algorithm to such problems.  It is reasonable to conjecture that optimal solutions will again consist of a  finite partition of the time horizon, where each interval is associated with a basis of $[A\; I]$, and the control rates and rates of the states are analytic functions in each interval.  It seems that one can then extend our algorithm to such r.h.s. and objective, with the added difficulty that along the parametric line, constraints become  tight not just at the breakpoints, but also at internal points of  intervals, at times at which the derivative of a state variable is 0 (i.e. at local minima).

\subsection{Outline or the rest of the paper}
The rest of the paper is constructed as follows:  In Section \ref{sec.example} we give an example of an M-CLP problem solved by our algorithm.  It is the smallest possible example, but was chosen to have enough iterations to illustrate most features of the algorithm. This should provide some intuition,  before we introduce the actual algorithm in Section \ref{sec.algorithm_simpl}.   We return to this example in Section  \ref{sec.example2}.

 In the next part of the paper we extend some of the results of \cite{shindin-weiss:14}:  we give a succinct description of the solution (Section \ref{sec.structure}),  then describe a decomposition of M-CLP and its dual to an Internal-SCLP problem and a pair of Boundary-LP problems (Section \ref{sec.decomp}), and follow with a discussion of validity regions (Section \ref{sec.region}).  

To construct the algorithm, we discuss in Section  \ref{sec.collisions} what happens on the boundaries of validity regions.  When $\L(\theta)$ approaches the boundary of a validity region, at the boundary, some of the values of the impulse controls or the state variables or the interval lengths, which are positive in the interior of the validity region, reach 0.  As a result,  the set of positive boundary values and  sequence of internal bases is no longer  optimal outside the boundary.  We call this a collision, and  we classify all possible types of collisions.   

In Section \ref{sec.pivot}  we discuss M-CLP pivots.  
We consider two neighboring  validity regions $\V,\W$, and given the type of collision as $\L(\theta)$ approaches the boundary of $\V$, we show how to identify the collision as   $\L(\theta)$ approaches the point of collision from the other direction, i.e. from within $\W$.  This enables us to identify the set of positive boundary values and the  sequence of internal bases that are optimal in $\W$, which completes the pivot.

Based on the pivot operation we then describe the algorithm in Section \ref{sec.algorithm_simpl}, where we show how to determine the collision points and perform the sequence of pivots.       In Section \ref{sec.example2} we return to the example of Section \ref{sec.example}, and re-interpret it in terms of the algorithm. 

The algorithm of Section \ref{sec.algorithm_simpl} requires us to make some strong simplifying assumptions in order for it to function.  However, in all cases these assumptions can be enforced by perturbation of the parametric line.  We outline the necessary perturbation procedures in Appendix \ref{sec.general}.  

For ease of reading and continuity we moved some of the proofs of Sections \ref{sec.region}, \ref{sec.pivot} and \ref{sec.algorithm_simpl} to  Appendix \ref{sec.validityappendix}, \ref{sec.uniqueproof}, \ref{sec.alg-line} respectively.



{\em Notes on presentation:} Throughout the paper we quote results from \cite{shindin-weiss:13,shindin-weiss:14}.  We will refer to results from the duality paper \cite{shindin-weiss:13} by appending D and from the structure paper \cite{shindin-weiss:14} by appending S (e.g. D5.3, or (S4.2)).
There is complete symmetry between M-CLP and M-CLP$^*$.  Therefore when we formulate results for one or both of them, we will give a proof only for one of them.


\section{An Illustrative  Example}
\label{sec.example}
We now apply the algorithm to solve the following problem:
\begin{eqnarray}
\label{eqn.example}
&\max & \;   \int_{0-}^3 \left(\big[-4; -2\big] + (3-t)\big[3; 2\big] \right)
\left[ \begin{array}{c} dU_1(t) \\ dU_2(t) \end{array} \right] 
 \nonumber\\
& \mbox{s.t.} & 
\left[ \begin{array}{cc} 2 & 1 \\ 1 & 1  \end{array} \right] 
\left[ \begin{array}{c} U_1(t) \\ U_2(t) \end{array} \right]
+ \left[ \begin{array}{c} x_1(t) \\ x_2(t) \end{array} \right]
= \left[ \begin{array}{c} 4\\ 1 \end{array} \right]
 + \left[ \begin{array}{c} 4\\ 3 \end{array} \right]  t, 
 \\
 && \; U(t) \ge 0 \mbox{ non-decreasing, }\; x(t)\ge 0,\quad 0\le t \le 3.\nonumber 
\end{eqnarray}


As can be seen by comparing with (\ref{eqn.mpclp}),  the data of this problem is:
\begin{equation}
\label{eqn.exampledata}
\begin{array}{clll}  
\gamma = [ -4; -2], \\
c  =  [ 3; 2], \\
A  =  \left[ \begin{array}{cc} 2 & 1 \\ 1 & 1  \end{array} \right],  &
b  = \left[ \begin{array}{c} 4\\ 3 \end{array} \right], &
\beta =   \left[ \begin{array}{c} 4\\ 1 \end{array} \right], &  T=3.
\end{array}
\end{equation}

Figure \ref{fig.p2full} gives a graphical representation of the complete solution of this problem, as well as of its dual.   
The challenge in presenting this solution in a graph is that we wish to show 8 different function variables in one picture, in addition to impulse controls at 0 and $T$.  We also wish to distinguish the different types of variables and  illustrate the duality.
To do so we use the horizontal axis of length $T$ to present primal time from left to right and dual time in the reversed direction from right to left.  We present primal states with the complementary slack dual control rates in the upper quadrant, and the dual states with the complementary slack primal control rates  in the lower quadrant.  To be precise, we plot all the the non-zero values of:  $x_k(t)$, $p_k(T-t)$, 
$-q_j(T-t)$, and $-u_j(t)$.  
We do not plot 0 values, and because basic solutions of the $2\times 2$ Rates-LP have only 2 non-zero primal values and   2 non-zero dual values,  we only need to plot two positive and two negative values for each $0<t<T$. 
It is easy to distinguish between state variables ($x_k,\,q_j$) which are continuous piecewise linear, and control rates (derivatives of the cumulative controls, $u_j,\,p_k$) which are piecewise constant.

The impulse controls are shown as vertical arrows, next to the origin and next to the horizontal coordinate $T$.  
Next to the origin we plot  $-U_j(0)$ and  $P_k(T)-P_k(T-)$.  Next to  $T$ we plot $P_k(0)$ and 
$-(U_j(T)-U_j(T-)$.  Circles are used at the origin for  jumps in $q_j(T)$, and at $T$ for jumps in $x_k(T)$. 

\begin{figure}[h!t] 
\centering 
\includegraphics[width=3.3in]{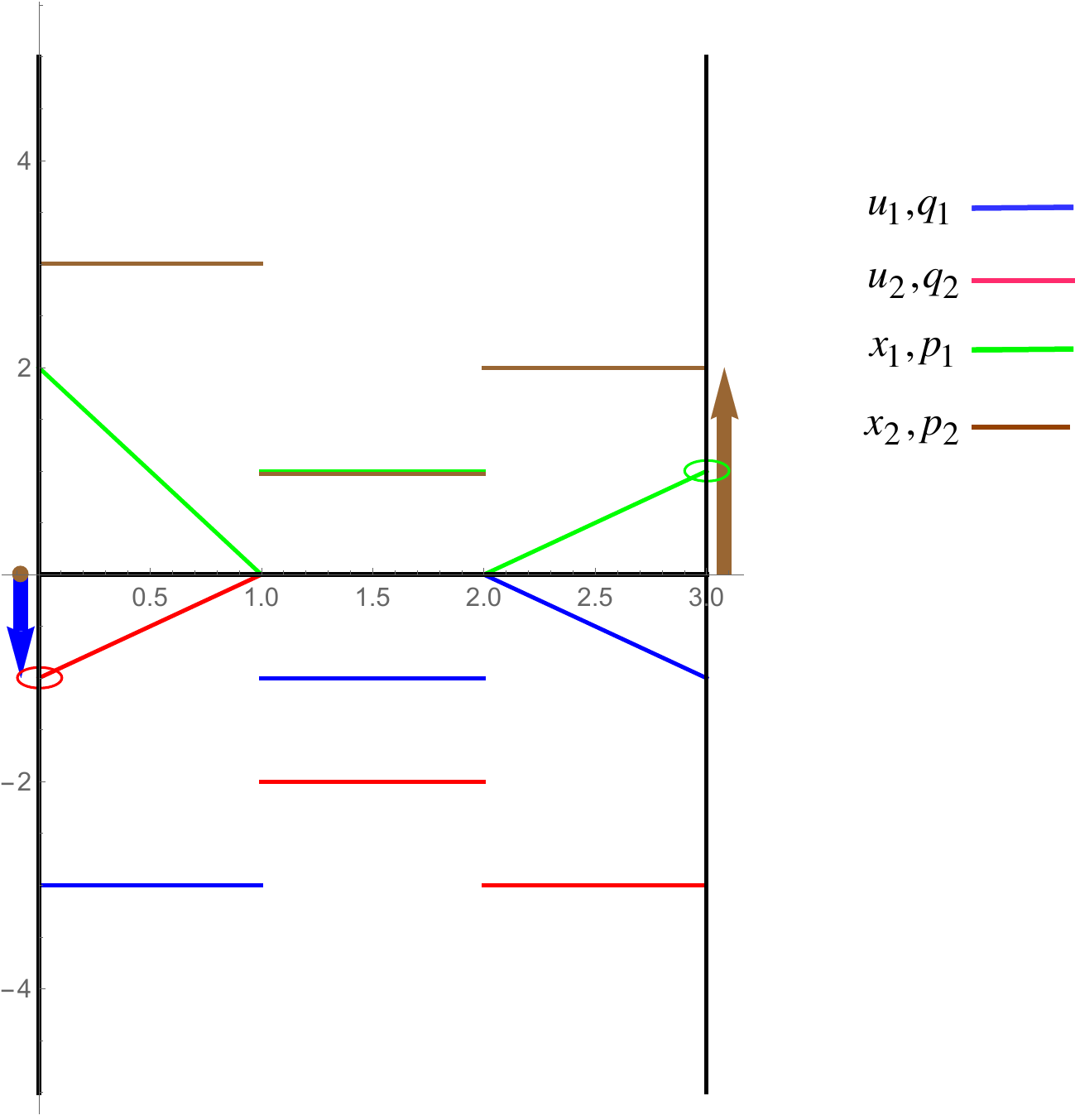} 
\caption{Full Solution of the example problem.}
\label{fig.p2full}
\end{figure}

We use different colors to refer to the primal and dual states, and for each state we use the same color also  for the complementary control.  We use the same colors in all the following figures.

In this figure we can see that the solution consists of 3 intervals, and a single impulse control at 0 for the primal and another at 0 for the dual (in the reversed time). Circles denote the values of the state variables at $T$, which might be different from the  limiting  values at $T{-}$.

To apply the algorithm we will solve the problem along a parametric line, that will move along the {\em line of boundary parameters} $\L(\theta)$ from $\theta=0$ to $\theta=1$, where 
\begin{equation}
\label{eqn.exampleline}
\begin{array}{ccl}
\L(0) =& (\beta_0,\gamma_0,\lambda_0,\mu_0,T_0) &= \left(\left[ \begin{array}{c} 2\\ 1 \end{array} \right], \left[ \begin{array}{c} -5\\ -5 \end{array} \right],\left[ \begin{array}{c} -1\\ -1 \end{array} \right],\left[ \begin{array}{c} 1\\ 1 \end{array} \right],1\right),  \\
\L(1) =& (\beta,\gamma,\lambda,\mu,T) &= \left(\left[ \begin{array}{c} 4\\ 1 \end{array} \right],
\left[ \begin{array}{c} 1\\ 2 \end{array} \right],\left[ \begin{array}{c} 0\\ 0 \end{array} \right],\left[ \begin{array}{c} 0\\ 0 \end{array} \right],3\right),
\end{array}
\end{equation}
The artificial boundary parameters $\lambda,\mu$ are used to avoid degeneracies along $\L(\theta)$, and disappear at $\theta=1$.  They will be explained in Appendix \ref{sec.lambdamu}.

The algorithm will solve this problem in 7 iterations, the problem was deliberately chosen to have many iterations, so as to demonstrate various features of the algorithm.  The iterations happen at 
$\theta$ values $0,\frac{1}{11},\frac{1}{6},\frac{2}{9},\frac{4}{9},\frac{5}{9},\frac{13}{19},1$.
In each of the intervals between these $\theta$s, the optimal solution maintains the same set of positive boundary values, the same number of breakpoints in the solution, and the same rates of controls and slopes of the states between breakpoints.  

As $\theta$ changes, the solution changes continuously in an affine way, until a boundary constraint or an internal constraint becomes tight.  At this $\theta$ the edge of the validity region is reached and the pivot of the iteration happens.
If  a boundary value shrinks to zero then we perform a boundary pivot. Otherwise, if some intervals shrink to zero, or some primal or dual state variable, at a breakpoint, shrinks to zero, we perform an Internal-SCLP pivot.  We call this an M-CLP pivot.  

The initial values $\L(0)$ are chosen so that the solution has a single interval, with 0 controls and states equal to the right hand sides: $x(t) = \beta(\theta)+b t$, $q(t) = -\big( \gamma(\theta) + c t\big)$, $0<t<T(\theta)$, for small $\theta$.  

In the following paragraphs we present illustrations of the optimal solution at each iteration, 
showing the solution at the boundary and the middle of the interval of $\theta$s, and we briefly explain what happens at the pivot.  We return to this example in Section \ref{sec.example2}, with additional details, after the algorithm is derived.

{\bf Iteration 1:}  Starting at $\theta =0$ the solution is valid until $\theta =1/11$, when $q_1(T)$ shrinks to 0  (at the left side of the figure, recall dual time runs right to left). 
\begin{figure}[h!t] 
\centering 
\includegraphics[width=4.5in]{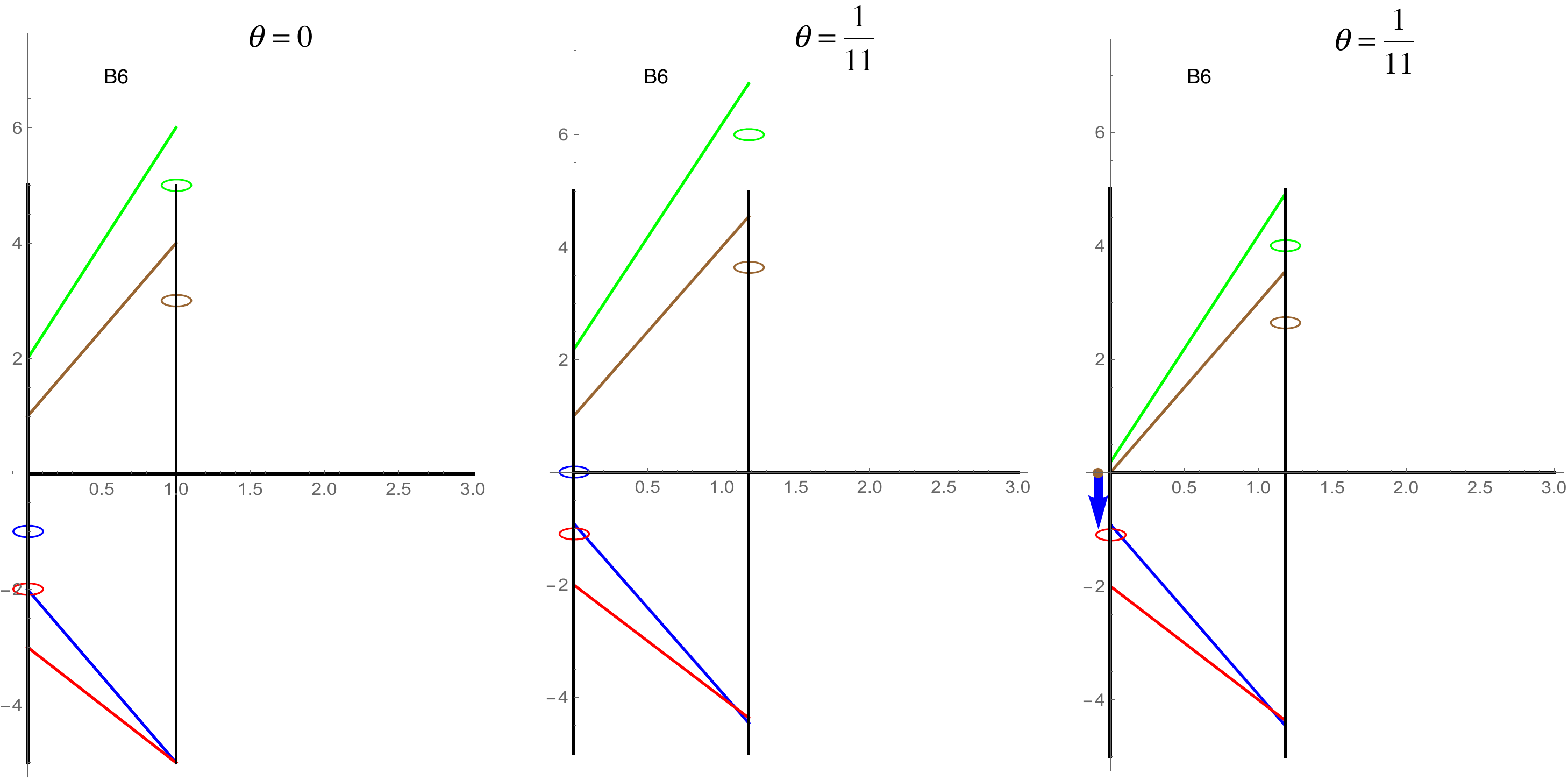} 
\caption{Example problem, Iteration 1, including a boundary pivot.}
\label{fig.p2iteration1}
\end{figure}
A boundary pivot is performed and as a result a primal  impulse control $U_1(0)$ at time 0 is introduced, and the boundary state $x_2(0)$ jumps down to 0.  The boundary pivot also introduces a dual impulse control $P(T)-P(T-)$ that is still 0 but will increase in the next validity region.

{\bf Iteration 2:}  Starting at $\theta =1/11$ the solution is valid until $\theta =1/6$, when $q_1(T-)$ shrinks to zero.  In the figure we show the solution also at $\theta =1/8$.  Note that the impulse control $P(T)-P(T-)$ at the left side of the figure is now positive and growing.  
\begin{figure}[h!t] 
\centering
\includegraphics[width=4.5in]{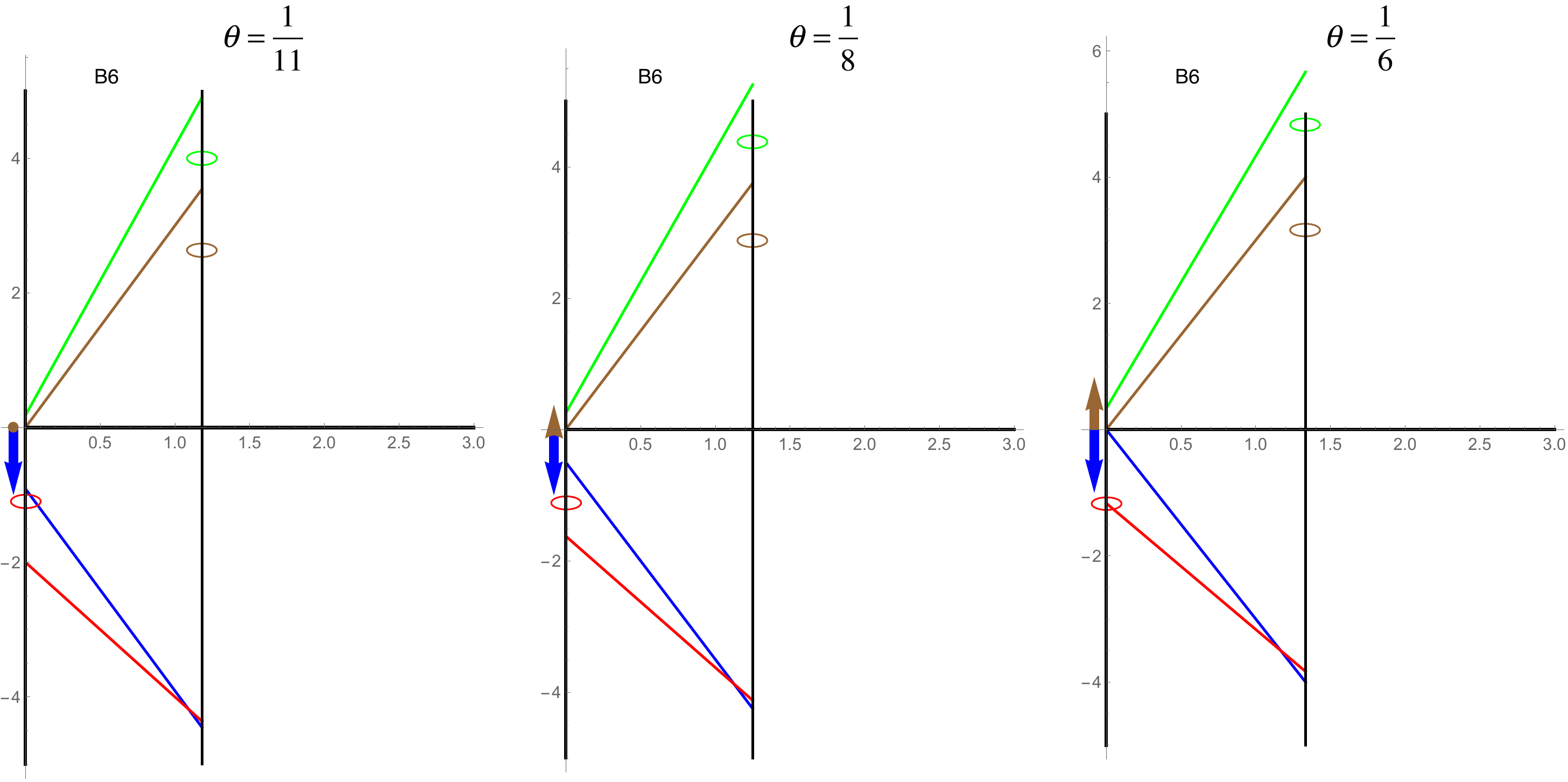}
\caption{Example problem, Iteration 2.}
\label{fig.p2iteration2} 
\end{figure}
An Internal-SCLP pivot is performed, in which a new interval is inserted at primal time 0 (dual time $T$), where the state variable $q_1$  shrunk to 0.   Currently the new interval has length 0.  It will increase with $\theta$.

{\bf Iteration 3:} Starting at $\theta = 1/6$ the solution now consists of two intervals, $0<t_1<T$.  At $\theta = 2/9$ the value of $x_1(t_1)$  shrinks to 0. This happens at $t_1=2/9$, when the local minimum of $x_1(t)$ descends to 0.  We also show the solution at $\theta=7/36$, where the local minimum of $x_1(t)$ is still $>0$.   Note that the new interval has increased in length, notice also that there are  positive primal and dual control rates in this interval.
\begin{figure}[h!t] 
\centering 
\includegraphics[width=4.5in]{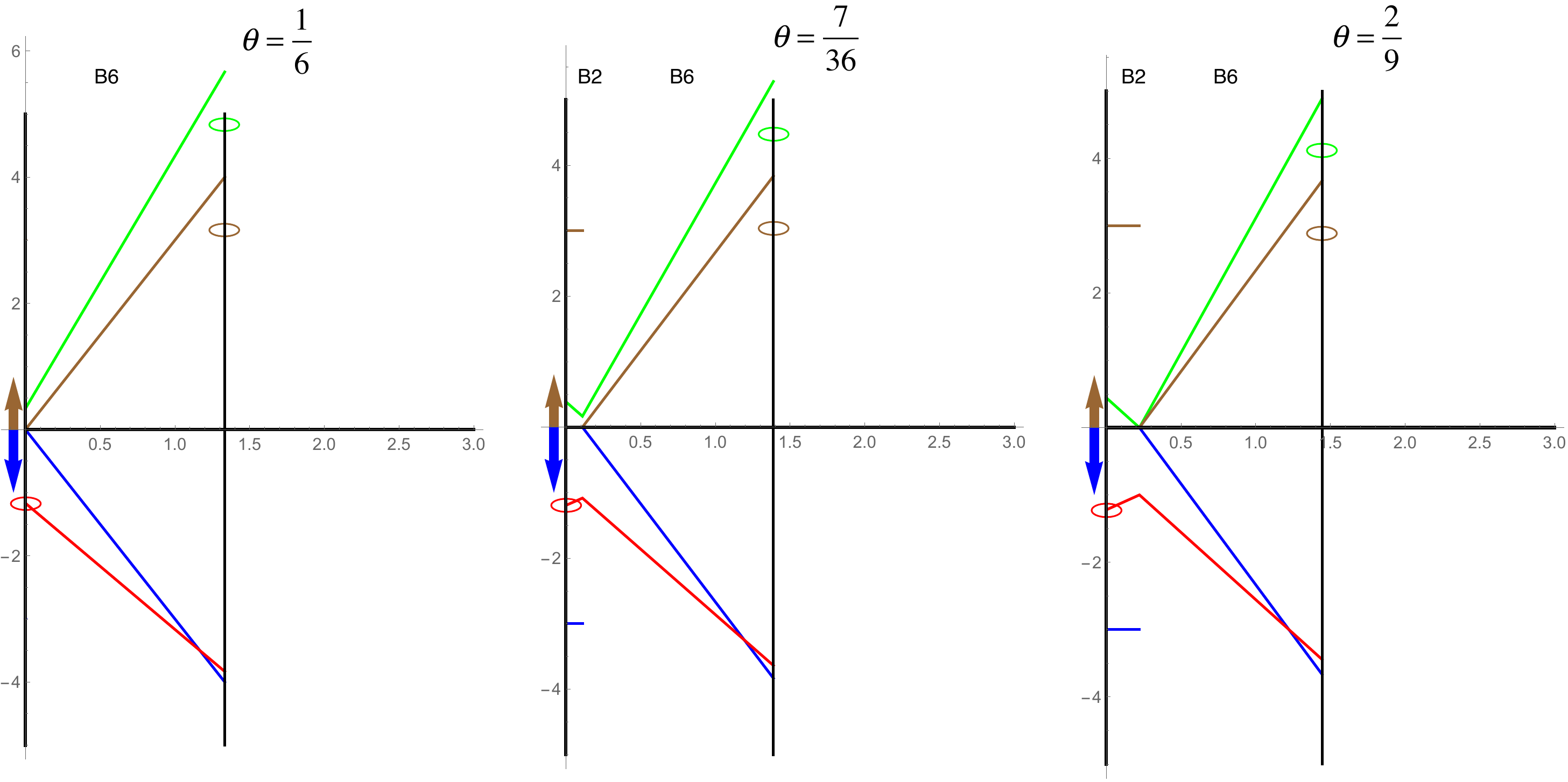} 
\caption{Example problem, Iteration 3.}
\label{fig.p2iteration3}
\end{figure}
An Internal-SCLP pivot is performed, and a new interval is inserted at $t_1$.  Currently the new interval has length 0.

{\bf Iteration 4:} Starting at $\theta = 2/9$ the solution now consists of three intervals, $0<t_1<t_2<T$.  At $\theta = 4/9$ the value of $q_2(T-t_1)$ shrinks to 0.  We also show $\theta = 3/9$.
\begin{figure}[h!t]
\centering 
\includegraphics[width=4.5in]{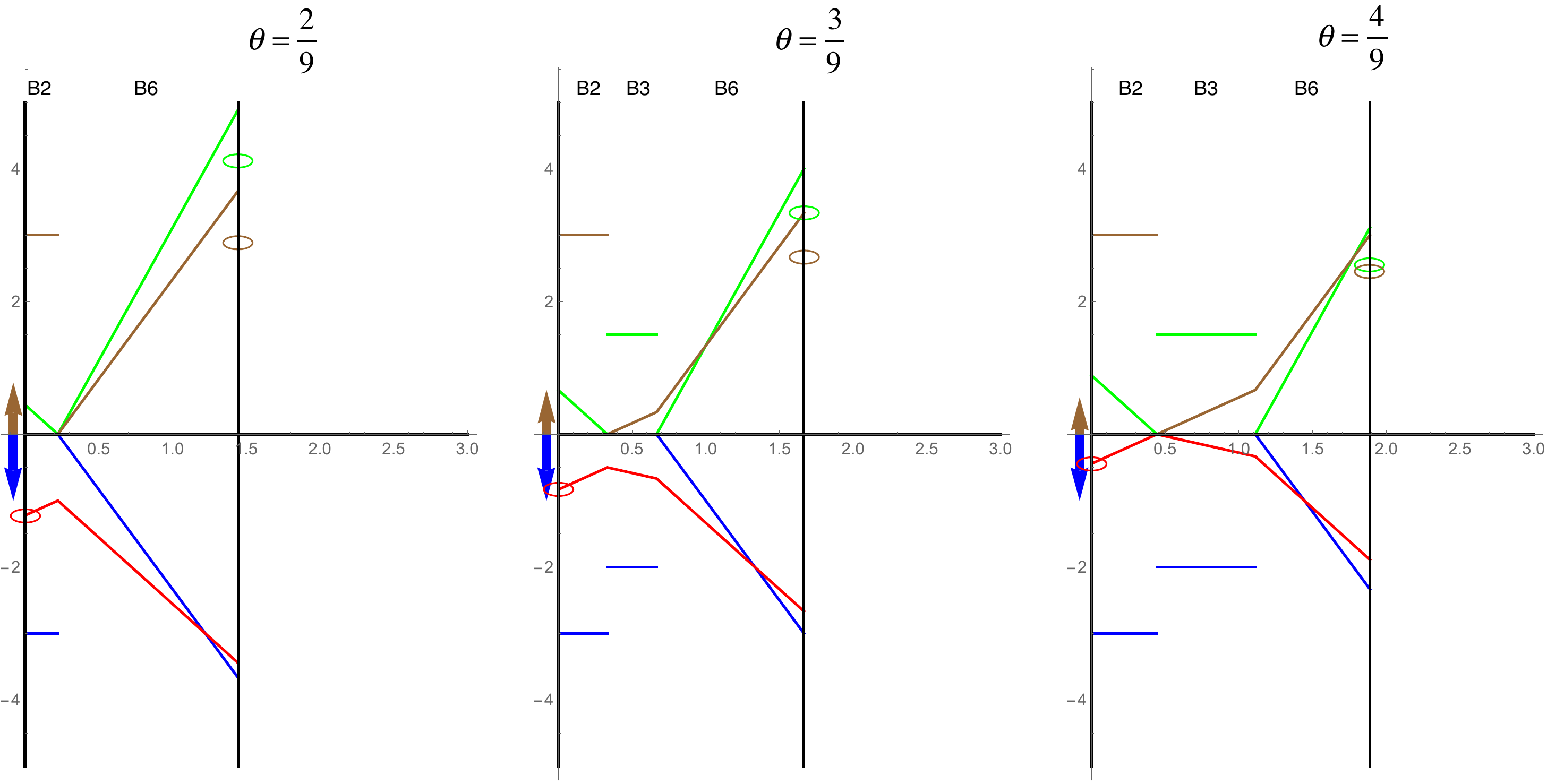}
\caption{Example problem, Iteration 4.}
\label{fig.p2iteration4}
\end{figure}
An Internal-SCLP pivot is performed, and another new interval is inserted at $t_1$.  It is currently of length 0.

{\bf Iteration 5:} Starting at $\theta = 4/9$ the solution now consists of four intervals, $0<t_1<t_2<t_3<T$.  At $\theta = 5/9$ the interval between $t_2$ and $t_3$ shrinks to 0.   We also show $\theta = 1/2$.  Note that in the interval $(t_1,t_2)$ all the states are zero, and all four control rates are positive.
\begin{figure}[h!t] 
\centering
\includegraphics[width=4.5in]{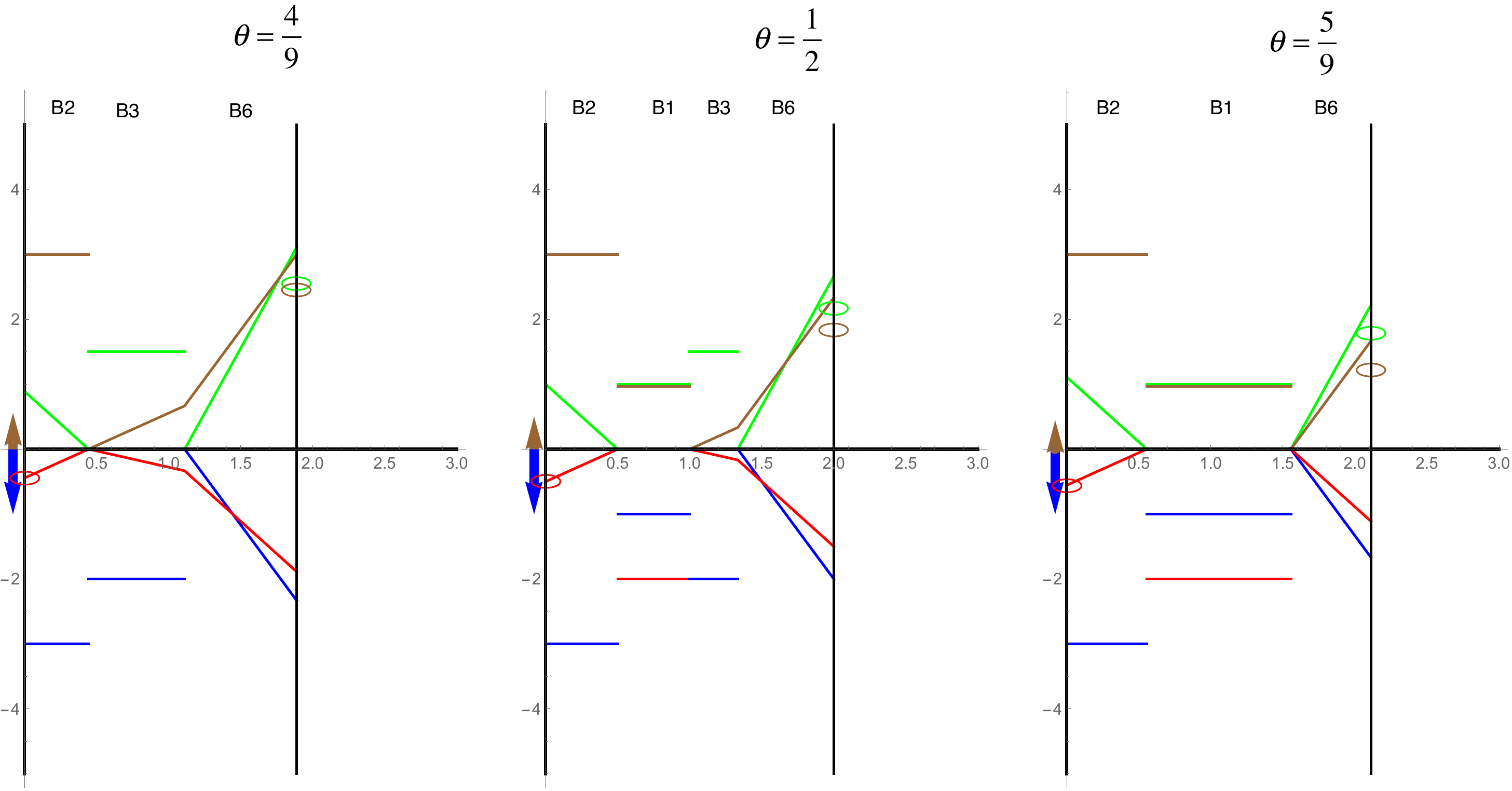}
\caption{Example problem, Iteration 5.}
\label{fig.p2iteration5}
\end{figure}
An Internal-SCLP pivot is performed, and the positive variables  in the 3rd interval are changed.  
What happened was that for $4/9<\theta<5/9$, at $t_2$ we had $x_2(t_2)=q_2(T-t_2)=0$, and  at $t_3$,
$x_1(t_3)=q_1(T-t_3)=0$.  As $\theta \to 5/9$, $t_2$ and $t_3$ moved toward each other until they met.  After the pivot, with $\theta > 5/9$ they continued to move in the same directions, so now the roles of $t_2,t_3$ have switched, and we will have for $\theta > 5/9$ that $x_2(t_3)=q_2(T-t_3)=0$ and 
$x_1(t_2)=q_1(T-t_2)=0$.
So in this pivot, the interval $(t_2,t_3)$ shrunk to zero, and after the pivot it will increase again, but with different positive state variables.

{\bf Iteration 6:} Starting at $\theta = 5/9$ the solution now consists of four intervals, $0<t_1<t_2<t_3<T$, where the interval $(t_2,t_3)$ now has a different basic solution of the Rates-LP.  At $\theta = 13/19$ 
the boundary value of $x_2(T)$ shrinks to 0.  
\begin{figure}[h!t]
\centering
\includegraphics[width=4.5in]{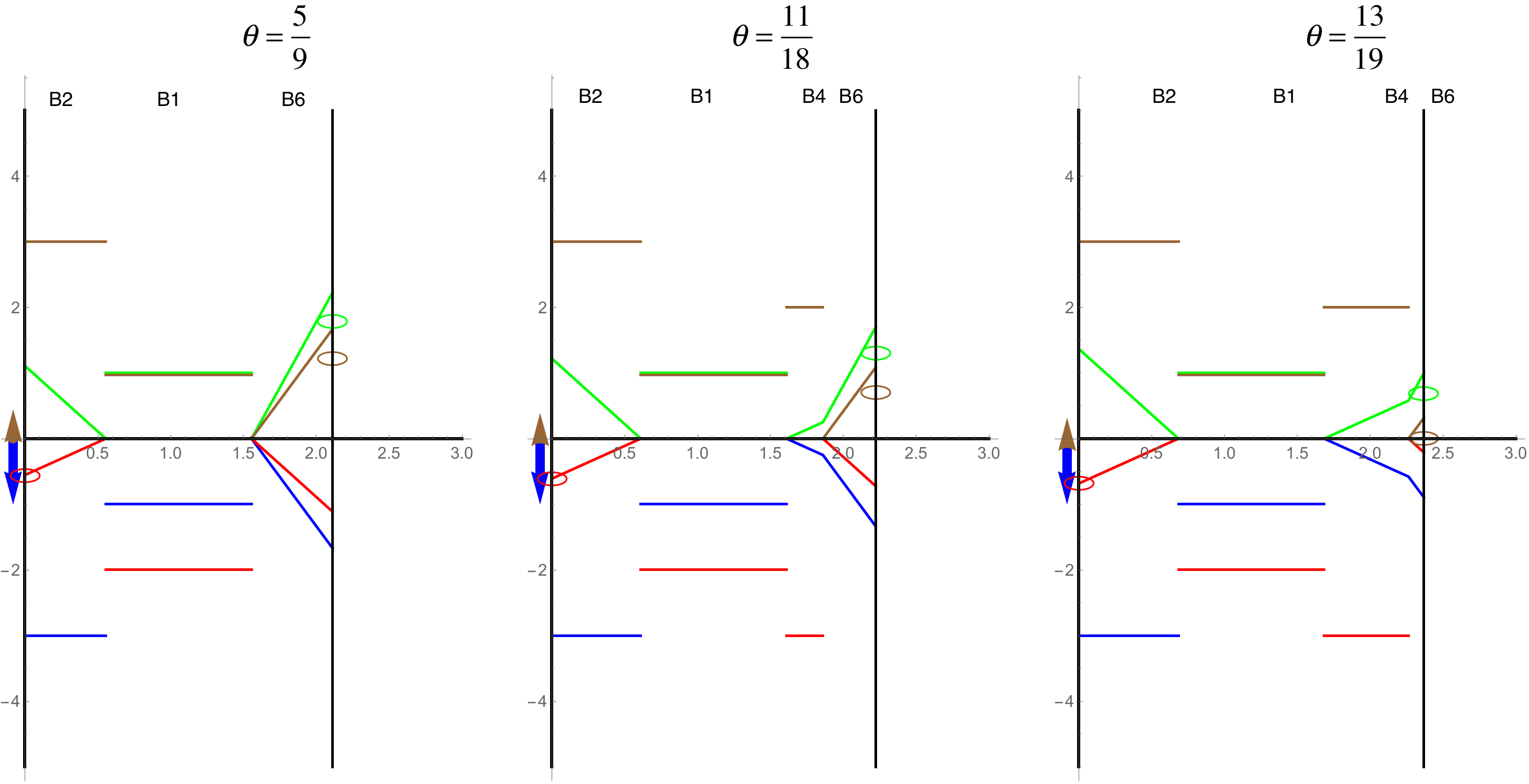}
\caption{Example problem, Iteration 6.}
\label{fig.p2iteration6} 
\end{figure}
A boundary pivot is performed.  A new impulse control $P_2(0)$ is introduced at dual time 0 (primal time $T$, i.e. at the right end of the illustration).  This impulse control is currently still 0. 

{\bf Iteration 7:} Starting at $\theta = 13/19$ the solution still consists of four intervals.  At $\theta =1$, the interval between $t_3$ and $t_4$ shrinks to 0, and only three intervals remain.  Also the impulse control $P_2(T)-P(T-)$, at the left end of the illustration, shrinks to 0.  
\begin{figure}[h!t]
\centering
\includegraphics[width=4.5in]{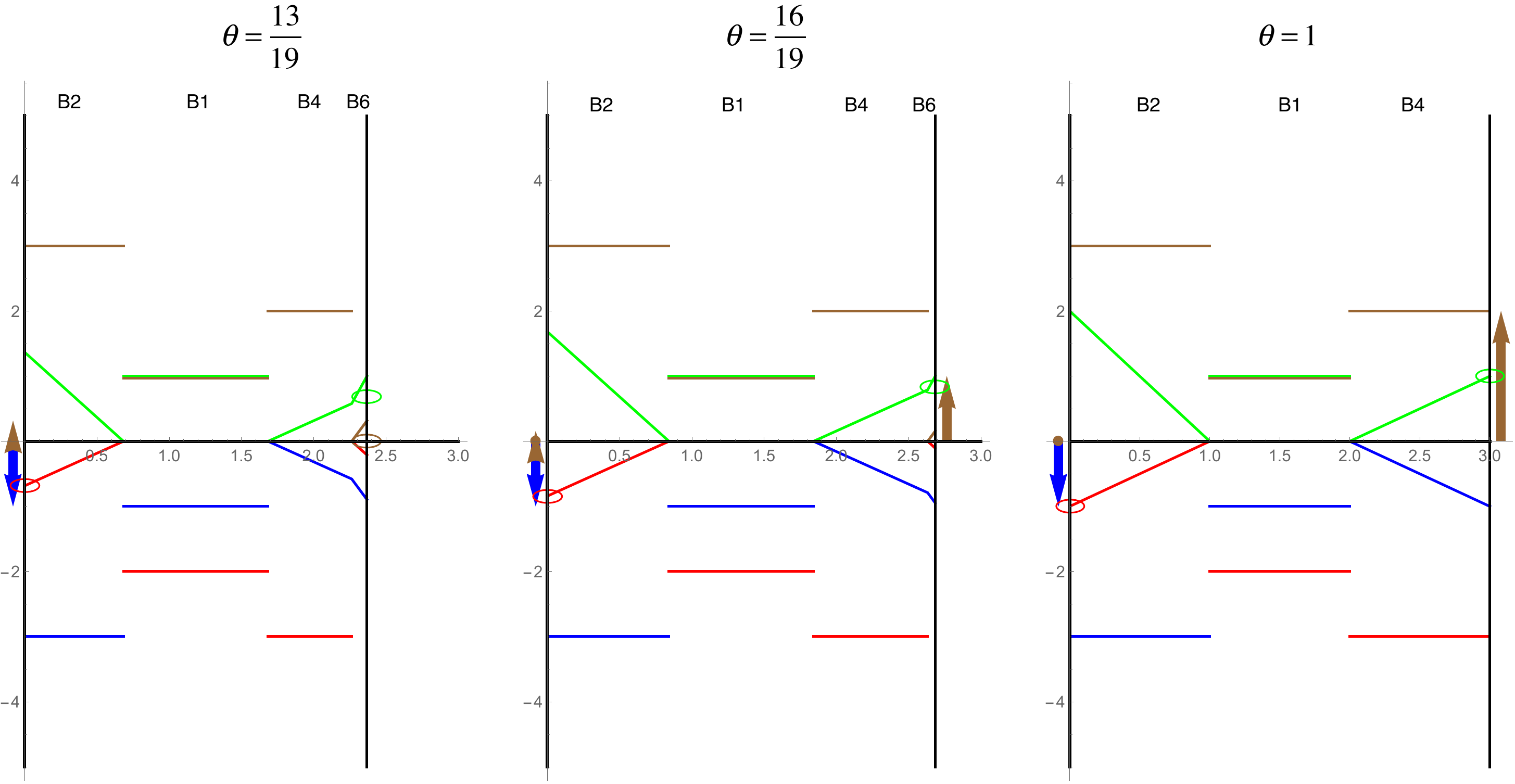}
\caption{Example problem, Iteration 7.}
\label{fig.p2iteration7}
\end{figure}
The result at $\theta=1$ is the optimal solution to our problem, as shown previously in Figure \ref{fig.p2full}.


\section{Structure of the Solution}
\label{sec.structure}
All the results in this section follow  directly from \cite{shindin-weiss:13,shindin-weiss:14}, and Appendix \ref{sec.lambdamu}.  We therefore state them here briefly, and the reader is advised that the notation and terms presented here will be used for the rest of the paper.

We  formulate a slightly  more general  M-CLP problem and its dual, which our algorithm will solve:
\begin{eqnarray}
\label{eqn.gmpclp}
&\max & \; \mu\Tt U(0) + \int_{0-}^T (\gamma+ (T-t)c)\Tt dU(t) \nonumber\\
\mbox{M-CLP}\quad & \mbox{s.t.} & \qquad A\, U(t) + x(t) \quad = \beta + b t, 
\quad 0 \le t < T, \\
&&  \qquad A\, U(T) + x(T) \quad = \beta + bT + \lambda, \nonumber \\
 &&   U(t)\ge0  \mbox{ non-decreasing  right continuous,}\; x(t)\ge 0,\;t\in [0,T],\nonumber 
\end{eqnarray}
\begin{eqnarray}
\label{eqn.gmdclp}
&\min & \; \lambda\Tt P(0) + \int_{0-}^T (\beta+(T-t)b)\Tt dP(t)  \nonumber \\
\mbox{M-CLP$^*$}\quad  & \mbox{s.t.} & \qquad  A\Tt P(t) - q(t) \quad = \gamma + c t,  
\quad 0 \le t  < T, \\
&& \qquad A\Tt P(T) - q(T) \quad = \gamma + cT + \mu, \nonumber\\
 &&   P(t)\ge 0 \mbox{ non-decreasing right continuous,}\; q(t)\ge 0,\;t\in [0,T].\nonumber 
\end{eqnarray}
The new elements in this re-formulation are  two additional vectors of boundary parameters: $\lambda\le 0,\,\mu\ge 0$.
The introduction of $\lambda,\,\mu$  here is done to avoid some inherent degeneracies that can inhibit our algorithm.  We discuss the motivation and explain the sign restrictions on $\lambda,\mu$, and we prove that all the results of  \cite{shindin-weiss:13,shindin-weiss:14}  extend directly to the current formulation, in Appendix \ref{sec.lambdamu}.

We  denote the complete set of {\em boundary parameters} by 
$\rho=(\beta, \gamma, \lambda,\mu,T)$.
We will often make the following non-degeneracy assumption:
\begin{assumption}[Non-Degeneracy]
\label{asm.nondeg}
The vector $b$ is in general position to the matrix $\left[ A\; I  \right]$ (i.e. it is not a linear combination of any less than $K$ columns), and the vector $c$ is in general position to the matrix $\left[ A\Tt \; I  \right]$.
\end{assumption}
Optimal solutions of (\ref{eqn.gmpclp}), (\ref{eqn.gmdclp}) are strongly dual and satisfy the following {\em complementary slackness condition}:
\begin{equation}
\label{eqn.compslack}
 \int_{0-}^T x(T-t)\Tt dP(t) =  \int_{0-}^T q(T-t)\Tt dU(t) = 0,
\end{equation}
Based on ideas of Wang, Zhang and Yao \cite{wang-zhang-yao:09}, and similar to Theorem S3.1, a necessary and sufficient condition that (\ref{eqn.gmpclp}) be feasible is that the following {\em Test-LP} is feasible
  \begin{eqnarray}
\label{eqn.ptestLP-ex}
 &\max &z = (\gamma + c T +\mu)\Tt   \bu + (\gamma+ cT)\Tt U  \nonumber  \hspace{1.0in} \\
    &\mbox{s.t.} &  A  \bu  \le \beta,   \\
  \mbox{Test-LP} &&  A  \bu + A U  \le \beta + bT + \lambda, \nonumber \\
&& \quad \bu,\, U \ge 0.  \nonumber 
\end{eqnarray}
An analogous condition applies for M-CLP$^*$ (\ref{eqn.gmdclp}).
Feasibility of (\ref{eqn.gmpclp}), (\ref{eqn.gmdclp}) implies existence of optimal complementary slack solutions with equal objective values.

Under the Non-Degeneracy Assumption \ref{asm.nondeg} optimal solutions $U(t),P(t)$ have impulse controls  $\bu^0=U(0),\,\bu^N=U(T)-U(T-),\,\bp^0=P(T)-P(T-),\,\bp^N=P(0)$ at $0$ and $T$,   piecewise constant control rates $u(t)=\frac{dU(t)}{dt},\,p(t)=\frac{dP(t)}{dt}$ at $0<t<T$, and continuous piecewise linear states $x(t)=\beta+bt-U(t),\,q(t)=\gamma+ct-P(t)$ at $0 \le t<T$, with possible discontinuities at $T$.  

The following list adds to the description of optimal solutions.
\begin{compactitem}[-]
\item
The time horizon $[0,T]$ is partitioned by $0=t_0 < t_1 < \ldots < t_N=T$ which are the breakpoints in the rates $u,p$  and in the slopes of $x,q$.  
\item
We denote the values of interval lengths by $\tau_n=t_n-t_{n-1},\,n=1,\ldots,N$.
\item
We denote the vectors of values of the states at the breakpoints by   $x^n = x(t_n),\, n=0,\ldots,N-1$,
and $q^n=q(T-t_n)$, $n=1,\ldots,N$.  
\item
Because there may be a discontinuity at $T$ we denote the values at $T$ itself by $\bx^N=x(T)$, $\bq^0=q(T)$, and let $x^N=x(T-)$, $q^0=q(T-)$ be the values of the limits as $t\nearrow T$. 
\item
The constant slopes of the states and the constant values of the control rates for each interval are denoted $\dx^n=\frac{d x(t)}{dt},\,u^n=u(t),\, t_{n-1}<t<t_n$ and  $\dq^n=\frac{d q(t)}{dt},\,p^n=p(t),\, T-t_n<t<T-t_{n-1}$.  
\end{compactitem}

The entire solution, $U(t),P(t),x(t),q(t)$ can be retrieved from  the vectors of boundary values $\bu^0,\bu^N,\bp^0,\bp^N$, $x^0,\bx^N,\bq^0,q^N$, the vectors of rates $u^n,\dx^n,p^n,\dq^n,$ $n=1,\ldots,N$, and and the interval lengths $\tau_n\,n=1,\ldots,N$.

We now continue to describe properties of optimal solutions.
The rates $u^n,\dx^n,p^n,\dq^n$ are complementary slack basic solutions of the following dual pairs of Rates-LP problems
\begin{equation}
\label{eqn.prates}
\begin{array}{rcl}
 & \max &  c\Tt u  \\
 &\mbox{s.t.}& A u +  \dx = b \\
\mbox{Rates-LP$(\K_n,\J_n)$} \quad&& u_j \in \setZ \text{ for } j \in \J_n,\; u_j \in \setP \text{ for } j \notin \J_n \\
&& \dx_k \in \setU \text{ for } k \in \K_n, \; \dx_k \in \setP \text{ for } k \notin \K_n 
\end{array}
\end{equation}
\begin{equation}
\label{eqn.drates}
\begin{array}{rcl}
 & \min &  b\Tt p  \\
&\mbox{s.t.}& A\Tt p -  \dq = c \\
\mbox{Rates-LP$^*(\K_n,\J_n)$}\quad && p_k \in \setZ \text{ for } k \in \K_n,\; p_k \in \setP \text{ for } k \notin \K_n \\
&& \dq_j \in \setU \text{ for } j \in \J_n, \; \dq_j \in \setP \text{ for } j \notin \J_n.
\end{array}
\end{equation}
Here $\setP,\setZ,\setU$ refer to sign restrictions on the variables:  $\setP$ means non-negative, $\setU$ means unrestricted, $\setZ$ means restricted to be 0.
For $n=1,\ldots,N$, $B_1,\ldots,B_N$ denote the optimal bases, and $\K_n,\J_n$ are the indexes of the basic $\dx_k^n,\dq_j^n$.  By Assumption \ref{asm.nondeg} all the bases are non-degenerate.  
We denote by $B^*_n$ the dual basis complementary to $B_n$.
\begin{definition}
We say that a base sequence $\{\K_n,\J_n\}_{n=0}^{N+1}$ is {\em a proper base sequence} if it satisfies:
\begin{compactitem}
\item
The bases $B_1,\ldots,B_N$ are {\em admissible} in the sense that  $u^n,p^n\ge 0$.
\item
For each $n = 1,\dots, N-1$ bases $B_n, B_{n+1}$ are  {\em adjacent},  in the sense that $B_n \to B_{n+1}$ involves a single pivot, with $v_n$  leaving the basis, and $w_n$ entering.
\item
The indexes of the non-zero boundary values of the primal and dual state variables  at times 0 and $T$ denoted by  $\{\K_n,\J_n\}_{n\in\{0,N+1\}}$ satisfy a {\em compatibility condition} that $\K_0 \subseteq \K_1,\,\J_{N+1} \subseteq \J_N$, i.e. if $x_k(0)>0$ then $\dx_k^1\ne 0$, and if $q_j(0)>0$ then $\dq_j^N\ne 0$.
\end{compactitem}
\end{definition} 

Given any proper base sequence, we can attempt to construct an optimal solution  by solving the Rates-LP/LP$^*$ (\ref{eqn.prates}), (\ref{eqn.drates}) for the bases $B_n, \,n=1,\ldots,N$ and then using these values to formulate and solve the following coupled linear equations, which determine the interval lengths and the boundary values.

{\em The time interval equations}: 
 \begin{eqnarray}
\label{eqn.lineq1}
&\displaystyle x_k(t_n)=x^0_k + \sum_{m=1}^n  \dx^m_k \tau_m = 0, &\mbox{ if } v_n = \dx_k, 
\nonumber \\
&\displaystyle q_j(T-t_n)=q^N_j + \sum_{m={n+1}}^N  \dq^m_j \tau_m = 0, &\mbox{ if } v_n = u_j,
\end{eqnarray}
\begin{equation}
\label{eqn.sumtau}
\sum_{n=1}^N \tau_n = T.
\end{equation}

{\em The complementary slackness conditions}:
\begin{equation}
\label{eqn.compatible}
\begin{array}{lllll}
\bu^0_j=0, &  j\in \J_0  &                                 \qquad \qquad \bp^N_k=0,  &  k\in \K_{N+1},  \\
x^0_k=0, &   k \not\in \K_0, &                          \qquad \qquad   q^N_j = 0, &  j\not\in \J_{N+1},  \\
\bp^0_k=0,  &  k\in \K_{0}, &  			 \qquad \qquad \bu^N_j=0, & j\in \J_{N+1}, \\
\bq^0_j = 0, & j\not\in \J_0, &			 \qquad \qquad \bx^N_k= 0, & k\not\in \K_{N+1}.
\end{array}
\end{equation}

{\em The first boundary equations}:
 \begin{equation}
\label{eqn.boundary1}
\begin{array}{llll}
A \bu^0  &+ x^0 = \beta, \quad  & \quad   A\Tt \bp^N  &- q^N = \gamma. 
\end{array}
\end{equation}

{\em The second boundary equations}:
\begin{equation}
\label{eqn.boundary2}
A \bu^N  +  \bx^N - x^N = \lambda, \qquad  \qquad   A\Tt \bp^0   -  \bq^0 + q^0= \mu.
\end{equation}
Where all the values of the state variables are obtained from:
\begin{equation}
\label{eqn.lineq2}
 \begin{array}{ll}
\displaystyle x_k^n=x^0_k + \sum_{m=1}^n  \dx^m_k \tau_m, &  k=1,\ldots,K,\;n=1,\ldots,N,  \\
\displaystyle q_j^n=q^N_j + \sum_{m={n+1}}^N  \dq^m_j \tau_m, & 
j=1,\ldots,J,\;n=1,\ldots,N.
\end{array}
\end{equation}

Theorem  S3.1 in \cite{shindin-weiss:14} states that for a proper base sequence, if the solution to (\ref{eqn.lineq1})--(\ref{eqn.lineq2})  is non-negative then it is an optimal solution, and every feasible M-CLP/M-CLP$^*$ problem has an optimal solution of this form, i. e. one can construct a proper base sequence such that solution of (\ref{eqn.lineq1})--(\ref{eqn.lineq2}) will be non-negative.

Given a proper base sequence, after solving (\ref{eqn.prates}), (\ref{eqn.drates}),  we can formulate all the equations (\ref{eqn.lineq1})--(\ref{eqn.lineq2}) as 
\begin{equation}
\label{eqn.ex-coupled}
M \bH = \bR.  \qquad 
\end{equation}
The matrix $M$ is a square matrix  of dimension $(J+K)(N+4)+N$, defined in Section S3.5
\footnote{There are some typos in the definition of $M$ in \cite{shindin-weiss:14}.  They were corrected in \cite{shindin:16}.}.  The vector of unknowns (writing all vectors as row vectors) is:
\[
 \begin{array}{ccccccccccccc}
\bH = [ & \bu^0 & x^0 &  \bp^N & q^N & \tau & x^1 \dots x^N  & q^0 \dots q^{N-1} & \bu^N & \bx^N &\bp^0 & \bq^0 & ]\Tt
\end{array}
\]
and the right hand side consists of the boundary parameters $\rho$ interspersed by properly dimensioned 0 vectors:
\begin{equation}
\label{eqn.transform}
\rho \to \bR: \bR = \left[ \begin{array}{cccccccccccccccc}
\beta\Tt & 0  & 0 & \gamma\Tt & 0 & 0 & 0 & T & 0 & 0 & \lambda\Tt & 0 & 0 & \mu\Tt & 0 & 0
\end{array}\right]\Tt
\end{equation} 

The set of equations (\ref{eqn.ex-coupled}) are formulated in such a way that they force $2K+ 2J$ boundary variables, $N-1$ variables that are left-hand sides of (\ref{eqn.lineq1}) and $N(K+J)- \sum_{n=1}^N (|\K_n|+|\J_n|)$ of the $x_k^n \in \{n,k:\dx^n_k = 0\},\, q_j^n \in \{n,j: \dq^{n+1}_j = 0\}$ that correspond to non-basic variables in the Rates-LP, to be zero. 
We denote this set of zero-valued variables by $\bH_\setZ$.  The set of remaining,  non-negative variables, is denoted by $\bH_\setP$. 
We say that the solution $H$ is {\em fully non-degenerate} if all components of  $\bH_\setP$ are $>0$.


\section{Decomposition}
\label{sec.decomp}

We introduce the following distinction between the state variables that are indexed by $\K_0,\J_{N+1}$:
\begin{definition}
Consider a fully non-degenerate solution, and the $x_k(t)$ with $k\in \K_0$.  If for some $t_n < T$, $x_k(t_n) = 0$ then we say that $x_k$ is tied.  On the other hand, if $x_k(t)>0$ for $0\le t < T$ then we say that $x_k$ is free.  We denote by $\K^=$ the indexes of the tied variables, and by $\K^{\uparrow\downarrow}$ the indexes of the free variables, so that $\K_0 = \K^= \cup \K^{\uparrow\downarrow}$.  We define $\J^=,\, \J^{\uparrow\downarrow}$ analogously for $q_j(t), j\in \J_{N+1}$.
\end{definition}
The tied state variables appear both in the boundary equations  (\ref{eqn.boundary1}) and in the time interval equations (\ref{eqn.lineq2}), and introduce coupling between these different sets of equations.  The free state variables appear only in the boundary equations (\ref{eqn.boundary1}).

Given an optimal solution of an M-CLP/M-CLP$^*$ problem, with data $A,b,c,\beta,\gamma,\lambda,\mu,T$, we introduce some more notations for various quantities:
\begin{eqnarray}
\label{eqn.xlevels}
&&  x_k^\bullet = \min_{0\le t < T} x_k(t),  \quad  \tilde{x}_k = x_k^0 -  x_k^\bullet, \quad
 k=1,\ldots,K,  \\
&&  q_j^\bullet  = \min_{0 < t \le T} q_j(t),  \quad  \tilde{q}_j = q_j^N -  q_j^\bullet,
 \quad j=1,\ldots,J.
 \end{eqnarray} 
Here $x_k^\bullet$ is the minimum of $x_k(t)$ over the range $0 \le t <T$ (excluding the value $x_k(T)=\bx^N$).  The values of  $ \tilde{x}_k$ can also be expressed as $\tx_k = - \min_{0 \le n \le N} \sum_{m=1}^n \dx_k^m \tau_m$ (where empty sum equals 0).  
 For $k\in \K^=$, $x_k^\bullet= 0$ and $\tilde{x}_k= x_k^0$.  For $k\in \K^{\uparrow\downarrow}$, $x_k^\bullet > 0$,  and $\tilde{x}_k = x_k^0 - x_k^\bullet\ge 0$.  For $k\not\in \K_0$, we have $x_k^0=x_k^\bullet=\tilde{x}_k=0$.  Analogous statements hold for $q_j(t)$.

We also introduce the following notation for the vectors of total cumulative controls excluding the impulse controls:
\begin{equation}
\label{eqn.cumcontrols}
 \tU=\int_0^T u(t) dt, \qquad \tP= \int_0^T p(t) dt,
\end{equation}
We refer to $\tU,\tP, \tx, \tq$ as decomposition parameters.
\begin{proposition}
\label{thm.uniqueparams}
Under Non-Degeneracy Assumption \ref{asm.nondeg} M-CLP/M-CLP$^*$ possess a unique set of decomposition parameters $\tU,\tP, \tx, \tq$.  They are all of them affine functions of elements of $\rho$.
\end{proposition}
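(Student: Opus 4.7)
The plan is to split the claim into uniqueness (at a fixed $\rho$) and affine dependence (as $\rho$ varies in a validity region).

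For uniqueness I would invoke the fact recalled in Section \ref{sec.structure} that, under Assumption \ref{asm.nondeg}, the piecewise-constant rate functions $u(t),p(t)$ on $(0,T)$ are common to all optimal solutions; only the impulse masses at $0$ and $T$ and the associated boundary states can vary. This immediately gives uniqueness of $\tU=\int_0^T u(t)\,dt$ and $\tP=\int_0^T p(t)\,dt$ in (\ref{eqn.cumcontrols}). It also pins down the breakpoints $t_1,\ldots,t_{N-1}$, hence the interval lengths $\tau_n$ and the slopes $\dx_k^n,\dq_j^n$. Using the identity noted just after (\ref{eqn.xlevels}),
\[
\tilde{x}_k=-\min_{0\le n\le N}\sum_{m=1}^n \dx_k^m\,\tau_m,
\]
(and its symmetric analogue for $\tilde{q}_j$), we see that $\tx,\tq$ depend only on already-unique quantities and are therefore themselves unique; note that the possibly non-unique boundary value $x_k^0$ has dropped out of the formula.

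For the affine dependence on $\rho$ I would work inside a single validity region, where the proper base sequence $(\K_n,\J_n)_{n=0}^{N+1}$ is fixed. Because Rates-LP and Rates-LP$^*$ (\ref{eqn.prates})--(\ref{eqn.drates}) depend only on $A,b,c$, the rates $u^n,p^n,\dx^n,\dq^n$ are constants in $\rho$. The remaining variables — interval lengths, state values at breakpoints, and boundary values — then satisfy the linear system $M\bH=\bR$ of (\ref{eqn.ex-coupled}), whose matrix $M$ is constant in $\rho$ and, by the structure paper \cite{shindin-weiss:14}, invertible. Since $\rho\mapsto\bR$ is linear through (\ref{eqn.transform}), $\bH=M^{-1}\bR$ depends affinely on $\rho$. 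In particular each $\tau_n$ and each boundary-state entry is affine in $\rho$, and so are $\tU=\sum_{n=1}^N u^n\tau_n$ and $\tP=\sum_{n=1}^N p^n\tau_n$. To finish with $\tx$ I would split on the tied/free decomposition: for $k\in\K^=$ we have $\tilde{x}_k=x_k^0$, a coordinate of $\bH$ and hence affine; for $k\in\K^{\uparrow\downarrow}$ the minimum $x_k^\bullet>0$ is attained at some breakpoint $t_{n^*}$, and at a fully non-degenerate $\rho$ this minimizer is unique, so on a neighborhood inside the region $n^*$ is preserved and $\tilde{x}_k=-\sum_{m=1}^{n^*}\dx_k^m\,\tau_m$ is a fixed linear combination of already-affine quantities. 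The argument for $\tq$ is symmetric.

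The main obstacle I anticipate is justifying that $n^*$ is constant throughout a validity region, since a priori $\tilde{x}_k$ is a minimum of finitely many affine functions of $\rho$ and thus only piecewise affine in general. I would resolve this by arguing that any change of $n^*$ would require two distinct breakpoints to simultaneously attain the minimum of $x_k(\cdot)$ — a degeneracy excluded either by the definition of validity region in Section \ref{sec.region} or by the perturbation procedure of Section \ref{sec.general}. Invertibility of $M$ is the other ingredient I would cite from \cite{shindin-weiss:14} rather than re-derive.
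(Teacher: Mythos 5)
Your proposal is correct and follows the same two-step route as the paper's (much terser) proof: uniqueness comes from the uniqueness of $u(t),p(t)$ given by Theorem D5.5(iii), which fixes $\dot{x},\dot{q}$, the breakpoints, and hence $\tilde{U},\tilde{P},\tilde{x},\tilde{q}$; affine dependence within a validity region comes from the linear system $M\bH=\bR$ with $M$ fixed and non-singular, together with $\tilde{U}=\sum_n u^n\tau_n$ and the expression of $\tilde{x},\tilde{q}$ through $\tau$ and $x^0,q^N$. You correctly flag the one real subtlety that the paper's appeal to Corollary S3.8 leaves implicit — that for a free index $\tilde{x}_k$ is a priori only a minimum of affine functions of $\rho$, so one must argue the minimizer does not change in the interior of the validity region; your resolution (a tied argmin is a non-generic degeneracy excluded on the interior, cf.\ Theorem \ref{thm.interior}) is as precise as what the paper itself offers.
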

\begin{proof}
Uniqueness  follows directly from Theorem D5.5(iii), which states that under Assumption 
\ref{asm.nondeg} $u(t),p(t)$ are uniquely determined, and as a result so are  $\dx(t),\dq(t)$.  Proof that they are affine functions of $\rho$ follows similar to the proof of  Corollary S3.8.  
%
%
 \rule{0.5em}{0.5em} \end{proof}

We are now ready to define a decomposition of M-CLP/M-CLP$^*$ to two parametric families of problems.  The first consists of  SCLP/SCLP$^*$ problems and concerns the solution in the interior of the time horizon.  The second  consists of pairs of Boundary-LP problems, and concerns the solution on the boundaries of the time horizon.

\subsubsection*{Internal-SCLP Problems:}
Consider M-CLP/M-CLP$^*$ problems with data $A,b,c$. 
We define a parametric family SCLP/ \\ SCLP$^*(x^0,q^N)$ with non-negative parameters $x^0, q^N$
\begin{eqnarray}
\label{eqn.PWSCLP4}
&\max & \int_0^T (-q^{N} + (T-t)c)\Tt u(t) \,dt   \nonumber   \hspace{0.9in} \\
\mbox{SCLP$(x^0,q^N)$} \quad & \mbox{s.t.} &  \int_0^t A\, u(s)\,ds  \le x^0 + b t, \\
&& \quad u(t)\ge 0, \quad 0 \le t \le T.  \nonumber
\end{eqnarray}
\begin{eqnarray}
\label{eqn.DWSCLP4}
&\min & \int_0^T ( x^0 + (T-t)b)\Tt p(t) \,dt     \nonumber  \hspace{0.9in} \\
\mbox{SCLP$^*(x^0,q^N)$}\quad  &\mbox{s.t.} &  \int_0^t  A\Tt\, p(s)\,ds \ge - q^{N}+ c t, \\
&& \quad p(t)\ge 0, \quad 0 \le t \le T.   \nonumber
\end{eqnarray}
These problem are a special case of separated continuous linear program (SCLP) of the form discussed in \cite{weiss:08}. We call these problems {\em  Internal-SCLP/ SCLP$^*$}.

\subsubsection*{Boundary Problems}
Consider an M-CLP/M-CLP$^*$ problem with  data $A,b,c,\beta,\gamma,T,\lambda,\mu$.  We define, similar to S15, S16 in \cite{shindin-weiss:14}, a family of pairs of LP problems,  for parameter vectors $\tilde{u},\tilde{x}, \tU, \tP$:
 \begin{equation}
\label{eqn.modprimalpbLP}
\mbox{\rm Boundary-LP$(\tU,\tilde{x})$ }\quad 
\begin{array}{ll}
\max & (\gamma +c T + \mu)\Tt \bu^0 + \gamma\Tt  \bu^N   \\
\mbox{s.t.}& A \bu^0  + x^\bullet     \qquad \quad           = \beta - \tilde{x},  \\
                 & A \bu^0  + A \bu^N  + \bx^N = \beta +  bT - A \tilde{U} + \lambda,  \\
& \bu^0,\,\bu^N,\,  x^\bullet,\,\bx^N \ge 0. \\
\end{array}  
\end{equation}
\begin{equation}
\label{eqn.moddualpbLP}
\mbox{\rm Boundary-LP$^*(\tilde{P},\tilde{q})$}\quad 
\begin{array}{ll}
\min & (\beta +b T + \lambda)\Tt \bp^N + \beta\Tt \bp^0   \\
\mbox{s.t.}&  A\Tt \bp^N   - q^\bullet    \qquad \quad           = \gamma + \tilde{q},  \\
                 & A\Tt \bp^N  + A\Tt \bp^0  - \bq^0 = \gamma +  c T - A\Tt \tilde{P} + \mu, \\
& \bp^N,\,\bp^0,\, q^\bullet,\,\bq^0 \ge 0.
\end{array}  
\end{equation}
Note, that Boundary-LP$(\tU,\tilde{x})$ and Boundary-LP$^*(\tilde{P},\tilde{q})$ share the same matrix of constraint coefficients (transposed), but they are not dual to each other.  Note also that they use disjoint sets of decomposition parameters ($\tU, \tx$ for Boundary-LP and $\tilde{P},\tilde{q}$ for  Boundary-LP$^*$).

\begin{theorem}
\label{thm.moddecomp}
(i) An optimal solution of M-CLP/M-CLP$^*$ can be decomposed to  optimal solutions of  Internal-SCLP/SCLP$^*(x^0, q^N)$ and feasible solutions of Boundary-LP$(\tU, \tx)$ and Boundary-LP$^*(\tP, \tq)$  satisfying: 
\begin{equation}
\label{eqn.boundarycomp}
\begin{array}{c}
\displaystyle 
x^0=x^\bullet + \tilde{x}, \quad q^N = q^\bullet+\tilde{q}, \quad  
 \tU=\int_0^T u(t) dt, \quad \tP= \int_0^T p(t) dt, \\
\displaystyle \tilde{x}_k = - \min_{1\le n \le N} \left(0, \sum_{m=1}^n \dx_k^m \tau_m \right),
\quad
\tilde{q}_j = - \min_{1\le n \le N} \left(0, \sum_{m=n}^N \dq_j^m \tau_m \right), \\
\displaystyle {x^0}\Tt \bp^0 = {\bx^N}\Tt \bp^N = {q^N}\Tt \bu^N = {\bq^0}\Tt \bu^0 = 0.
 \end{array}  
\end{equation}

(ii) Conversely, a combination of optimal solutions of  Internal-SCLP/ SCLP$^*(x^0, q^N)$, and of feasible solutions of Boundary-LP$(\tU, \tx)$ and Boundary-LP$^*(\tP, \tq)$, that satisfy (\ref{eqn.boundarycomp}) can be composed into an optimal solution of M-CLP/M-CLP$^*$.

(iii) The solutions of Boundary-LP$(\tU, \tx)$ and Boundary-LP$^*(\tP, \tq)$ considered in (i) and (ii) are optimal.
\end{theorem}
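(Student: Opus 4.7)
My plan is to prove the three parts through a feasibility/objective dictionary between M-CLP and its sub-problems, followed by a reassembly argument for optimality. First, I translate the sub-problem constraints into algebraic rewrites of the M-CLP boundary equations and integrated dynamics. The Boundary-LP first constraint $A\bu^0 + x^\bullet = \beta - \tx$ is (\ref{eqn.boundary1}) after substituting $x^0 = x^\bullet + \tx$; the second constraint $A\bu^0 + A\bu^N + \bx^N = \beta + bT - A\tU + \lambda$ comes from the integrated primal dynamics $A(\bu^0 + \tU) + x^N = \beta + bT$ at $t = T^-$ combined with (\ref{eqn.boundary2}). The SCLP$(x^0, q^N)$ constraint $\int_0^t A u(s)\,ds \le x^0 + bt$ follows by subtracting $A\bu^0 + x^0 = \beta$ from the pointwise M-CLP dynamics $AU(t) + x(t) = \beta + bt$. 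The identities in (\ref{eqn.boundarycomp}) are (\ref{eqn.compatible}) restated in terms of $x^\bullet$ and $q^\bullet$. The Boundary-LP$^*$ and SCLP$^*$ translations are symmetric.

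Next, I derive the objective-decomposition identity. Splitting the Lebesgue-Stieltjes integral in the M-CLP primal objective into its jump at $0$ (weight $\gamma + Tc$), its absolutely continuous part (density $u(t)$), and its jump at $T$ (weight $\gamma$), and then adding and subtracting $q^N\Tt u(t)$ under the integral, yields
\[
\mbox{obj(M-CLP)} \;=\; \mbox{obj(Boundary-LP)} \;+\; \mbox{obj(SCLP)} \;+\; (\gamma + q^N)\Tt \tU,
\]
where Boundary-LP is taken with parameters $(\tU, \tx)$ and SCLP with parameters $(x^0, q^N)$; a symmetric identity holds for M-CLP$^*$. By Proposition \ref{thm.uniqueparams}, the residual term depends only on parameters uniquely determined by $\rho$.

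Using these building blocks, part (ii) is a direct assembly: given the hypothesized sub-solutions, set $U(t) = \bu^0 + \int_0^t u(s)\,ds$ on $[0, T)$ and $U(T) = U(T-) + \bu^N$ with $x(t) = \beta + bt - AU(t)$, and symmetrically for $P, q$. Feasibility is the first step read in reverse; (\ref{eqn.compslack}) follows from (\ref{eqn.boundarycomp}) together with the internal SCLP/SCLP$^*$ complementary slackness from \cite{weiss:08}; and optimality follows from the objective identity combined with SCLP strong duality, which makes the assembled M-CLP primal and dual objectives agree. For part (i), extract the sub-solutions from an optimal M-CLP via (\ref{eqn.xlevels})--(\ref{eqn.cumcontrols}); the first step gives sub-problem feasibility and (\ref{eqn.boundarycomp}), while SCLP optimality follows because at the uniquely determined decomposition parameters any strictly improved SCLP solution would reassemble via (ii) with the same boundary data into a strictly better M-CLP solution, a contradiction. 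For part (iii), I will combine the objective identity with M-CLP global strong duality from \cite{shindin-weiss:13} and SCLP strong duality to pin down the Boundary-LP primal value, and then certify LP optimality by exhibiting explicit dual-feasible multipliers for Boundary-LP constructed from $\bp^0, \bp^N$ together with the dual boundary equations. The main obstacle is that Boundary-LP and Boundary-LP$^*$ are not LP duals of each other, so the LP-dual certificate must be a specific linear combination whose non-negativity and complementary slackness require careful bookkeeping of the sign restrictions on $\lambda, \mu$ and of the tied vs.\ free index sets $\K^=, \K^{\uparrow\downarrow}, \J^=, \J^{\uparrow\downarrow}$.
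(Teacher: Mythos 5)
Your overall plan (translate the constraint systems, split the Lebesgue--Stieltjes integral into a jump-at-$0$ part, an absolutely continuous part, and a jump-at-$T$ part, derive the objective identity, and then assemble) is in the same spirit as the paper's terse proof and is a reasonable way to make that proof explicit; the objective identity $\mathrm{obj(M\mbox{-}CLP)} = \mathrm{obj(Boundary\mbox{-}LP)} + \mathrm{obj(SCLP)} + (\gamma+q^N)\Tt\tU$ checks out. But there is a genuine gap in your argument for SCLP optimality in part (i). You argue by contradiction: a strictly better SCLP solution with the same $(x^0,q^N)$ would reassemble "with the same boundary data" into a strictly better M-CLP solution. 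This cannot be run as stated, because replacing $u(\cdot)$ changes $\tU$ and $\tx$, which changes the data of Boundary-LP$(\tU,\tx)$; the old $\bu^0,\bu^N,x^\bullet,\bx^N$ need not be feasible for the new Boundary-LP, and (\ref{eqn.boundarycomp}) is no longer satisfied, so part (ii) does not apply. Moreover, even if new compatible Boundary-LP solutions existed, the residual term $(\gamma+q^N)\Tt\tU$ also changes, so "strictly better SCLP objective" does not translate directly into "strictly better M-CLP objective." The direct and correct route is: the internal $u,x$ are feasible for SCLP$(x^0,q^N)$ and the internal $p,q$ are feasible for SCLP$^*(x^0,q^N)$ by your constraint-translation step, and they are complementary slack because (\ref{eqn.compslack}) restricted to $(0,T)$ gives $\int_0^T x(T-t)\Tt p(t)\,dt = \int_0^T q(T-t)\Tt u(t)\,dt = 0$; complementary slack feasible SCLP/SCLP$^*$ solutions are optimal by weak duality. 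This is what "it is immediate to see that \ldots\ solve the internal SCLP/SCLP$^*$" in the paper is pointing at.

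On part (iii), your plan to certify Boundary-LP optimality by constructing explicit dual-feasible LP multipliers from $\bp^0,\bp^N$ and the dual boundary equations is plausible, and you correctly flag the obstacle that Boundary-LP and Boundary-LP$^*$ are not LP duals; but you do not carry out the bookkeeping, so this piece is a sketch rather than a proof. The paper sidesteps this by invoking Corollary S3.3 of \cite{shindin-weiss:14}, which established optimality for the natural dual pair of boundary problems (S13)/(S14) and transfers here; if you want a self-contained argument you should either carry out the multiplier construction in detail, or show how optimality for (S13)/(S14) transfers to (\ref{eqn.modprimalpbLP})/(\ref{eqn.moddualpbLP}).

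Finally, a small point: in part (ii), the conclusion is cleaner if you note that the assembled $U,x,P,q$ are feasible and satisfy (\ref{eqn.compslack}) directly (jump terms vanish by the third line of (\ref{eqn.boundarycomp}), internal terms vanish by SCLP/SCLP$^*$ complementary slackness), and then appeal to M-CLP weak duality; you do not need SCLP strong duality or the objective identity for this direction.
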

\begin{proof}  
(i) Let $u(t), x(t), p(t), q(t), \bu^0, \bu^N, x^0, \bx^N, \bp^N, \bp^0, q^N, \bq^0$ be an optimal solution of M-CLP/M-CLP$^*$ as described in Theorem S3.1. Then it is immediate to see that the same $u(t), x(t), p(t), q(t)$ solve the Internal-SCLP/SCLP$^*$ $(x^0, q^N)$. 

Furthermore, one can see that $\bu^0, \bu^N, x^\bullet = x^0 - \tx, \bx^N$ is a feasible solution of Boundary-LP$(\tU, \tx)$ and $\bp^N, \bp^0, q^\bullet = q^N -\tq, \bq^0$ is a feasible solution of Boundary-LP$^*(\tP, \tq)$.

(ii) On the other hand, optimal solution of SCLP/SCLP$^*$ provides $u(t),$ $p(t),x(t),q(t)$ and solutions of Boundary-LP/LP$^*$ provides $\bu^0,\bu^N,x^\bullet, \bx^N,\bp^0,$ $\bp^N,q^\bullet, \bq^0$. It is immediate to see that if these satisfy (\ref{eqn.boundarycomp}) then they  provide a feasible complementary slack solution of  M-CLP/M-CLP$^*$. 

(iii) This follows from Corollary S3.3, where a proof was given for  $\lambda = \mu = 0$. The result for  $\lambda < 0, \mu > 0$ is essentially same. \qquad
 \rule{0.5em}{0.5em} \end{proof}

\begin{corollary}
\label{thm.forcedzero}
Consider a solution of Internal-SCLP/SCLP$^*$ and of Boundary-LP/LP$^*$ that satisfies (\ref{eqn.boundarycomp}).   Then for the solution of Boundary-LP/LP$^*$  the following holds:

(i) The solutions of Boundary-LP$(\tilde{U}, \tilde{x})$ and of Boundary-LP$^*(\tilde{P}, \tilde{q})$ are complementary slack.

(ii) For $k\in \K^=$ we have $x_k^\bullet=0$ , and for $j\in \J^=$ we have $q_j^\bullet=0$.

(iii) For $j\in \J^=$ we have $\bu_j^N = 0$, and for $k\in \K^=$ we have $\bp_k^0=0$. 
\end{corollary}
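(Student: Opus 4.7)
The plan is to combine the complementary-slackness products in the last line of (\ref{eqn.boundarycomp}) with the definitions of $\K^=, \J^=, x_k^\bullet, q_j^\bullet$, together with the fully-non-degeneracy that is implicit in those definitions.

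Part (ii) is immediate from definitions. For $k \in \K^=$ there exists some $t_n < T$ with $x_k(t_n) = 0$, and since $x_k(t) \ge 0$ on $[0,T)$ this forces $x_k^\bullet = \min_{0 \le t < T} x_k(t) = 0$; the dual analogue gives $q_j^\bullet = 0$ for $j \in \J^=$.

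For part (iii), I would use that $j \in \J^= \subseteq \J_{N+1}$ places $q_j^N$ in $\bH_\setP$, so $q_j^N > 0$ in the fully non-degenerate solution. The relation $(q^N)\Tt \bu^N = 0$ from (\ref{eqn.boundarycomp}), together with termwise non-negativity, then forces $\bu_j^N = 0$. The symmetric argument, starting from $x_k^0 > 0$ for $k \in \K^= \subseteq \K_0$ together with $(x^0)\Tt \bp^0 = 0$, yields $\bp_k^0 = 0$.

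For part (i), I would identify complementary slackness between Boundary-LP$(\tU,\tx)$ and Boundary-LP$^*(\tP,\tq)$ as the vanishing of the four inner products $(x^\bullet)\Tt \bp^0$, $(\bx^N)\Tt \bp^N$, $(q^\bullet)\Tt \bu^N$, and $(\bq^0)\Tt \bu^0$, formed by pairing the primal variables of the two programs. Two of them are already listed in (\ref{eqn.boundarycomp}). For the remaining two, I would expand $x^0 = x^\bullet + \tx$ inside $(x^0)\Tt \bp^0 = 0$; since every summand is non-negative, each vanishes individually, giving $(x^\bullet)\Tt \bp^0 = 0$. The same device with $q^N = q^\bullet + \tq$ inside $(q^N)\Tt \bu^N = 0$ yields $(q^\bullet)\Tt \bu^N = 0$. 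I expect the only real obstacle to be interpretive rather than technical: since Boundary-LP and Boundary-LP$^*$ are not formally dual programs, one has to first identify the correct variable pairing so that the assertion ``complementary slack'' is even well-posed.
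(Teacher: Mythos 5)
Your proposal is correct and follows essentially the same route as the paper: decompose the vanishing inner products of (\ref{eqn.boundarycomp}) via $x^0 = x^\bullet + \tx$ and $q^N = q^\bullet + \tq$ into termwise-zero pieces, read off the complementary-slack pairing $(x^\bullet,\bp^0)$, $(\bx^N,\bp^N)$, $(q^\bullet,\bu^N)$, $(\bq^0,\bu^0)$, get (ii) directly from the definition of $\K^=,\J^=$, and get (iii) from ${x^0}\Tt \bp^0 = {q^N}\Tt \bu^N = 0$ using strict positivity of $x_k^0$ for $k\in\K^=$ and $q_j^N$ for $j\in\J^=$. Your version is, if anything, a touch more explicit than the paper's about where the full-non-degeneracy hypothesis (implicit in the definition of $\K^=,\J^=$) is invoked to guarantee $x_k^0>0$ and $q_j^N>0$.
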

\begin{proof}
One can see that from (\ref{eqn.boundarycomp}) follows:
\[
{x^\bullet}\Tt \bp^0 = {\tx}\Tt \bp^0 = {\bx^N}\Tt \bp^N = {q^\bullet}\Tt \bu^N = {\tq}\Tt \bu^N = {\bq^0}\Tt \bu^0 = 0.
\]
 In particular this implies:
 
(i) Variables $\bu_j^0,\bq_j^0$, variables $\bp_k^N,\bx_k^N$, variables $x_k^\bullet,\bp_k^0$ and variables $q_j^\bullet,\bu_j^N$ are complementary slack.

(ii)  In the solution of  Internal-SCLP/SCLP$^*(x^0,q^N)$, $x_k^0 = \tilde{x}_k,\,k\in\K^=$, and hence $x_k^\bullet=0$ for $k\in\K^=$.  Similarly, $q_j^\bullet=0$ for $j\in \J^=$.

(iii)  By complementary slackness of $x^0,\bp^0$ and $q^N,\bu^N$, we also have that $\bp_k^0=0$ for $k\in \K^=$, $\bu_j^N = 0$ for $j\in \J^=$.
 \rule{0.5em}{0.5em} \end{proof}
The problems Boundary-LP($\tU, \tx$), Boundary-LP$^*(\tP, \tq)$ may have non-unique optimal solutions for a given set of parameters $\tU, \tx, \tP, \tq$.  If there are multiple solutions then it is not obvious how to check if (\ref{eqn.boundarycomp}) holds for some solutions.  The following Theorem answers these questions.
\begin{theorem}
\label{thm.corresp}
Under the Non-Degeneracy Assumption \ref{asm.nondeg} there is a one to one correspondence between optimal solutions of M-CLP and Boundary-LP($\tU, \tx$). The same holds for optimal solutions of M-CLP$^*$ and Boundary-LP$^*(\tP, \tq)$.
\end{theorem}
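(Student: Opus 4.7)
The plan is to build the correspondence explicitly as a map, verify it is well-defined, then prove injectivity and surjectivity separately; surjectivity will require the main technical work. I will present only the primal statement; the dual case is symmetric.

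\textbf{Well-definedness and injectivity.} First I would note that by Proposition \ref{thm.uniqueparams}, $\tU$ and $\tx$ are uniquely determined functions of $\rho$, so the Boundary-LP$(\tU,\tx)$ problem is itself well-defined from the M-CLP data. Define a map $\Phi$ from optimal M-CLP solutions to feasible Boundary-LP$(\tU,\tx)$ points by
\[
(\bu^0,\bu^N,x^0,\bx^N) \;\longmapsto\; (\bu^0,\bu^N,\,x^\bullet,\,\bx^N), \qquad x^\bullet:=x^0-\tx.
\]
Nonnegativity of $x^\bullet$ is built into the definition of $\tx$, and Theorem \ref{thm.moddecomp}(iii) already guarantees that the image is optimal for Boundary-LP$(\tU,\tx)$, so $\Phi$ is a well-defined map into the set of optimal Boundary-LP solutions. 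For injectivity I would use that, under Assumption \ref{asm.nondeg}, the interior rates $u(t),x(t),p(t),q(t)$ are unique (Theorem D5.5(iii) and Proposition \ref{thm.uniqueparams}), so an optimal M-CLP solution is fully determined by the quadruple $(\bu^0,\bu^N,x^0,\bx^N)$; since $\Phi$ merely shifts $x^0$ by the fixed vector $\tx$ and is the identity on the other coordinates, it is injective on the set of boundary quadruples, hence on the set of M-CLP optima.

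\textbf{Surjectivity.} Pick any optimal Boundary-LP$(\tU,\tx)$ point $(\bu^0,\bu^N,x^\bullet,\bx^N)$ and set $x^0:=x^\bullet+\tx$. I would combine it with the unique internal SCLP$(x^0,q^N)$ solution (where $q^N=q^\bullet+\tq$ comes from any fixed optimal M-CLP$^*$ solution, which exists by strong duality) and with the Boundary-LP$^*(\tP,\tq)$ and internal SCLP$^*$ pieces supplied by that same M-CLP$^*$ solution. By Theorem \ref{thm.moddecomp}(ii), optimality of the resulting M-CLP candidate will follow once the complementary slackness conditions in the last line of (\ref{eqn.boundarycomp}), namely ${x^0}\Tt \bp^0={\bx^N}\Tt \bp^N={q^N}\Tt \bu^N={\bq^0}\Tt \bu^0=0$, are verified. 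For the fixed M-CLP$^*$ data $(\bp^0,\bp^N,\bq^0)$, these reduce to sign/support constraints on $(\bu^0,\bu^N,x^0,\bx^N)$ indexed by whether coordinates fall in $\K_0,\K^=,\K^{\uparrow\downarrow},\K_{N+1}$ and their complements, and analogously for $\J$.

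\textbf{The main obstacle} is exactly this last verification: why must an \emph{arbitrary} optimal Boundary-LP solution respect the support pattern enforced by the fixed dual $(\bp^0,\bp^N,\bq^0)$? I see two workable routes. The cleaner one is LP duality: writing out the linear-programming dual of Boundary-LP$(\tU,\tx)$ and checking that its optimal multipliers can be identified with $\bp^0,\bp^N$ from M-CLP$^*$ (they play the role of Lagrange multipliers of the two equality blocks); then LP complementary slackness between Boundary-LP and its own dual delivers the four required orthogonality relations at once. The alternative is a structural argument using that Assumption \ref{asm.nondeg} forces the base-sequence indices $\K_0,\K_{N+1},\J_0,\J_{N+1}$ to be uniquely determined, hence the zero pattern of $x^0,\bx^N,\bu^0,\bu^N$ is the same across all optimal Boundary-LP solutions, and Corollary \ref{thm.forcedzero} then supplies the remaining identities on $\K^=$ and $\J^=$. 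Either route, combined with Theorem \ref{thm.moddecomp}(ii), closes surjectivity and completes the bijection.
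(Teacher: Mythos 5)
Your well-definedness and injectivity arguments are fine and essentially match the paper's. The surjectivity direction, however, has a genuine gap, and neither of the two routes you sketch to close it actually works. Route 1 cannot succeed as stated: the paper explicitly observes, right after (\ref{eqn.moddualpbLP}), that Boundary-LP$(\tU,\tx)$ and Boundary-LP$^*(\tP,\tq)$ are \emph{not} LP duals of each other. Writing out the LP dual of (\ref{eqn.modprimalpbLP}) with multipliers $y_1,y_2$ for the two equality blocks gives constraints $A\Tt(y_1+y_2)\ge\gamma+cT+\mu$, $A\Tt y_2\ge\gamma$, $y_1,y_2\ge 0$, whereas the second block of (\ref{eqn.moddualpbLP}) carries the extra $-A\Tt\tP$ term, and the objectives differ by $\bp^0\Tt\tx+\bp^N\Tt A\tU$; so identifying the LP multipliers with the M-CLP$^*$ boundary values $\bp^0,\bp^N$ is itself an unproved claim. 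Route 2 is based on a false premise: Assumption \ref{asm.nondeg} forces uniqueness of the internal rates $u,p,\dx,\dq$ (Theorem D5.5(iii)), not of the boundary index sets $\K_0,\K_{N+1},\J_0,\J_{N+1}$. On the boundary between two validity regions several proper base sequences are optimal simultaneously, and this theorem is precisely the tool the paper later uses to reason about such points (see boundary pivots of type II, where Boundary-LP does have multiple optima). Indeed Theorem \ref{thm.basis} is careful to add the separate hypothesis that M-CLP has a unique solution with $H_\setP>0$ before the boundary sets become rigid, so you cannot read that rigidity off the non-degeneracy assumption alone.

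The paper closes surjectivity by a shorter argument that sidesteps complementary slackness verification entirely: an objective-value decomposition. Any optimal $(\bu^0,\bu^N,x^\bullet,\bx^N)$ of Boundary-LP$(\tU,\tx)$, combined with the unique internal rates, produces a \emph{feasible} M-CLP solution (the $t=0$ and $t=T$ constraints reduce exactly to the two Boundary-LP equality blocks, and $x(t)\ge 0$ for $0\le t<T$ is precisely $x^\bullet\ge 0$ by definition of $\tx$). Its M-CLP objective satisfies
\[
V(\text{M-CLP}) \;=\; V(\text{Boundary-LP}(\tU,\tx)) \;+\; \int_{0+}^{T-}(\gamma+c(T-t))\Tt u(t)\,dt,
\]
and the integral is a fixed constant because $u$ is unique. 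Since all Boundary-LP optima share the same $V(\text{Boundary-LP})$, and one of them arises from decomposing an M-CLP optimum (Theorem \ref{thm.moddecomp}), every feasible M-CLP solution built this way attains the M-CLP optimal value and is therefore optimal. This is the step missing from your sketch; neither LP duality of the boundary problems nor rigidity of the boundary index sets is needed.
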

\begin{proof} From Proposition \ref{thm.uniqueparams} it follows that under Non-Degeneracy Assumption \ref{asm.nondeg} the formulation of Bound\-ary-LP($\tU, \tx$) problem is unique, whether M-CLP has a unique solution or not.

Furthermore, each optimal solution of M-CLP determines the boundary values $\bu^0,\bu^N,\bx^N$ and the values of $x^\bullet = x^0 - \tx$ that by Theorem \ref{thm.moddecomp}(iii) is an optimal solution of Boundary-LP($\tU, \tx$). Hence, each optimal solution of M-CLP uniquely defines an optimal solution of the Boundary-LP($\tU, \tx$).

Conversely, each optimal solution of Boundary-LP($\tU, \tx$) together with $\dx(t), u(t)$ that are unique by Theorem D5.5(iii) produces a feasible solution of M-CLP. Moreover, objective values of these solutions satisfy:
\[ V(\text{M-CLP}) = V(\text{Boundary-LP}(\tU, \tx)) + \int_{0+}^{T-} (\gamma + c(T-t))\Tt u(t) dt. 
\]
The second term on the right hand side is uniquely determined, and therefore it follows that all the optimal solutions of Boundary-LP($\tU, \tx$) produces a feasible solution of M-CLP with same objective value.   
 However, by Theorem \ref{thm.moddecomp} one of the optimal solutions of Boundary-LP($\tU, \tx$),  is obtained by the decomposition of an optimal solution of M-CLP and hence each optimal solution of  Boundary-LP($\tU, \tx$) uniquely defines an optimal solution of M-CLP.
 \rule{0.5em}{0.5em} \end{proof}

{\bf Note:}  Theorem \ref{thm.corresp} explains why we choose the formulations  (\ref{eqn.modprimalpbLP}), (\ref{eqn.moddualpbLP})  of the Boundary-LP/LP$^*$ problems.  This one to one correspondence of optimal solutions that is shown in the Theorem does not exist for the  more natural pair of boundary problems (S13), (S14) in \cite{shindin-weiss:14}.  

\subsection{Boundary Simplex Dictionary}
\label{sec.dictionary}

By the complementary slackness condition in equations (\ref{eqn.compatible}), there are 
 exactly $2K+2J$ boundary values which are 0, and up to $2K+2J$ positive ones.  
  However, as it turns out, the number  of positive primal boundary values may be different from $2K$.  In fact the number of positive primal boundary values is $|\K_0|+|\overline{\J_0}|+|\K_{N+1}|+|\overline{\J_{N+1}}| = 2K+L$, and correspondingly,   
 the number of positive dual boundary values $|\J_{N+1}|+|\overline{\K_{N+1}}|+|\J_0|+|\overline{\K_0}| = 2J-L$ where $L$ may be either 0 or positive or negative.   In other words, the set of columns of $\left[\begin{array}{cccc} 
 A & 0 & I & 0 \\ A & A & 0 & I  \end{array}\right]$ that correspond to positive primal boundary values may not be a basis,  and similarly for the dual.

Recall also that the problems Boundary-LP$(\tU, \tx)$ and Boundary-LP$^*(\tP, \tq)$ are not dual to each other.  However, optimal solutions of Boundary-LP$(\tU, \tx)$ and Boundary-LP$^*(\tP, \tq)$ that correspond to optimal solutions of M-CLP/M-CLP$^*$ are unique and are  complementary  slack.  If possible we wish to 
find basic solutions that are summarized by a single   {\em  boundary simplex dictionary}.  This dictionary will enables us to define a boundary pivot operation in Section \ref{sec.bound_pivot}.   The following theorem states when this is possible.
\begin{theorem}
\label{thm.basis}
If M-CLP/M-CLP$^*$ has a unique solution with $H_\setP>0$, then its optimal base sequence  satisfies $|\K^{\uparrow\downarrow}| + |\overline{\J_0}| + |\K_{N+1}| + |\overline{\J_{N+1}}| \le 2K$ and $|\J_0| + |\overline{\K_0}| + |\J^{\uparrow\downarrow}| + |\overline{\K_{N+1}}| \le 2J$.
\end{theorem}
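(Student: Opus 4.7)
My plan is to combine Theorem \ref{thm.corresp} with a vertex-counting argument for the Boundary-LPs. By Theorem \ref{thm.corresp}, the assumed uniqueness of the optimal solution of M-CLP is equivalent to uniqueness of the optimal solution of Boundary-LP$(\tU,\tx)$. A unique LP optimum must be attained at a vertex of the feasible polyhedron (otherwise the optimal face has dimension at least one, contradicting uniqueness), so this unique optimum is a basic feasible solution. Since the formulation \eqref{eqn.modprimalpbLP} has $2K$ equality constraints, such a basic feasible solution has at most $2K$ strictly positive components among $\bu^0,\bu^N,x^\bullet,\bx^N$.

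Next I enumerate which components of $(\bu^0,\bu^N,x^\bullet,\bx^N)$ are actually strictly positive at this unique optimum, under the hypothesis $\bH_\setP>0$. The complementary slackness bookkeeping in \eqref{eqn.compatible} forces $\bu^0_j=0$ for $j\in\J_0$, $\bu^N_j=0$ for $j\in\J_{N+1}$, $x^0_k=0$ for $k\notin\K_0$, and $\bx^N_k=0$ for $k\notin\K_{N+1}$; the remaining components are nonzero precisely when $\bH_\setP>0$. Passing from $x^0$ to $x^\bullet=x^0-\tx$ and using the definition of $\K^=$ versus $\K^{\uparrow\downarrow}$, I get $x^\bullet_k>0$ iff $k\in\K^{\uparrow\downarrow}$. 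Corollary \ref{thm.forcedzero} then supplies no further forced zeros beyond these, since the extra conditions $x_k^\bullet=0$ for $k\in\K^=$ and $\bu_j^N=0$ for $j\in\J^=$ are already captured by $\K^=\subseteq\K_0$ and $\J^=\subseteq\J_{N+1}$. Thus the set of strictly positive Boundary-LP variables is indexed exactly by $\overline{\J_0}$ (for $\bu^0$), $\overline{\J_{N+1}}$ (for $\bu^N$), $\K^{\uparrow\downarrow}$ (for $x^\bullet$), and $\K_{N+1}$ (for $\bx^N$).

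Summing these cardinalities and combining with the $\le 2K$ bound from paragraph one yields the first inequality
\[
|\K^{\uparrow\downarrow}|+|\overline{\J_0}|+|\K_{N+1}|+|\overline{\J_{N+1}}|\le 2K.
\]
The second inequality is the symmetric statement obtained by running the identical argument on Boundary-LP$^*(\tP,\tq)$, which has $2J$ equality constraints and whose unique optimum has strictly positive components indexed by $\overline{\K_{N+1}}$ (for $\bp^N$), $\overline{\K_0}$ (for $\bp^0$), $\J^{\uparrow\downarrow}$ (for $q^\bullet$), and $\J_0$ (for $\bq^0$). I expect the only delicate step to be paragraph two's bookkeeping: one must carefully verify that replacing the M-CLP variable $x^0$ (positive on $\K_0$) by the Boundary-LP variable $x^\bullet$ (positive on $\K^{\uparrow\downarrow}$) is done without double-counting or missing any forced zeros, and that the full non-degeneracy assumption $\bH_\setP>0$ really does make every non-forced component strictly positive in the Boundary-LP solution; once that audit is clean, the inequality is immediate from the classical LP vertex bound.
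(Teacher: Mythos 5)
Your proposal is correct and follows essentially the same route as the paper's own proof: both pass through Theorem \ref{thm.corresp} to reduce uniqueness of the M-CLP optimum to uniqueness of the Boundary-LP optimum, and both then invoke the classical fact that a unique LP optimum must be a basic feasible solution with at most $2K$ (resp.\ $2J$) positive components. The paper states the count $|\K^{\uparrow\downarrow}|+|\overline{\J_0}|+|\K_{N+1}|+|\overline{\J_{N+1}}|$ without justification and argues by contraposition, whereas you argue directly and spell out the $x^0 \to x^\bullet$ bookkeeping; this is a presentational difference, not a different approach.
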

{\bf Note}:  while $|\K_0|+|\overline{\J_0}|+|\K_{N+1}|+|\overline{\J_{N+1}}|$ may be larger than $2K$, the theorem states that under the given conditions, if $|\K_0|$ is replaced by $|\K^{\uparrow\downarrow}|$, the sum cannot exceed $2K$.
\begin{proof}
If M-CLP/M-CLP$^*$ has a unique solution with $H_\setP > 0$,
then the optimal solution of the corresponding  Boundary-LP (\ref{eqn.modprimalpbLP}) has $|\K^{\uparrow\downarrow}| + |\overline{\J_0}| + |\K_{N+1}| + |\overline{\J_{N+1}}|$ positive variables.
But if $|\K^{\uparrow\downarrow}| + |\overline{\J_0}| + |\K_{N+1}| + |\overline{\J_{N+1}}| >2 K$ this implies that it has a non-basic optimal solution, and therefore its solution is non-unique.  But then by Theorem \ref{thm.corresp} M-CLP has non-unique optimal solutions, which is a contradiction.  
 \rule{0.5em}{0.5em} \end{proof}

We now  decompose the matrix of coefficients of (\ref{eqn.modprimalpbLP}) $\left[ \begin{array}{cccc} A & 0 & I & 0 \\ A & A & 0 & I \end{array} \right]$ into a basic and non-basic part, with similar decomposition for the matrix of coefficients of (\ref{eqn.moddualpbLP})  $\left[ \begin{array}{cccc} A\Tt & 0 & -I & 0 \\ A\Tt & A\Tt & 0 & -I \end{array} \right]$.
Let $\bB$ be a basic submatrix of the former, $\bB^*$ a basic matrix of the latter, such that they are complementary slack, and let $\bN$, $\bN^*$  be the matrices composed from the corresponding non-basic columns, so that (as seen for example in \cite{vanderbei:14}) $\bB^{-1} \bN = (\bB^{*-1} \bN^*)\Tt$.  
We then say that $\bB$ and $\bB^*$ are {\em compatible}.

We now consider a base sequence that satisfies $|\K^{\uparrow\downarrow}| + |\overline{\J_0}| + |\K_{N+1}| + |\overline{\J_{N+1}}| \le 2K$ and $ |\J^{\uparrow\downarrow}| + |\overline{\K_{N+1}}|+|\J_0| + |\overline{\K_0}| \le 2J$.  For such a base sequence we now specify rules to construct  a pair of compatible bases of boundary variables:
\begin{compactitem}[-]
\item define primal variables with indexes that are members of $\K^{\uparrow\downarrow}, \overline{\J_0}, \K_{N+1},$ $ \overline{\J_{N+1}}$ as basic variables for $\bB$ and dual variables with indexes that are members of $\J^{\uparrow\downarrow}, \overline{\K_{N+1}}, \J_{0}, \overline{\K_{0}}$ as basic variables for $\bB^*$,
\item if the solution of M-CLP possesses $\ge 2K$ strictly positive primal boundary values then choose an arbitrary subset of size $|\K_0| + |\overline{\J_0}| + |\K_{N+1}| + |\overline{\J_{N+1}}| - 2K$ of indexes out of $\K^=$ and define those $\bp_k^0$ as dual basic variable in $\bB^*$, and for the remaining indexes of $\K^=$ define $x_k^\bullet$ as primal basic variable in $\bB$.  Recall that for all the variables in $\K^=$, $x_k^\bullet = 0$, and at the same time also $\bp_k^0 = 0$ by complementary slackness.  So there is no problem in naming either  $x_k^\bullet$ as primal basic, or $\bp_k^0$ as dual basic, for an arbitrary $k\in \K^=$.

Define all the variables $q_j^\bullet,\,j\in \J^=$ as dual basic variables in $\bB^*$. 
\item similarly,
 if the solution of M-CLP$^*$ possesses $\ge 2J$ strictly positive boundary values,
 choose an arbitrary subset of size $|\J_{N+1}| + |\overline{\K_{N+1}}| + |\J_0| + |\overline{\K_0}| - 2J$ of indexes out of $\J^=$ and define those $\bu_j^{N}$ as  basic variables in $\bB$, and for the remaining indexes of $\J^=$ define $q_j^\bullet$ as dual basic variable in $\bB^*$.  Define all the variables $x_k^\bullet,\,j\in \K^=$ as primal basic variables in $\bB$. 
%
\end{compactitem} 
Using compatible boundary bases we define a boundary simplex dictionary $\D$ by:
\begin{equation}
\label{eqn.boundarybasis}
\D = \begin{array}{c|c}
&  \bv^*_{\B^*} \\
\hline \\
\bv_{\B} & \hbA = \bB^{-1} \bN = (\bB^{*-1} \bN^*)\Tt
\end{array}
\end{equation}
where $\bv = \{x^\bullet = x^0 - \tx, \bu^0, \bx^N, \bu^N \}$, $\bv^* = \{q^\bullet =  q^N - \tq, \bp^N, \bq^0, \bp^0 \}$ are values of variables of Boundary-LP/LP$^*$ and $\B, \B^*$ are sets of indexes of  the corresponding basic variables. 

In our algorithm, at iterations in which a boundary pivot will be required, changes in $(\K_n,\J_n)_{n\in\{0,N+1\}}$ will be achieved by pivots of this specially constructed dictionary.  In the proof of Theorem \ref{thm.boundary-unique}, in Appendix \ref{sec.uniqueproof}
we show that the arbitrary choice involved in constructing the compatible bases for the dictionary does not lead to any ambiguity.


\section{Validity Regions}
\label{sec.region}
Throughout this section we assume that Non-Degeneracy Assumption \ref{asm.nondeg} holds. 
Similar to the definition of validity region S3.6 we define:
\begin{definition}
Let $(\K_n, \J_n)_{n =0}^{N+1}$ be a proper base sequence. Let $\V$  be the set of all $\rho = (\beta, \gamma,  \lambda, \mu, T)$ for which this base sequence is optimal. Then  $\V$ is called the validity region of $(\K_n, \J_n)_{n =0}^{N+1}$.  Let $\F$ be the union of all validity regions, then $\F$ is called the parametric feasible region.
\end{definition}
As in Theorem S3.7, each validity region is a closed convex polyhedral cone (it may consist only of the origin). The parametric feasibility region is clearly convex, and it is a union of validity regions, hence $\F$  is also a closed convex polyhedral cone.
 
Similar to Corollary S3.8 we can state:
\begin{corollary}
\label{thm.affine}
For a given optimal base sequence, within its validity region, the elements of $\bH_\setP$ are  affine  functions of the non-zero elements of $\bR$.  Conversely, consider all optimal solutions  $\bH$ that belong to a given optimal base sequence.  Then the non-zero elements of $\bR$ change linearly as a function of non-zero elements of $\bH_\setP$.
\end{corollary}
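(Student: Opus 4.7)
The plan is to read the dependence directly from the linear system (\ref{eqn.ex-coupled}), $M\bH = \bR$. Since the fixed base sequence prescribes which coordinates of $\bH$ vanish, I split the columns of $M$ according to the partition $\bH = (\bH_\setP, \bH_\setZ)$ as $M = [M_\setP \mid M_\setZ]$. Because $\bH_\setZ = 0$, equation (\ref{eqn.ex-coupled}) collapses to $M_\setP \bH_\setP = \bR$. At the same time, the map $\rho \mapsto \bR$ defined in (\ref{eqn.transform}) is a linear injection whose image is the coordinate subspace spanned by precisely the non-zero entries of $\bR$.

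For the first (forward) direction, I would argue that $M_\setP$ has full column rank, so that $\bH_\setP$ is recoverable as a linear (hence affine) function of $\bR$. For any $\rho$ in the validity region, Theorem S3.1 guarantees that the optimal solution $\bH$ corresponding to the given base sequence exists and is characterized by (\ref{eqn.ex-coupled}); Proposition \ref{thm.uniqueparams} together with the proper-base-sequence structure then forces this solution to be unique. If $M_\setP$ had a nontrivial kernel element $\bH_\setP'$, then for any $\rho\in\V$ we could produce a second solution $\bH_\setP + \bH_\setP'$ of $M_\setP \bH_\setP = \bR(\rho)$, contradicting uniqueness. Therefore $M_\setP$ admits a left inverse $L$, and on $\V$ we have $\bH_\setP = L\bR$. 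Composing with (\ref{eqn.transform}) yields the desired affine (indeed linear) dependence on the non-zero entries of $\bR$.

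The converse falls out of the same identity. For any collection of optimal $\bH$ sharing the given base sequence, $\bR = M\bH = M_\setP \bH_\setP$ already expresses each non-zero coordinate of $\bR$ as a fixed linear combination of the coordinates of $\bH_\setP$. As $\bH_\setP$ ranges over the corresponding set of non-zero solution vectors, the non-zero entries of $\bR$ therefore move linearly in $\bH_\setP$.

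The main obstacle I anticipate is the injectivity of $M_\setP$. Everything else is a one-line consequence of the linearity of $M\bH=\bR$ and the structure of (\ref{eqn.transform}); the substantive content is that solutions of $M_\setP \bH_\setP = \bR$ respecting the base sequence are unique. I plan to pull this uniqueness from Theorem S3.1 combined with Proposition \ref{thm.uniqueparams} rather than attempt a direct block-wise non-singularity proof for $M_\setP$, since the latter would require a notationally heavy bookkeeping over the boundary equations (\ref{eqn.boundary1})--(\ref{eqn.boundary2}), the interval equations (\ref{eqn.lineq1})--(\ref{eqn.lineq2}), and the complementary slackness conditions (\ref{eqn.compatible}).
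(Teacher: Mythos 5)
Your decomposition $M=[M_\setP\mid M_\setZ]$ and the reduction to $M_\setP\bH_\setP=\bR$ are fine, and the converse direction is indeed immediate from $\bR=M_\setP\bH_\setP$ being a linear expression. The paper itself gives no proof here (it just points at Corollary S3.8), so the comparison is necessarily against what the surrounding results can and cannot deliver.

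The gap is in your justification that $M_\setP$ is injective. You propose to "pull this uniqueness from Theorem~S3.1 combined with Proposition~\ref{thm.uniqueparams}," but neither result delivers it. Theorem~S3.1 is an existence/sufficiency statement: a non-negative solution of the system is optimal, and every feasible problem has an optimal solution of this form; it says nothing about uniqueness of $\bH$ for a fixed base sequence. Proposition~\ref{thm.uniqueparams} gives uniqueness only of the decomposition parameters $\tU,\tP,\tx,\tq$ (and, via D5.5(iii), of $u,p,\dx,\dq$); it does not touch the boundary components $\bu^0,\bu^N,\bp^0,\bp^N,\bx^N,\bq^0$ or $x_k^0$ for $k\in\K^{\uparrow\downarrow}$, $q_j^N$ for $j\in\J^{\uparrow\downarrow}$, which are exactly the ones the paper flags as potentially non-unique in the proof of Theorem~\ref{thm.interior}. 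Uniqueness of $\bH$ for fixed $\bR$ is essentially equivalent to what you are trying to prove (injectivity of $M_\setP$), so appealing to it without an independent source is circular. The result that actually supplies what you need is Theorem~\ref{thm.interior} together with Corollary~\ref{thm.nonsingular} (non-singularity of $M$ for base sequences whose validity region has non-empty interior), but those appear later in the section and their proofs require a genuine perturbation argument in $\rho$, not just a recount of which quantities are determined by D5.5(iii). To repair the proof you should either invoke the non-singularity of $M$ established in the structure paper (the route the authors implicitly take via Corollary~S3.8), or restrict explicitly to validity regions with non-empty interior and carry out the perturbation argument that Theorem~\ref{thm.interior} uses.
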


\begin{proposition}
\label{thm.number}
The number of validity regions is bounded by $\binom{4(K+J)}{2(K+J)}\; 2^{\binom{K+J}{K}}$.
\end{proposition}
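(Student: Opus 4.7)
My plan is to reduce the count of validity regions to the count of proper base sequences, and then bound the number of proper base sequences by factorizing the combinatorial data into a boundary part and an internal part.

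First, since each validity region $\V$ is (by definition) the set of parameters for which a particular proper base sequence $(\K_n,\J_n)_{n=0}^{N+1}$ is optimal, distinct validity regions correspond to distinct proper base sequences. Thus it suffices to count proper base sequences. I separate this data into the boundary configuration $(\K_0,\J_0,\K_{N+1},\J_{N+1})$ and the internal sequence $(\K_n,\J_n)_{n=1}^{N}$.

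For the boundary configuration, I would use the compatibility equations (\ref{eqn.compatible}). The $4(K+J)$ boundary variables collected from $\bu^0,x^0,\bp^N,q^N,\bu^N,\bx^N,\bp^0,\bq^0$ are partitioned by the boundary sets into exactly $2(K+J)$ that are forced to zero and $2(K+J)$ that are free. Hence the boundary sets are entirely encoded by the choice of which $2(K+J)$ boundary variables are zero, and there are at most $\binom{4(K+J)}{2(K+J)}$ such choices.

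For the internal sequence, each $(\K_n,\J_n)$ specifies an admissible basis of Rates-LP, and by Vandermonde the total number of such bases is at most $\binom{K+J}{K}$. The key structural fact I would invoke is that in a proper base sequence each basis appears at most once (consecutive bases differ by a single pivot with distinct entering and leaving variables, and revisiting a basis would contradict Assumption \ref{asm.nondeg} combined with the finite-pivot structure described in Section \ref{sec.structure}), and moreover the adjacency/admissibility conditions, together with the dynamics forcing a specific leaving variable at each breakpoint, pin down the order of the bases once the set of bases used is fixed. Granting this, the internal sequence is determined by the subset of bases used, and the number of such subsets is $2^{\binom{K+J}{K}}$.

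Multiplying the two counts yields $\binom{4(K+J)}{2(K+J)}\cdot 2^{\binom{K+J}{K}}$, which is the stated bound. The main obstacle is the claim that the ordering of internal bases is determined by the subset (equivalently, that each subset can appear in at most one proper base sequence); I would justify this by combining the adjacency property of consecutive bases with Assumption \ref{asm.nondeg}, which makes the leaving variable at each breakpoint unique, and thus the chain of bases uniquely reconstructable from the set. All other ingredients are bookkeeping from Section \ref{sec.structure}.
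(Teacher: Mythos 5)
Your decomposition into a boundary count and an internal count mirrors the paper's proof exactly, and the final arithmetic is the same. The boundary bound $\binom{4(K+J)}{2(K+J)}$ is justified the same way (choose which $2(K+J)$ of the $4(K+J)$ boundary variables are zero), and the total number of Rates-LP bases being $\binom{K+J}{K}$ is also shared.

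Where your argument has a genuine gap is in the justification that the internal sequence $(\K_n,\J_n)_{n=1}^N$ is determined by the \emph{set} of bases it uses. You claim that revisiting a basis would contradict Assumption~\ref{asm.nondeg}, and that adjacency plus a ``unique leaving variable'' pins down the order. Neither of these is actually forced by the hypotheses you cite: Assumption~\ref{asm.nondeg} only says $b$ and $c$ are in general position to $[A\;I]$ and $[A\Tt\; I]$, which makes each individual Rates-LP basis non-degenerate but says nothing directly about the sequence; and adjacency of consecutive bases is a local constraint that, on its own, permits cycles or different orderings of the same set (think of a subgraph of the basis graph that is not a path). The fact that actually powers this step --- and which the paper invokes --- is that the Rates-LP objective values are \emph{strictly decreasing} in $n$ along a proper base sequence (Corollary 4 of~\cite{weiss:08}, which does depend on Assumption~\ref{asm.nondeg}). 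This gives a canonical total order on the bases appearing in the sequence: each can appear at most once, and the sequence must list them in decreasing order of objective value. That is the precise reason a subset of bases determines at most one internal sequence, yielding the $2^{\binom{K+J}{K}}$ bound. Replace your ``adjacency + nondegeneracy forces the order'' reasoning with the monotonicity of the Rates-LP objective and the argument closes.
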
    
\begin{proof}
The total number of bases of the Rates-LP/LP$^*$ is $ \binom{K+J}{K}$. Moreover, for a proper base sequence, under the non-degeneracy assumption, the  objective values of the Rates-LP/LP$^*$ are strictly decreasing in $n$ (see Corollary 4 in \cite{weiss:08}) and hence the number of sequences $(\K_n, \J_n)_{n=1}^{N}$ is bounded by  $2^{\binom{K+J}{K}}$. The number of $(\K_n, \J_n)_{n \in \{0, N+1\}}$ is bounded by $\binom{4(K+J)}{2(K+J)}$ and hence the total number of proper base sequences is bounded by $\binom{4(K+J)}{2(K+J)}\; 2^{\binom{K+J}{K}}$.
 \rule{0.5em}{0.5em} \end{proof}
{\bf Note}, a simpler somewhat larger bound is  $2^{2^{K+J+1}}$.

\begin{theorem}
\label{thm.interior}
For a proper base sequence $(\K_n, \J_n)_{n=0}^{N+1}$ with validity region $\V$, and for boundary parameters $\rho$ in the interior of $\F$,  the following statements are equivalent: 

(i) $\rho$ is an interior point of $\V$,

(ii) $\rho$ does not belong to the validity region of any other proper base sequence,

(iii) the solution of (\ref{eqn.ex-coupled}) for the point $\rho$ is unique and satisfies $\bH_\setP>0$.
\end{theorem}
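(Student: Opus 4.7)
The plan is to establish the three-way equivalence by proving the cycle (ii) $\Rightarrow$ (i) $\Rightarrow$ (iii) $\Rightarrow$ (ii), combining the topological structure of the parametric feasible region, the affine dependence granted by Corollary \ref{thm.affine}, and the bijection between M-CLP/M-CLP$^*$ solutions and Boundary-LP/LP$^*$ solutions from Theorem \ref{thm.corresp}.

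For (ii) $\Rightarrow$ (i), I would argue the contrapositive. If $\rho \in \V$ is not an interior point of $\V$, then since $\V$ is a closed convex polyhedral cone, $\rho$ must lie on $\partial \V$. Because $\rho$ is in the interior of $\F$, every neighborhood of $\rho$ contains points of $\F$ that lie outside $\V$, and each such point must belong to some other validity region. By Proposition \ref{thm.number} there are only finitely many validity regions, so a pigeonhole argument produces a sequence in some $\V' \neq \V$ converging to $\rho$; closedness of $\V'$ then forces $\rho \in \V'$, contradicting (ii).

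For (i) $\Rightarrow$ (iii), I would invoke Corollary \ref{thm.affine}: each component of $\bH_\setP$ is an affine function of $\rho$ throughout $\V$, and $\V$ is carved out (given the structural requirements that define a proper base sequence) by the non-negativity inequalities $\bH_\setP \ge 0$. At an interior point all such inequalities must be strict, for otherwise a small perturbation of $\rho$ in the direction decreasing some vanishing component would leave $\V$; hence $\bH_\setP(\rho) > 0$. Uniqueness of the solution of (\ref{eqn.ex-coupled}) then follows from the converse direction of Corollary \ref{thm.affine}, which supplies a linear map $\bH_\setP \mapsto \rho$ that, together with $\rho \mapsto \bH_\setP$, forces the restriction of $M$ to the $\bH_\setP$ columns to be non-singular, so that the square linear system admits exactly one solution at $\rho$.

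For (iii) $\Rightarrow$ (ii), I would suppose the solution $\bH$ at $\rho$ is unique with $\bH_\setP > 0$ and assume, toward contradiction, that $\rho$ also belongs to the validity region $\V'$ of another proper base sequence $(\K'_n, \J'_n)_{n=0}^{N'+1}$. This second base sequence produces its own solution $\bH'$ at $\rho$ with zero set $\bH'_{\setZ'} = 0$ prescribed by $(\K'_n, \J'_n)$. Because the two base sequences differ, their zero sets differ, and combined with the strict positivity $\bH_\setP > 0$ this forces $\bH \neq \bH'$. Both vectors encode optimal M-CLP/M-CLP$^*$ solutions by the construction of Section \ref{sec.structure}, so Theorem \ref{thm.corresp} yields two distinct optimal Boundary-LP/LP$^*$ solutions at $\rho$, contradicting the uniqueness in (iii).

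The step that I anticipate will demand the most care is the uniqueness half of (i) $\Rightarrow$ (iii). Corollary \ref{thm.affine} as stated only yields two one-directional affine relations, and to harvest actual uniqueness of $\bH$ one must show these relations are mutual inverses on the relevant domain. The main work will be to leverage Assumption \ref{asm.nondeg} together with the boundary decomposition of Section \ref{sec.decomp} to rule out hidden degeneracies of the boundary LPs that could otherwise allow two distinct $\bH$ to satisfy (\ref{eqn.ex-coupled}) at an interior $\rho$.
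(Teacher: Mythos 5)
Your three-way cycle matches the paper's cycle (the paper proves (i)~$\Rightarrow$~(iii)~$\Rightarrow$~(ii)~$\Rightarrow$~(i) in that order), and your argument for (ii)~$\Rightarrow$~(i) is essentially the paper's: convexity of $\F$, closedness of the finitely many validity regions, and a sequence in a competing region converging to $\rho$. That piece is fine. However, both of the remaining implications have gaps.

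For (i)~$\Rightarrow$~(iii), you correctly flag that the uniqueness half is the delicate part, but the route you sketch does not close it. Corollary~\ref{thm.affine} is a statement \emph{about} the validity region once an optimal base sequence is fixed; invoking it to conclude that $M$ restricted to the $\bH_\setP$ columns is non-singular is circular, because the backward direction of that corollary is nothing more than $\bR = M\bH$ and says nothing about injectivity. The paper instead argues directly at the level of the square system (\ref{eqn.ex-coupled}): by Theorem D5.5(iii) all $\tau_n$ and all $x^0_k,\,k\in\K^=$, $q^N_j,\,j\in\J^=$ are already unique, so non-uniqueness is confined to the boundary components, which appear in rows whose right-hand sides carry entries of $\beta,\gamma,\lambda,\mu$. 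If $M$ were singular, some such row would be a linear combination of others with the same relation holding among the corresponding right-hand-side entries; but then one can perturb $\beta,\gamma,\lambda,\mu$ to break that relation, making the system inconsistent and contradicting interiority of $\rho$. The positivity of $\bH_\setP$ is obtained by a similar perturbation argument once $M$ is known to be invertible. Your sketch, which attempts to ``carve out'' $\V$ by the inequalities $\bH_\setP\ge 0$ treated as affine functions of $\rho$, silently presupposes exactly the uniqueness you are trying to prove.

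For (iii)~$\Rightarrow$~(ii), the detour through Theorem~\ref{thm.corresp} does not work. The vectors $\bH$ and $\bH'$ coming from two different proper base sequences generally have different lengths (if $N\ne N'$), so ``their zero sets differ, hence $\bH\ne\bH'$'' is not a meaningful comparison; and even when they can be compared, $\bH'$ might encode the \emph{same} M-CLP solution (e.g.\ by inserting zero-length intervals into $\bH$), so you would not get two distinct Boundary-LP solutions. Finally, (iii) is a uniqueness statement about the linear system for the fixed base sequence $(\K_n,\J_n)$, not about M-CLP or Boundary-LP solutions, so the contradiction you invoke is with the wrong object. The paper's argument is direct and does not pass through Theorem~\ref{thm.corresp}: since $\bH_\setP>0$, strict complementary slackness forces $(\K_n,\J_n)_{n\in\{0,N+1\}}$ to be the unique admissible boundary index sets; and since all $\tau_n>0$ and all the $x^n_k,q^n_j$ belonging to $\bH_\setP$ are strictly positive, at each breakpoint exactly one state variable is zero, so no alternative internal base sequence (in particular, none with zero-length intervals inserted before $B_1$, after $B_N$, or between $B_n$ and $B_{n+1}$) is possible. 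This rules out membership in any other validity region without any reference to the boundary-LP correspondence.
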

\begin{proof}  See Appendix \ref{sec.validityappendix}  \rule{0.5em}{0.5em} \end{proof}
Note that Theorem \ref{thm.interior} implies that interiors of validity regions are disjoint.

\begin{corollary}
\label{thm.nonsingular}
If a proper base sequence has a  validity region with non-empty interior, then the corresponding  matrix $M$ is non-singular.
\end{corollary}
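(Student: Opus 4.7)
The plan is to reduce everything to the equivalence (i)$\Leftrightarrow$(iii) of Theorem \ref{thm.interior}, and then invoke a one-line linear algebra fact about square matrices.

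First I would pick a point $\rho^\circ$ in the interior of $\V$, which exists by hypothesis. Since $\V \subseteq \F$ and a whole neighborhood of $\rho^\circ$ lies in $\V$, that neighborhood also lies in $\F$, so $\rho^\circ \in \inter(\F)$. This puts us exactly in the setting where Theorem \ref{thm.interior} applies. Applying the implication (i)$\Rightarrow$(iii) of that theorem, the linear system $M\bH = \bR(\rho^\circ)$ has a unique solution (and moreover $\bH_\setP > 0$, though we will not need this).

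Now $M$ is a square matrix of dimension $(J+K)(N+4)+N$. I would then argue by contradiction: suppose $M$ is singular. Then $\ker(M)$ contains some nonzero vector $v$, and if $\bH$ is any solution of $M\bH = \bR(\rho^\circ)$, then $\bH + v$ is a second, distinct solution of the same linear system, contradicting the uniqueness we just obtained. Therefore $M$ must be non-singular.

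There is essentially no obstacle here; the corollary is a direct consequence of Theorem \ref{thm.interior}(iii). The only point worth double-checking is that the uniqueness in Theorem \ref{thm.interior}(iii) refers to uniqueness of the solution of the full square system $M\bH = \bR$ (including the rows that force $\bH_\setZ$ to zero), so that uniqueness for \emph{some} right-hand side really does imply invertibility of $M$. Since those forced-zero constraints are built into $M$ as stated in the definition of (\ref{eqn.ex-coupled}), this is indeed the case, and the argument above goes through.
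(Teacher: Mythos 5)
Your proposal is correct and is essentially the paper's own argument: pick an interior point of $\V$ (hence of $\F$), invoke Theorem \ref{thm.interior} (i)$\Rightarrow$(iii) to get uniqueness of the solution of the square system (\ref{eqn.ex-coupled}), and conclude non-singularity of $M$ by the standard linear-algebra fact. The paper just states this in one line; you spelled out the kernel/contradiction step, but the reasoning is the same.
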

\begin{proof}
By Theorem \ref{thm.interior}, at an interior point of the validity region the solution of (\ref{eqn.ex-coupled}) is unique, and hence $M$ is non-singular. 
 \rule{0.5em}{0.5em} \end{proof}

\begin{corollary}
\label{thm.interior-unique}
If a point $\rho$ is an interior point of some validity region then the solutions of M-CLP and M-CLP$^*$ for this point are unique.
\end{corollary}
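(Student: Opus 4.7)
The plan is to combine the two equivalent conditions (ii) and (iii) supplied by Theorem \ref{thm.interior} with the structural result that every optimal M-CLP/M-CLP$^*$ solution arises from some proper base sequence. Concretely, assume $\rho$ lies in the interior of the validity region $\V$ of a proper base sequence $(\K_n,\J_n)_{n=0}^{N+1}$. I want to argue that every optimal primal and dual solution at $\rho$ must be produced by this particular base sequence, and that within this base sequence the data $\bH$ is forced.

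First I would invoke Theorem S3.1 (as extended at the end of Section \ref{sec.structure} to the $\lambda\le 0,\mu\ge 0$ formulation), which guarantees that any optimal solution of a feasible M-CLP/M-CLP$^*$ pair is generated by some proper base sequence via the construction $(\ref{eqn.lineq1})$--$(\ref{eqn.lineq2})$. Suppose, toward a contradiction, that there were two distinct optimal solutions at $\rho$. Either (a) they are generated by two different proper base sequences, or (b) they come from the same base sequence but yield different data vectors $\bH$. Case (a) would put $\rho$ into the validity region of a base sequence different from $(\K_n,\J_n)_{n=0}^{N+1}$, contradicting Theorem \ref{thm.interior}(ii). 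Case (b) would contradict Theorem \ref{thm.interior}(iii), which asserts that the solution of the square linear system $M\bH=\bR$ is unique at an interior point.

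It then remains to check that $\bH$ really does determine the entire primal-dual solution. The rates $u^n,p^n,\dx^n,\dq^n$ for $n=1,\ldots,N$ are the unique complementary slack basic solutions of Rates-LP/LP$^*(\K_n,\J_n)$ under non-degeneracy Assumption \ref{asm.nondeg}, so once the base sequence is fixed they are determined. The remaining free ingredients, namely the interval lengths $\tau_n$, the boundary impulses $\bu^0,\bu^N,\bp^0,\bp^N$, and the boundary/state values $x^0,\bx^N,q^N,\bq^0,x^n,q^n$, are exactly the entries of $\bH$, which by Theorem \ref{thm.interior}(iii) is unique. Feeding these into $(\ref{eqn.lineq2})$ recovers the unique piecewise linear $x(t),q(t)$ and unique piecewise constant $u(t),p(t)$, i.e.\ the unique optimal $U,P,x,q$.

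The only step that requires care is ruling out case (a): the argument needs Theorem \ref{thm.interior} in precisely the direction (i)$\Rightarrow$(ii), so that a base sequence distinct from $(\K_n,\J_n)_{n=0}^{N+1}$ cannot also claim $\rho$ in its validity region. Once that equivalence is available, the corollary is a short three-line consequence; without it, a priori one could imagine two proper base sequences sharing an interior point, and so I expect this reliance on Theorem \ref{thm.interior} to be the only real substance in the proof.
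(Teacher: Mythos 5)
Your overall strategy matches the paper's: combine the uniqueness of the base sequence (Theorem~\ref{thm.interior}(i)$\Rightarrow$(ii)), the uniqueness and strict positivity of $\bH_\setP$ (Theorem~\ref{thm.interior}(i)$\Rightarrow$(iii)), and the uniqueness of the rates, to conclude that the whole solution is forced. The paper's own proof is a one-liner citing Theorem~\ref{thm.interior} and Theorem~D5.5(iii), so you have reconstructed essentially the same argument, just in longer form.

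One caveat, though. Your case split (a)/(b) is exhaustive only if you already know that \emph{every} optimal solution at $\rho$ arises from a proper base sequence via the construction (\ref{eqn.lineq1})--(\ref{eqn.lineq2}). You attribute this to Theorem~S3.1, but as stated in the paper S3.1 is an \emph{existence} result: it says every feasible M-CLP/M-CLP$^*$ has \emph{an} optimal solution of that form, not that all optimal solutions have that form. So cited literally, S3.1 does not close off a hypothetical third case of an optimal solution not generated by any base sequence. The paper avoids this by invoking Theorem~D5.5(iii) directly, which asserts that under Assumption~\ref{asm.nondeg} the rates $u(t),p(t)$ on $(0,T)$ are unique across \emph{all} optimal solutions; combined with the unique $\bH$ from Theorem~\ref{thm.interior}(iii) (and the one-to-one correspondence of Theorem~\ref{thm.corresp} to handle the boundary impulses), this forces every optimal solution to coincide with the constructed one. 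You do mention the uniqueness of the rates under non-degeneracy in your third paragraph, so the ingredient is implicitly present, but the logical ordering is off: you should be using D5.5(iii) to rule out the ``third case'' \emph{before} the (a)/(b) dichotomy, rather than appealing to S3.1 for a universality it does not assert.
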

\begin{proof}
By Theorem \ref{thm.interior}  the sequence of bases is unique, $u(t),p(t),\dx(t),\dq(t)$ are unique by Theorem D5.5(iii), and so $M$ is unique, and the solution is unique.
 \rule{0.5em}{0.5em} \end{proof}


 \section{Collisions}
 \label{sec.collisions}
In this section we describe how the solution changes as the boundary parameters move from an internal point of a validity region to a boundary point of this validity region.  We consider a validity region $\V$ with non-empty interior
and base sequence $(\K_n, \J_n)_{n=0}^{N+1}$.  Let $\L(\theta)$ be a parametric line of boundary parameters, with $\L(\theta)$ in the interior of $\V$ for $\theta < \otheta$, $\L(\otheta)$ on the boundary, with $H_\setP(\theta)$ the solution at  $\L(\theta)$.  Then $H_\setP(\theta)>0$ for $\theta < \otheta$, but some elements of $H_\setP$ shrink to 0 as $\theta \nearrow \otheta$.  We call this {\em a collision}.

We now list all types of collisions.  
For convenience we number them  (a), (b), $\ldots$, (f) .

\paragraph{Internal collisions}
\begin{compactdesc}
\item[(a)]{\bf  State collision.} A single value $x^n_k, n=1,\ldots,N$, or $q^n_j, n=0,\ldots,N-1$, shrinks to $0$ (see Figure \ref{fig.cola}). This is classified in \cite{weiss:08} as SCLP-III type collision.
\begin{figure}[h!t] 
\centering 
\includegraphics[width=4.6in]{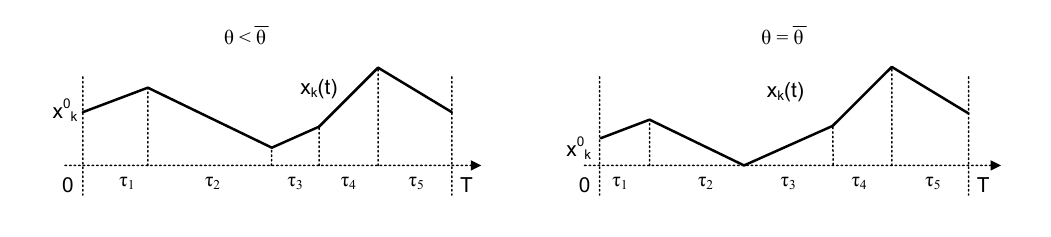} 
\caption[Type (a) collision, $x^2_k$ shrinks to $0$.]{Type (a) collision, $x^2_k$ shrinks to $0$.} 
\label{fig.cola} 
\end{figure}
\item[(b)]{\bf Interval collision between non-adjacent bases.} A sequence of one or more internal interval lengths, $\tau_{n_{*}},\ldots,\tau_{n_{**}}$ shrink to 0, where $1 < n_* \le n_{**} < N$ and 
$|B_{n_*-1} \setminus B_{n_{**}+1} | =2$, i.e. the basis preceding $n_*$ and the 
basis succeeding $n_{**}$ are not adjacent.  This means that at the collision point on the boundary, at time $t = t_{n_*}= \cdots = t_{n_{**}}$  two different state variables hit the value 0 ($x_k(t)$ and $x_{k'}(t)$, or $q_j(T-t)$ and $q_{j'}(T-t)$ or  $x_k(t)$ and $q_j(T-t)$ (see Figure \ref{fig.colb})).  
This is  classified in \cite{weiss:08} as SCLP-II type collision.
\begin{figure}[H] 
\centering 
\includegraphics[width=4.6in]{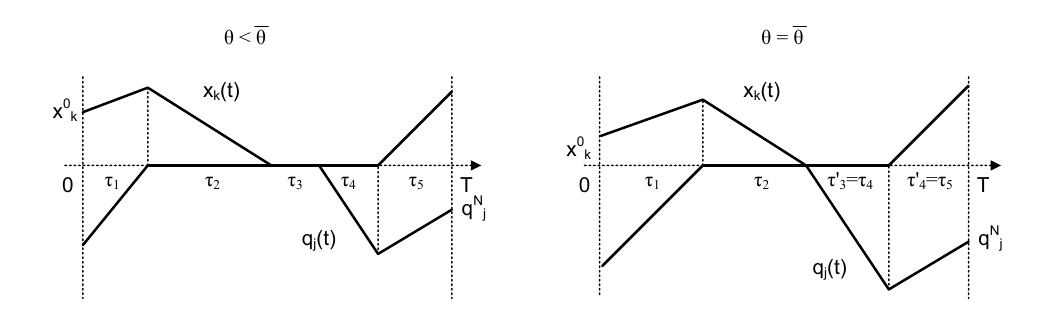} 
\caption[Type (b) collision, $\tau_3$ shrinks to $0$.]{Type (b) collision, $\tau_3$ shrinks to $0$.} 
\label{fig.colb} 
\end{figure}

\item[(c)]{\bf Interval collision between adjacent bases or at 0 or $T$,  
variable does not become free.} 
Includes the following three cases:  
(i) A sequence of one or more  interval lengths, $\tau_{n_{*}},\ldots,\tau_{n_{**}}$ shrink to 0, where $1 < n_* \le n_{**} < N$ and 
$|B_{n_*-1} \setminus B_{n_{**}+1} | =1$, (ii) intervals $\tau_1,\ldots,\tau_{n_{**}},$ $1\le n_{**}<N$ shrink to zero, (iii) intervals $\tau_{n_*},\ldots,\tau_N,\,1<n_*\le N$ shrink to zero.  In  case (i), a state variable $x_k$ (or a dual state variable  $q_j$) is positive in the intervals $t_{n_*-1} < t < t_{n_*}$ and $t_{n_{**}} <t < t_{n_{**}+1}$, but it is 0 at $t_{n_*}$ and at $t_{n_{**}}$.  In the  case (ii) the collision is at $t= 0$ and a dual state variable $q_j$ in positive in  $t_{n_{**}} <t < t_{n_{**}+1}$ but it is 0 at $t_{n_{**}}$.  In  case (iii) the collision is at  $t=T$, and a  state variable $x_k$ in positive in  $t_{n_*-1} <t < t_{n_*}$ but it is 0 at $t_{n_*}$.   
  These collisions are classified as SCLP-I type collisions in \cite{weiss:08}.
\begin{figure}[H] 
\centering 
\includegraphics[width=4.6in]{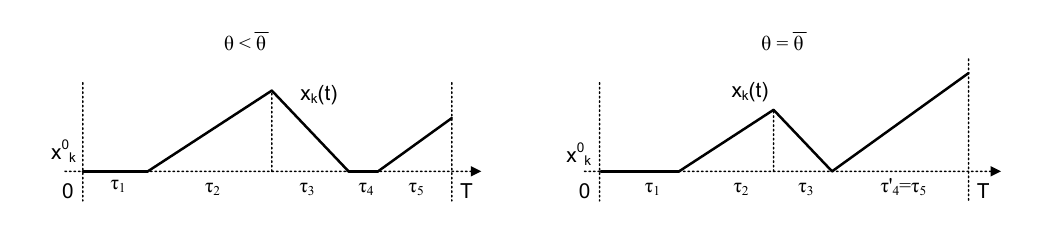} 
\caption[Type (c) collision, $\tau_4$ shrinks to $0$, $x_k$ remains tied.]{Type (c) collision, $\tau_4$ shrinks to $0$, $x_k$ remains tied ($n_*=n_{**}= 4$).} 
\label{fig.colc} 
\end{figure}

In addition, we also assume that in validity region $\V$, for the state variable $x_k(t)$ (or $q_j(t)$) there is 
at least one  time interval in $[0,t_{n_*}]$ or in  $[t_{n_{**}}, T]$  such that $x_k(t)=0$ (or $q_j(T-t)=0$) in that interval.   This implies that the variable $x_k(t)$ (or $q_j(t)$) would be tied even if $x_k(t)$  (or $q_j(T-t)$) would be positive in the whole interval $[t_{n_*},t_{n_{**}}]$.  This additional condition is what is meant by saying the variable involved in the collision does not become free.  In Figure \ref{fig.colc} a single interval  $\tau_{n^*}=\tau_{n^{**}} = \tau_4$ shrinks to zero, but the variable $x_k(t)$ remains tied, since it satisfies $x_k(t)=0,\,0<t<t_1$.

\end{compactdesc}

 \paragraph{Collisions that involve boundary values}
\begin{compactdesc}
\item[(d)] {\bf Interval collision between adjacent bases or at 0 or $T$,  
variable become free.}
This is the same as (c) except that the variable $x_k$ (or  $q_j$) become free in the sense that in the interior of the validity region $\V$,  in the time intervals $(0,t_{n_*})$ and $(t_{n_{**}}, T)$,  $x_k(t) >0$ (or $q_j(T-t)>0$)
(see Figure \ref{fig.cold}).  
\begin{figure}[H] 
\centering 
\includegraphics[width=4.6in]{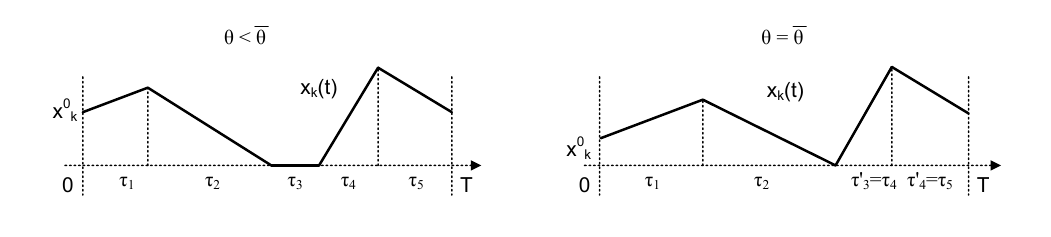} 
\caption[Type (d) collision, $\tau_3$ shrinks to $0$, $x_k$ becomes free.]{Type (d) collision, $\tau_3$ shrinks to $0$, $x_k$ becomes free.} 
\label{fig.cold} 
\end{figure}

\item[(e)]{\bf Boundary collision.}
One of the $>0$ boundary values including any of $\bu^0_j,\bu^N_j,\bp^0_k,\bp^N_k,\bx_k^N,$ $\bq_j^0$ shrinks to zero, or one of the values $x_k^0,\, k\in \K^{\uparrow\downarrow}$ or 
$q_j^N,\, j\in \J^{\uparrow\downarrow}$, shrinks to 0, or one of the values of $x_k^0,\, k\in \K^=$  for which $\dx_k^1 >0$ (see Figure \ref{fig.cole}) or one of the values of $q_j^N,\, j\in \J^=$  for which $\dq_j^N >0$ shrinks to 0.  
\begin{figure}[H] 
\centering 
\includegraphics[width=4.6in]{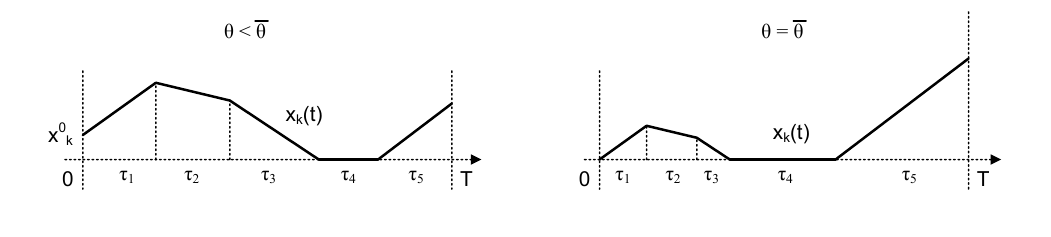} 
\caption[Type (e) collision, $x^0_k$ shrinks to $0$.]{Type (e) collision, $x^0_k$ shrinks to $0$.} 
\label{fig.cole} 
\end{figure} 

\item[(f)]{\bf Joint collision.}
One of the values of $x_k^0,\, k\in \K^=$  for which $\dx_k^1  <  0$  shrinks to 0, and as a result the intervals $\tau_1,\ldots,\tau_{n_{*}}$ shrink to zero, where in the interior of the validity region $x_k(t) > 0$ for $0 < t < t_{n_{*}}$, and $x_k(t_{n_{*}})=0$ (see Figure \ref{fig.colf}).  
\begin{figure}[h!t] 
\centering 
\includegraphics[width=4.6in]{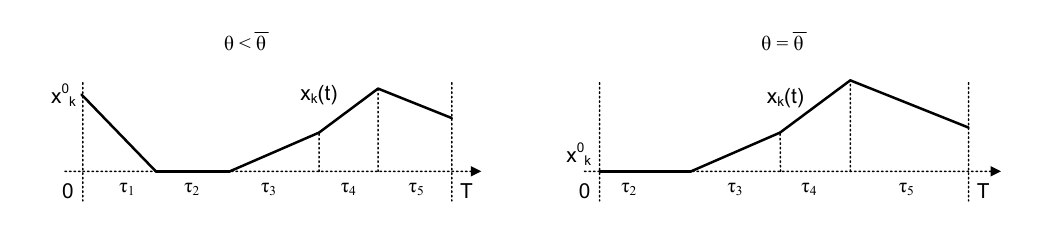} 
\caption[Type (f) collision, $x^0_k$ and $\tau_1$ shrink to $0$.]{Type (f) collision, $x^0_k$ and $\tau_1$ shrink to $0$.} 
\label{fig.colf} 
\end{figure} 
Alternatively,  one of the values of $q_j^N,\, j\in \J^=$  for which $\dq_j^N < 0$ shrinks to 0, and as a result the intervals $\tau_{n_{**}},\ldots,\tau_N$ shrink to zero, where in the interior of the validity region $q_j(T-t) > 0$ for $t_{n_{**}-1} < t < T$, and $q_j(T- t_{n_{**}-1})=0$.
\end{compactdesc}

\paragraph{Multiple collisions\\}
Any collision that is not one of the types (a)--(f) is {\em a multiple collision}.  Such collisions consists of a combination of several collisions of types (a)--(f). We will exclude the possibility of multiple collisions in Sections \ref{sec.pivot}--\ref{sec.example2}, but will return to discuss them in Appendix \ref{sec.general}.


\section{M-CLP pivots}
\label{sec.pivot} 
An M-CLP pivot uses the solution in validity region $\V$ with non-empty interior to construct the solution in a neighboring validity region with non-empty interior $\W$.  Let $\L(\theta),\,0<\theta<1$ be a parametric line of boundary parameters, such that $\L(\theta) \in \mbox{interior of } \V, \, \theta < \otheta$, $\L(\theta) \in \mbox{interior of } \W, \, \theta > \otheta$, and $\L(\otheta)$ is on the boundary of $\V$ and of $\W$. 
  We assume that at the collision point  $\L(\otheta)$,
 the collision from $\V$ as well as the collision from $\W$ are both of types (a)--(f).
%
In an M-CLP pivot we will start from the region $\V$ with optimal base sequence $(\K_n,\J_n)_{n=0}^{N+1}$, and with a specified type of collision, and we will show how to retrieve the collision from the other side, and from it construct the base sequence $(\K'_n,\J'_n)_{n=0}^{N'+1}$  optimal in $\W$.

There are three types of pivots:  
\begin{compactitem}[-]
\item
Pivots that involve only changes from $(\K_n,\J_n)_{n=1}^{N}$ to $(\K'_n,\J'_n)_{n=1}^{N'}$, which are pivots on the Internal-SCLP, similar to SCLP pivots described in \cite{weiss:08}.  We refer to these as internal pivots.
\item
Pivots that involve changes from $(\K_n,\J_n)_{n\in \{0,N+1\}}$ to  $(\K'_n,\J'_n)_{n\in \{0,N'+1\}}$ in which the solution at the collision point is unique.  We refer to these as boundary pivots of type I.
\item
Pivots that involve changes from $(\K_n,\J_n)_{n\in \{0,N+1\}}$ to  $(\K'_n,\J'_n)_{n\in \{0,N'+1\}}$ in which the solution at the collision point is not unique.  We refer to these as boundary pivots of type II.
\end{compactitem}

We now describe all the possible types of pivots:

\subsubsection*{Pure Internal-SCLP pivots}
\label{sec.int_pivot}
When the collision from validity region $\V$ is of types (a), (b), or (c) the pivot is an internal pivot.
\begin{compactitem}[-]
\item
Collision of type (a):  A single state variable  $x^n_k$, or $q^n_j$ shrinks to 0.  This is classified as  SCLP-III type collision in \cite{weiss:08}.  In this pivot new internal bases $D_1,\ldots,D_M$ are inserted  before $B_1$ (if $n = 0$), or after $B_{N}$ (if $n = N$), or between $B_n$ and $B_{n+1}$ (otherwise).  At the collision point these new bases have interval length 0, and they become $>0$ in the interior of $\W$.  The collision from $\W$ is of type (c) if the shrinking state variable is tied, or of type (d) if it is free. 
\item
Collision of type (b):  Intervals $\tau_{n^*},\ldots,\tau_{n^{**}},\,1<n^*\le n^{**} < N$ shrink to zero, with $|B_{n_*-1} \setminus B_{n_{**}+1} | =2$.  This is classified as  SCLP-II type collision in \cite{weiss:08}. 
The internal pivot involves removing the bases  $B_{n_*},\ldots,B_{n_{**}}$, and inserting new bases $D_1,\ldots,D_M,\,M\ge 1$ in their place.  At the collision point these new bases have interval length 0, and they become $>0$ in the interior of $\W$.  The collision from $\W$ is also of type (b).
\item
Collision of type (c):  Intervals $\tau_{n^*},\ldots,\tau_{n^{**}}$ shrink to zero, where either $1<n^*\le n^{**} < N$ with $|B_{n_*-1} \setminus B_{n_{**}+1} | =1$, or else $n^*=1$, or else  $n^{**}=N$, but no tied state variable becomes free.  This is classified as  SCLP-I type collision in \cite{weiss:08}.  The internal pivot involves removing the bases  $B_{n_*},\ldots,B_{n_{**}}$.  The collision from $\W$ is  of type (a).
\end{compactitem}

\subsubsection*{M-CLP pivots that involve boundary pivots}
\label{sec.bound_pivot}
This includes  collisions of type (d), (e), (f) from the $\V$ side,
and the pivot consists of up to three steps: it may require an initial Internal-SCLP pivot (pre-boundary step), it may then require a  pivot on the boundary dictionary (boundary step), and it may finally require another  Internal-SCLP pivot (post-boundary step).

The pre-boundary step is determined by the type of collision on the $\V$ side.  
Once pre-boundary SCLP pivot is complete the boundary pivot is carried out as follows:  First  the Boundary-LP simplex dictionary for $\V$ is constructed.  Then the leaving variable is determined according to the type (d), (e) or (f).  It is then determined if the pivot is type I or type II.  If it is type I no pivot is performed, but the bases  $(\K_n,\J_n)_{n\in \{0,N+1\}}$ may change.    If it is type II, a pivot of the boundary dictionary is performed, and the new bases $(\K'_n,\J'_n)_{n\in \{0,N'+1\}}$
are obtained.  This also determines the type of collision from the $\W$ side.  Finally, according to the type of collision from the $\W$ side a post-boundary SCLP pivot may be performed.  We now describe these steps.

\paragraph{The pre-boundary step}
\begin{compactitem}[-]
\item
Collision of type (d) or (f):  The bases of intervals that shrink to 0 are removed.
\item
Collision of type (e):  No pre-boundary pivot is required.
\end{compactitem}
We denote by $(\tcK_n, \tcJ_n)_{n=1}^{N''}$ the base sequence obtained from $(\K_n,\J_n)_{n=1}^{N}$ after the pre-boundary step, and refer to it as the {\em  intermediate base sequence}.

\paragraph{Constructing the Boundary-LP simplex dictionary for $\V$:}
In all three cases we follow the rules detailed in Section \ref{sec.dictionary}.  
In case (d) we need some additional rules:
If  $x_k$ became free then if  $ |\J^{\uparrow\downarrow}| + |\overline{\K_{N+1}}|+|\J_0| + |\overline{\K_0}|<2J$  we choose $\bp_k^0$ as a dual basic in $\bB^*$, and if  $ |\J^{\uparrow\downarrow}| + |\overline{\K_{N+1}}|+|\J_0| + |\overline{\K_0}|=2J$, we choose $x_k^\bullet$ as basic in $\bB$.  Similarly, if $q_j$ becomes free then if $|\K^{\uparrow\downarrow}| + |\overline{\J_0}| + |\K_{N+1}| + |\overline{\J_{N+1}}| <2K$ we choose $\bu_j^N$ as basic in $\bB$, and if  $|\K^{\uparrow\downarrow}| + |\overline{\J_0}| + |\K_{N+1}| + |\overline{\J_{N+1}}| =2K$  we choose $q_j^\bullet$ as dual basic in $\bB^*$.
We denote the Boundary-LP simplex dictionary for $\V$ by $\D$.

\paragraph{Determining the leaving variable}
In cases (e), (f) the variable leaving the basis is the boundary variable that has shrunk to 0 in the collision from $\V$.  We denote it by $\bv$, and it can be either a primal variable or a dual variable.  We denote its dual variable by $\bv^*$.  

In case (d), if $x_k$ has become free and $x_k^\bullet$ is basic, or if $q_j$ has become free and $q_j^\bullet$ is basic, then no variable leaves the basis and there is no pivot. We classify this case as a pivot of type I.
 Otherwise $\bv=\bp_k^0$ or $\bv=\bu_j^N$ respectively.   

\paragraph{Determine the type of boundary pivot and the entering variable}
We have the  following possibilities:  
\begin{compactitem}[-]
\item
If $\bv$ is a primal (or a dual) variable, examine the dual (or the primal) variables which are $\bw^* = 0$.  If any of them has a non-zero pivot element (i.e. if $\bv$ is in row $i$ and $\bw^*$ is in column $j$, the element $\hbA_{ij}$)  then this is a type I pivot.
%
\item
Otherwise the entering variable $\bw$ is determined, as in standard LP, by  the ratio test (this is the ratio test of the simplex method of linear programming, to determine which variable becomes tight and leaves the primal or the dual basis).
\begin{itemize}
\item
If there is a single candidate variable $\bw$ to enter with ratio $>0$ then we have a pivot of type II and $\bw$ is the entering variable.
\item
If there are multiple candidates to enter with ratio $>0$, then there are several variables shrinking to $0$ on the $\W$ side.  This indicates that there is a multiple collision  from the $\W$ side. We discuss multiple collisions in Appendix \ref{sec.general}.
\end{itemize}
\end{compactitem}

\paragraph{Boundary-pivot of type I:}
The solution at the collision point is unique, and there is no need for a pivot on the dictionary.  In the case of collision type (d), when $\bv=\bp_k^0$ (or $\bv=\bu_j^N$) we have $(\K_n,\J_n)_{n \in \{0, N+1\}}=(\K'_n,\J'_n)_{n \in \{0, N'+1\}}$, and the only change is that $x_k$ (or $q_j$) is no longer tied but becomes free in $\W$. 
 Otherwise (in case of collision type (e) or (f)), the solution at the collision point is unique, but $(\K_n,\J_n)_{n \in \{0, N+1\}}$ $ \ne (\K'_n,\J'_n)_{n \in \{0, N'+1\}}$.  
 
\paragraph{Boundary-pivot of type II:}
Assume $\bv$ leaving and $\bw$ entering.  Let $\bv^*$, $\bw^*$ be their dual variables.  Then the effect of the pivot is as follows:  Because the leaving variable $\bv=0$, after the pivot the new basic variable $\bw=0$.  However, since the test ratio is $>0$, the variable $\bw^*$ that leaves the dual basis is $>0$, and when it leaves the basis its value will jump down to 0.  Also, the variable entering the dual basis $\bv^*$ will jump from the non-basic value of 0 to a value $>0$.  In addition, all the basic boundary variables in the same category  as $\bw^*,\bv^*$ (that is in $\bB^*$ or in $\bB$ according to whether $\bv$ is primal or dual) will change their values.
\paragraph{Determine the variable shrinking to $0$ from the $\W$ side:}
From the $\W$ side we may have a single boundary variable shrinking to $0$, which we denote by $\bv'$, or there may be no boundary value that shrinks to 0 and we then define  $\bv' = \emptyset$.
\begin{compactitem}[-]
\item
 If the pivot is type I and the collision from $\V$ is (d) then $\bv' = \emptyset$.
\item 
If the pivot is type I and the collision from $\V$ is (e) or (f) then $\bv' = \bv^*$.
\item
In case of boundary pivot of type II,  if $\bw^*=x^\bullet_{k'}, \tx_{k'} > 0$ (or if $\bw^*=q^\bullet_{j'}, \tq_{j'} > 0$) then $\bv'= \emptyset$.
\item
Otherwise, for pivot of type II, $\bv' = \bw$.
\end{compactitem}
\paragraph{Determine the new boundary basis:}
To determine the new boundary bases $(\K'_n,$ $\J'_n)_{n \in \{0, N'+1\}}$ we first find the new values of $x^0, q^N$, from the new values of $x^\bullet$, $q^\bullet$ after the pivot, using $x^0 = x^\bullet+ \tx, q^N = q^\bullet + \tq$.  Recall that $\tx,\,\tq$ are uniquely determined at $\L(\otheta)$ by  Proposition \ref{thm.uniqueparams}. 

Following this step we define $(\K'_n,\J'_n)_{n \in \{0, N'+1\}}$ as follows:
\begin{equation}
\begin{array}{lllll}
j \in \J'_0  &  \text{if } \bq^0_j>0, \text{ or } \bv' = \bq^0_j, &                      \qquad k \in \K'_{N+1} &  \text{if } \bx^N_k>0, \text{ or } \bv' = \bx^N_k,  \\
k \in \K'_0 &  \text{if } x^0_k>0, \text{ or } \bv' = x_k^\bullet, &                          \qquad   j\in \J'_{N+1}  &  \text{if } q^N_j>0, \text{ or } \bv' = q_j^\bullet.
\end{array}
\end{equation}
This simply includes all $>0$ boundary values, and the variable $\bv'$ (if $\bv'\ne\emptyset$), and includes exactly $2(K+J)$ elements.
\paragraph{Test for a multiple collision from the $\W$ side:}
If $\bw^*=x_k^\bullet$ or  $\bw^*=q_j^\bullet$ and at  $\L(\otheta)$ the minimum of $x_k(t)$ or $q_j(t)$ is obtained in more than one point, as seen from $|\arg\min_{0 \le n \le N}  x^n_k|  >1$ (or from $|\arg\min_{0\le n \le N} q^n_j|  >1$), then there is a multiple collision from the $\W$ side.  This is illustrated in 
Figure \ref{fig.multiple_post} where $x_k^\bullet (\theta)$ denotes the value of the optimal $x_k^\bullet$ for parameters $\L(\theta)$.
\begin{figure}[h!t] 
\centering 
\includegraphics[width=4.6in]{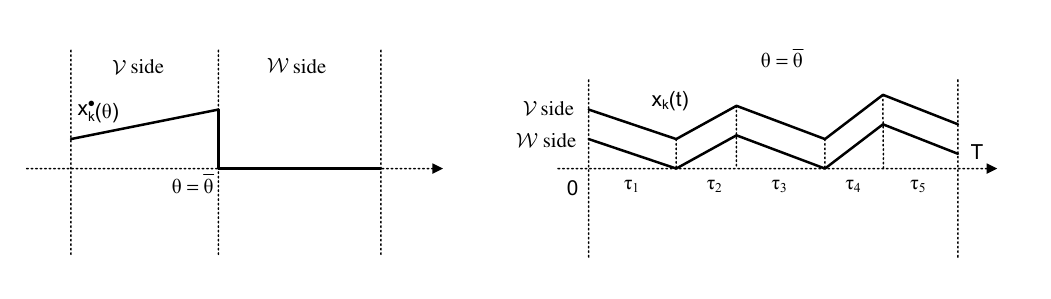} 
\caption{Boundary pivot type II that creates multiple collision: $\bw^*=x_k^\bullet$.} 
\label{fig.multiple_post} 
\end{figure} 

\paragraph{Determine the type of collision from the $\W$ side:} We assume that   no multiple collision is discovered in the previous step. 
\begin{compactitem}[-]
\item
If $\bv'=\emptyset$ and the boundary pivot is of type I, the collision from the $\W$ side is type (a).
\item
If $\bv'=\emptyset$ and the boundary pivot is of type II, the collision form the $\W$ side is type (d).
\item
If $\bv'=x^\bullet_{k'}$ and $\K'_0$ is not compatible with $\tcK_1$ (of the intermediate base sequence), or if $\bv'=q^\bullet_{j'}$ and $\J'_{N+1}$ is not compatible with $\tcJ_{N}$, then the collision from the $\W$ side is of type (f). 
\item
For any other $\bv' \ne \emptyset$, the  collision from the  $\W$ side  is of type (e)
\end{compactitem}

\paragraph{The post-boundary step}
In the post-boundary step we perform an SCLP pivot and build a base sequence $(\K'_n,\J'_n)_{n=1}^{N'}$ that is optimal on the $\W$ side.
\begin{compactitem}[-]
\item
If the collision from the $\W$ side is of type (d), and $\bw^*=x_k^\bullet$ and $t_n$ is the minimum point of $x_k(t)$ at $\L(\otheta)$, or $\bw^*=q_j^\bullet$ and $t_n$ is the minimum point of $q_j(T-t)$ at $\L(\otheta)$, the bases $D_1,\ldots,D_M$ need to be inserted between $\tilde{B}_n,\tilde{B}_{n+1}$, in the sequence $(\tcK_n,\tcJ_n)_{n=1}^{N''}$ as in SCLP-III type collision in \cite{weiss:08}. 
\item
If the collision from the $\W$ side is of type (f), then if $\bv'= x_k^\bullet$, then a number of bases $D_1,\ldots,D_M$ are inserted before $\tilde{B}_1$, and if 
$\bv'= q_j^\bullet$, then a number of bases $D_1,\ldots,D_M$ are inserted after $\tilde{B}_{N''}$, as in SCLP-IV type collision in \cite{weiss:08}. 
\end{compactitem}

\noindent
We now formulate the following two theorems that state the conditions under which  the pivot operation is well defined and constructs an optimal solution for the neighboring region $\W$.
\begin{theorem}
\label{thm.boundary-unique}
If the collisions on both the $\V$ and the $\W$ side are of types (a)--(f) then the  M-CLP pivot constructs a unique base sequence $(\K'_n,\J'_n)_{n=0}^{N'+1}$.
\end{theorem}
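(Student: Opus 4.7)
My plan is to verify uniqueness by traversing the pivot procedure step by step, checking that at each stage the output is forced by the data on either side of $\L(\otheta)$. I would split the argument into three cases according to the flavor of pivot: pure internal (collision on $\V$-side of type (a), (b) or (c)), boundary of type I, and boundary of type II. In each case I would establish that (i) the intermediate base sequence obtained after the pre-boundary step is unique, (ii) the leaving variable $\bv$ and, when relevant, the entering variable $\bw$ are uniquely determined, (iii) the new boundary pair $(\K'_n,\J'_n)_{n\in\{0,N'+1\}}$ is uniquely determined, and (iv) the post-boundary step produces a unique internal base sequence $(\K'_n,\J'_n)_{n=1}^{N'}$.

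For the pure internal case I would appeal to the corresponding SCLP-I, SCLP-II and SCLP-III pivot constructions in \cite{weiss:08}, which are proved there to be well-defined and unique; the boundary pair is untouched by these pivots, so uniqueness transfers. For the boundary cases the pre-boundary step only removes the bases of the intervals that shrink to $0$, and which intervals shrink is dictated by the collision type from the $\V$ side, so $(\tcK_n,\tcJ_n)_{n=1}^{N''}$ is uniquely determined. The leaving variable $\bv$ is then read off from the collision type: in (e) and (f) it is the single boundary value shrinking to $0$, and in (d) it is either empty (type I with $x^\bullet_k$ or $q^\bullet_j$ basic) or the unique $\bp^0_k$ or $\bu^N_j$ prescribed in Section \ref{sec.bound_pivot}. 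The type (I vs. II) is likewise forced by whether any dual variable currently at $0$ has a non-zero pivot entry in row $\bv$, and in the type II case, the assumption that the $\W$-collision is among (a)--(f) (so in particular not multiple) guarantees a unique entering variable $\bw$ via the ratio test.

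The core technical step, and the place where I expect the most work, is to show that the computed $(\K'_0,\J'_0)$ and $(\K'_{N'+1},\J'_{N'+1})$ are independent of the non-canonical choices allowed in Section \ref{sec.dictionary} when building the compatible bases $\bB,\bB^*$. The rule defining the new boundary pair,
\[
j\in\J'_0\iff \bq^0_j>0 \text{ or } \bv'=\bq^0_j,\quad k\in\K'_0\iff x^0_k>0 \text{ or } \bv'=x^\bullet_k,
\]
and its dual, refers only to (a) the values of the boundary variables of M-CLP/M-CLP$^*$ on the $\W$ side at $\L(\otheta)$ and (b) the identity of the single variable $\bv'$. For (a), I would use Theorem \ref{thm.corresp}, which identifies the optimal $\bu^0,\bu^N,x^\bullet,\bx^N$ (and the dual analogues) with the unique optimal M-CLP/M-CLP$^*$ solution at $\L(\otheta)$; these values are therefore independent of the chosen dictionary. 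For (b), the procedure for determining $\bv'$ refers to $\bw$ and to the sign of $\tx_{k'}$ or $\tq_{j'}$, both of which are uniquely defined by Proposition \ref{thm.uniqueparams}; in the type I cases $\bv'$ is either $\emptyset$ or $\bv^*$, both already unique.

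Finally, for the post-boundary step, when the $\W$-side collision is of type (d) or (f), the bases $D_1,\ldots,D_M$ are inserted exactly as in the SCLP-III and SCLP-IV constructions of \cite{weiss:08}, where uniqueness is established. Putting all pieces together, $(\K'_n,\J'_n)_{n=0}^{N'+1}$ is uniquely determined by $(\K_n,\J_n)_{n=0}^{N+1}$ and the two collision types. The main obstacle, as signaled above, is the bookkeeping required to show invariance of the output under the choices in the Boundary-LP dictionary construction; I expect this to require a short lemma stating that any two compatible dictionaries for the same solution are related by pivots that permute only variables all of whose relevant values (either $0$ on both sides, or strictly positive on both sides) agree, so that the $>0$ criterion used to define $(\K'_n,\J'_n)_{n\in\{0,N'+1\}}$ is preserved.
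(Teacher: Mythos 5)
Your plan correctly isolates the two things that have to be checked: that the $\W$-side collision is not multiple (which you correctly attribute to the hypothesis), and that the construction does not depend on the non-canonical choices in building the compatible pair $(\bB,\bB^*)$. The paper's own proof is organized around exactly these two points, and your case analysis (pure internal vs. boundary type I vs. boundary type II) matches the paper's. So the overall architecture is right.

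The gap is in how you handle the second point. Your argument that the new boundary values are dictionary-independent ``because Theorem \ref{thm.corresp} identifies them with the unique optimal M-CLP/M-CLP$^*$ solution at $\L(\otheta)$'' does not go through: at the collision point $\L(\otheta)$ the M-CLP solution is generally \emph{not} unique. Indeed that is precisely the situation a type II boundary pivot is designed to handle --- the $\V$-side limit and the $\W$-side limit of the boundary values are distinct vertices of the same optimal face of Boundary-LP, and Theorem \ref{thm.corresp} only gives a bijection between the optimal faces, not a singleton. Saying the post-pivot boundary values ``are'' the $\W$-side limit values is precisely what we would like to conclude, and it is the content of Theorem \ref{thm.facet}; invoking it here is circular. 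Similarly, you cannot yet appeal to Corollary \ref{thm.interior-unique} on the $\W$ side, since before well-definedness of the pivot is established you have not yet identified the $\W$-side base sequence.

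The lemma you flag at the end --- that any two compatible boundary dictionaries for the $\V$-side solution are related by degenerate pivots on cells $\hbA_{k,l}$ with $\bv_k=0$, $\bw_l^*=0$, and that such pivots preserve the outcome of the leaving/entering computation --- is not a short bookkeeping lemma; it is the entire substance of the proof, and the paper proves it directly, without appealing to semantics of the $\W$ side. Concretely: if $\bv=\bv_i$ is the (primal) leaving variable in row $i$, one distinguishes (a) the case $\hbA_{i,l}=0$ for every zero-valued dual column $l$, where each degenerate pivot on $\hbA_{k,l}$ fixes row $i$ and all dual values so the ratio test yields the same entering variable in every compatible dictionary; and (b) the case where some $\hbA_{i,l}\neq 0$ with $\bw^*_l=0$, which forces the pivot to be of type I in every compatible dictionary, so the outcome is $\bv$ leaves and $\bv^*$ enters regardless. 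You should replace the appeal to Theorem \ref{thm.corresp} with this direct combinatorial argument; as it stands, the step you describe as ``a short lemma'' is where the proof actually lives.
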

\begin{proof}  See Appendix \ref{sec.uniqueproof}  \rule{0.5em}{0.5em} \end{proof}

\begin{theorem}
\label{thm.facet}
If the collisions on both the $\V$ side as $\theta \nearrow \otheta$ and on the $\W$ side as $\theta \searrow \otheta$ are both of types (a)--(f) then 
 the base sequence $(\K'_n,\J'_n)_{n=0}^{N'+1}$ constructed by the M-CLP pivot is the optimal base sequence in $\W$.
\end{theorem}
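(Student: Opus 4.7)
The plan is to reduce the theorem to the characterization of optimal M-CLP/M-CLP$^*$ solutions via proper base sequences (Theorem S3.1): it suffices to show that the sequence $(\K'_n,\J'_n)_{n=0}^{N'+1}$ constructed by the M-CLP pivot is proper (admissible, adjacent, compatible), and that when plugged into the linear system (\ref{eqn.ex-coupled}) at some $\rho$ just inside $\W$ it produces a solution with $\bH_\setP \ge 0$. Combined with Theorem \ref{thm.interior}, this will establish that $(\K'_n,\J'_n)_{n=0}^{N'+1}$ is the optimal base sequence throughout $\W$.

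The natural route is through the decomposition of Theorem \ref{thm.moddecomp}. By Proposition \ref{thm.uniqueparams}, the parameters $\tU,\tP,\tx,\tq$ depend affinely on $\rho$, so as $\theta\searrow\otheta$ they agree with their values at $\L(\otheta)$ obtained from the $\V$ side. I would therefore verify the internal and boundary components separately. The internal SCLP/SCLP$^*(x^0,q^N)$ pivots (pre- and post-boundary) fall under the classification SCLP-I, II, III, IV of \cite{weiss:08}; the correctness results there guarantee that, starting from the intermediate base sequence $(\tcK_n,\tcJ_n)_{n=1}^{N''}$ and inserting or removing the appropriate bases $D_1,\ldots,D_M$, one obtains an optimal base sequence $(\K'_n,\J'_n)_{n=1}^{N'}$ for the internal SCLP/SCLP$^*$ at the updated pair $((x^0)',(q^N)')$. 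The boundary step is a standard primal--dual LP pivot on the dictionary $\D$: in the type I case the optimal solution at $\L(\otheta)$ is unique so the $\V$-side basic solution remains optimal and feasible in a one-sided neighborhood; in the type II case, the ratio test ensures that $\bw$ takes a non-negative value and that $\bw^*$ drops to $0$, so the new basic solution of Boundary-LP$(\tU',\tx')$ and Boundary-LP$^*(\tP',\tq')$ is feasible and complementary slack, hence optimal by Theorem \ref{thm.moddecomp}(iii).

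With these internal and boundary pieces in hand, Theorem \ref{thm.moddecomp}(ii) composes them into an optimal solution of M-CLP/M-CLP$^*$ at a $\rho$ just inside $\W$, and Theorem \ref{thm.corresp} guarantees the optimal Boundary-LP solution recovered is precisely the one determined by $(\K'_n,\J'_n)_{n\in\{0,N'+1\}}$. Since all elements of $\bH_\setP$ are strictly positive on the $\W$ side by construction (we identified $\bv'$ as the unique variable shrinking to zero from the $\W$ side, or $\bv'=\emptyset$ and the solution is strictly positive across the collision), Theorem \ref{thm.interior} implies that $(\K'_n,\J'_n)_{n=0}^{N'+1}$ is the optimal base sequence in $\W$.

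The main obstacle is the case-by-case verification that the rules for identifying $\bv'$, for choosing the new boundary indices, and for inserting bases $D_1,\ldots,D_M$ in the post-boundary step together yield a proper base sequence. In particular, I would have to check, across all six $\V$-side collision types (a)--(f) paired with the corresponding $\W$-side type identified by the algorithm, that: (i) the new internal bases remain admissible and adjacent (so the $B'_n\to B'_{n+1}$ are single pivots with $u^n,p^n\ge 0$), (ii) the compatibility conditions $\K'_0\subseteq\K'_1$ and $\J'_{N'+1}\subseteq\J'_{N'}$ hold, with particular care when $\bv'=x^\bullet_{k'}$ or $\bv'=q^\bullet_{j'}$ triggers a type (f) collision and forces bases to be inserted at an endpoint via an SCLP-IV pivot, and (iii) the resulting $(x^0)',(q^N)'$ obtained from $x^\bullet+\tx,\,q^\bullet+\tq$ are consistent with the updated internal SCLP/SCLP$^*$ that the post-boundary SCLP pivot is applied to. The affine dependence given by Proposition \ref{thm.uniqueparams}, together with the characterization $\tilde{x}_k=-\min_{0\le n\le N}\sum_{m=1}^n\dx_k^m\tau_m$ from (\ref{eqn.boundarycomp}), is what makes this verification tractable.
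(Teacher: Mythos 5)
Your plan differs substantially from the paper's argument, and it also has a genuine gap. The paper does not try to verify directly that the constructed $(\K'_n,\J'_n)_{n=0}^{N'+1}$ yields a strictly positive $\bH_\setP$ on the $\W$ side. Instead it reasons about the set of proper base sequences that are optimal at the collision point $\L(\otheta)$: since $\L(\otheta)$ is interior to $\F$ and lies on the boundary of $\V$, some other region $\W$ with non-empty interior contains it, and the $\W$-optimal sequence is among those optimal at $\L(\otheta)$. The paper then enumerates the pivot types and shows, case by case, that at $\L(\otheta)$ there are either exactly two optimal base sequences (the old and the constructed one), or that any third sequence arises from a convex combination of two boundary solutions for which $\bH_\setP$ is not unique, so its matrix $M$ is singular and, by Corollary \ref{thm.nonsingular}, its validity region has empty interior. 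The constructed sequence is thereby forced to be the one optimal on $\W$.

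Your proposal tries to go the other way: compose internal SCLP/SCLP$^*$ pieces (citing \cite{weiss:08}) with Boundary-LP/LP$^*$ pieces via Theorem \ref{thm.moddecomp}(ii), then invoke Theorem \ref{thm.corresp} and Theorem \ref{thm.interior}. Two issues. First, the step "the new basic solution of Boundary-LP/LP$^*$ is feasible and complementary slack, hence optimal by Theorem \ref{thm.moddecomp}(iii)" misuses that theorem: part (iii) asserts optimality of boundary solutions that already come from a decomposition of an optimal M-CLP solution; it is not an independent optimality certificate for a newly pivoted boundary basis, so you cannot use it to bootstrap the composition. (The standard LP fact that a feasible, complementary-slack basic pair is optimal would serve, but you would then still need to check that (\ref{eqn.boundarycomp}) is satisfied with the $\tU,\tx,\tP,\tq$ that the post-pivot internal SCLP actually produces, which is exactly the deferred case analysis.) Second, the crucial claim "all elements of $\bH_\setP$ are strictly positive on the $\W$ side by construction" is asserted, not proved; establishing it is essentially the entire content of the theorem. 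Absent that, your final appeal to Theorem \ref{thm.interior} does not go through. You correctly identify the case-by-case verification as the main obstacle, but without it the argument is not a proof; the paper's singular-$M$ argument is precisely what lets it avoid most of that verification.
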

\begin{proof}  See Appendix \ref{sec.uniqueproof}  \rule{0.5em}{0.5em} \end{proof}

\section{Algorithm under simplifying assumption}
\label{sec.algorithm_simpl}
Consider an M-CLP problem and let $\beta_g, \gamma_g,$ $ \lambda_g, \mu_g, T_g$ be a goal set of boundary parameters in the feasible parametric region, i.e. the set of boundary parameters for which an optimal solution should be found.  Let $\beta_0, \gamma_0, \lambda_0, \mu_0, T_0$ be another set of boundary parameters with a known optimal solution.
 Define {\em the parametric line  of boundary parameters} as $\L(\theta) = (1-\theta)(\beta_0, \gamma_0, \lambda_0, \mu_0, T_0) + \theta (\beta_g, \gamma_g, \lambda_g, \mu_g, T_g)$, $0 \le \theta \le 1$. 
The algorithm described in this section requires the following assumption:
\begin{assumption}[Simplifying Assumption]
\label{asm.smpl}
Assume that all collisions \hfill\linebreak along the parametric line $\L(\theta)$  are of type (a)-(f).  
\end{assumption}
We discuss the algorithm without this assumption in Appendix \ref{sec.general}

\subsection{Properties of the parametric line}
\label{sec.line}
\begin{theorem}
\label{thm.alg-line}
Let $\L(\theta), 0 \le \theta \le 1$ be a parametric line of boundary parameters.
If M-CLP/M-CLP$^*$ are feasible for both goal set of parameters $\L(1)$ and initial set of parameters $\L(0)$, then:

(i) all points of the parametric line  belong to the parametric-feasible region,

(ii) if $\L(0)$ is an interior point of some validity region, then all points of the parametric line except possibly $\L(1)$ belong to the interior of the parametric-feasible region,

(iii) the parametric line can be  partitioned to a minimal finite set of closed intervals of positive length $[\theta_{\ell -1}, \theta_\ell],$ $ \ell = 1,\dots, L \le \binom{4(K+J)}{2(K+J)}\; 2^{\binom{K+J}{K}}$ such that for each interval there is a (possibly non-unique)  proper base sequence  which is optimal for every $\L(\theta),\,\theta \in [\theta_{\ell -1}, \theta_\ell]$.
\end{theorem}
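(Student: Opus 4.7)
The plan is to lean on three results already established in the paper: both $\F$ and every validity region are closed convex polyhedral cones (Section~\ref{sec.region}); the interiors of distinct validity regions are disjoint (Theorem~\ref{thm.interior}); and the total number of proper base sequences is bounded by ${4(K+J) \choose 2(K+J)}\; 2^{K+J \choose K}$ (Proposition~\ref{thm.number}). Parts (i) and (ii) then reduce to pure convexity. For (i) I would note that $\L(0),\L(1)\in\F$ by hypothesis, so $\L(\theta)=(1-\theta)\L(0)+\theta\L(1)\in\F$ for every $\theta\in[0,1]$ by convexity of $\F$. For (ii) I would invoke the standard convex-analysis lemma that in a convex set $C$, if $x\in\inter(C)$ and $y\in\overline{C}$, then $(1-t)x+ty\in\inter(C)$ for every $t\in[0,1)$; since $\L(0)$ is interior to some validity region $\V\subseteq\F$ it is interior to $\F$, so the lemma applied with $C=\F$, $x=\L(0)$, $y=\L(1)$ delivers (ii).

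For (iii) my approach is to observe that each validity region $\V$, being closed and convex, gives a closed subinterval $I_\V=\{\theta\in[0,1]:\L(\theta)\in\V\}\subseteq[0,1]$ (possibly empty or a single point). Part (i) says that the family $\{I_\V\}$ covers $[0,1]$, and Theorem~\ref{thm.interior} says that two members of positive length can meet only at a shared endpoint. To verify that each cell of the resulting partition has \emph{positive} length I would argue as follows: by Corollary~\ref{thm.affine} the components of $\bH_\setP$ depend affinely on $\rho$, so the boundary of every validity region is cut out in parameter space by finitely many hyperplanes, and an affine line meets each such hyperplane in at most one $\theta$. Hence collisions occur at isolated values of $\theta$, separating the line into open subintervals of positive length on each of which $\bH_\setP>0$ and a single base sequence is optimal. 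At each collision, Assumption~\ref{asm.smpl} forces the collision type to lie in (a)--(f), and Theorems~\ref{thm.boundary-unique} and~\ref{thm.facet} then furnish a unique successor base sequence optimal on the adjacent validity region. Listing the nonempty $I_\V$ of positive length along the line, and merging any two adjacent ones that happen to carry the same base sequence, yields the minimal partition. The bound on $L$ is then immediate: each validity region, being convex, contributes at most one $I_\V$, so $L$ does not exceed the total count of proper base sequences in Proposition~\ref{thm.number}.

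The main obstacle I anticipate is the endpoint analysis. Either endpoint $\L(0)$ or $\L(1)$ may lie on the common boundary of several validity regions, in which case one must specify which validity region labels the abutting cell and check that the chosen base sequence really is optimal at the endpoint itself rather than only in the adjacent open interval. This is also exactly the place where the simplifying assumption is indispensable, since a multiple collision at an endpoint, or indeed anywhere along the line, would require the extended pivot machinery deferred to Section~\ref{sec.general} and would break the argument that the successor base sequence at a crossing is well defined by a single pivot.
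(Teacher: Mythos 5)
Your arguments for (i) and (ii) are correct and essentially the same as the paper's: (i) is convexity of $\F$ applied to $\L(0),\L(1)\in\F$, and (ii) is the standard lemma that the segment from an interior point to a closure point stays in the interior except possibly at the far endpoint.

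Your treatment of (iii), however, introduces a dependence that the theorem does not have and that the paper's proof deliberately avoids. You invoke Assumption~\ref{asm.smpl} together with Theorems~\ref{thm.boundary-unique} and~\ref{thm.facet} to argue that collisions are of type (a)--(f) and that each collision yields a unique successor base sequence. But the simplifying assumption is not among the hypotheses of Theorem~\ref{thm.alg-line}, and the theorem must hold without it: the very purpose of Section~\ref{sec.general} is to drop Assumption~\ref{asm.smpl}, and the finiteness argument there still relies on the bound $L\le {4(K+J)\choose 2(K+J)}2^{K+J\choose K}$ from this theorem. The paper's proof of (iii) is a pure convexity-and-counting argument: $\F$ is the union of validity regions; each validity region is convex, hence meets the segment $\{\L(\theta):0\le\theta\le1\}$ in a (possibly degenerate) closed interval; convexity further implies that no base sequence can contribute two disjoint intervals; and Proposition~\ref{thm.number} bounds the number of proper base sequences, so the covering is finite. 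No pivot machinery, collision classification, or uniqueness of successors is needed.

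You also work harder than necessary to ensure positive length. The affine-hyperplane argument via Corollary~\ref{thm.affine} is not wrong, but it is overkill; it suffices to note that among the finitely many closed intervals $I_\V$ covering $[0,1]$, the degenerate (single-point) ones cover a set of measure zero, and since the union of the remaining positive-length closed intervals is closed and contains $[0,1]$ minus finitely many points, it is all of $[0,1]$. One then merges and trims to obtain a minimal partition. Finally, the obstacle you anticipate at the endpoints is not actually there: the theorem asks only for \emph{existence} of a proper base sequence optimal on each closed cell, not for a canonical choice, and closedness of validity regions guarantees that whichever region supplies a cell's interior also contains its endpoints.
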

\begin{proof}  See Appendix \ref{sec.uniqueproof}  \rule{0.5em}{0.5em} \end{proof}

\begin{proposition}
\label{thm.alg-utheta}
Consider a continuous parametric line of boundary parameters $\L(\theta)$. Then the decomposition parameters $\tU(\theta), \tx(\theta), \tP(\theta), \tq(\theta)$ are continuous piecewise affine functions of $\theta$.
\end{proposition}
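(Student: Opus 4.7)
The plan is to combine Theorem \ref{thm.alg-line}(iii), which partitions the parametric line into finitely many closed sub-intervals on each of which a common proper base sequence is optimal, with Corollary \ref{thm.affine}, which gives affine dependence of $\bH_\setP$ on $\rho$ within a validity region, and with Proposition \ref{thm.uniqueparams}, which supplies uniqueness of the decomposition parameters for continuity across the sub-interval boundaries.

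First I would fix a sub-interval $[\theta_{\ell-1}, \theta_\ell]$ with its proper base sequence $(\K_n, \J_n)_{n=0}^{N+1}$. Since $\L(\theta) = (1-\theta)\rho_0 + \theta\rho_g$ is affine in $\theta$, it is enough to show that $\tU,\tP,\tx,\tq$ depend in a piecewise affine way on $\rho$ along the portion of the validity region swept out by this sub-interval. Solving Rates-LP and Rates-LP$^*$ determines the rates $u^n,p^n,\dx^n,\dq^n$ from the bases alone, independently of $\rho$. Corollary \ref{thm.affine} then expresses the interval lengths $\tau_n$ as affine functions of the non-zero entries of $\bR$, and hence of $\rho$. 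Consequently
\[
\tU=\sum_{n=1}^{N} u^n\tau_n,\qquad \tP=\sum_{n=1}^{N}p^n\tau_n
\]
are affine in $\rho$, and so affine in $\theta$ on $[\theta_{\ell-1},\theta_\ell]$.

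For the decomposition parameters $\tx$ and $\tq$, the formula recorded in Theorem \ref{thm.moddecomp}(i),
\[
\tx_k=-\min_{1\le n\le N}\Bigl(0,\;\sum_{m=1}^n \dx_k^m\tau_m\Bigr),\qquad \tq_j=-\min_{1\le n\le N}\Bigl(0,\;\sum_{m=n}^N \dq_j^m\tau_m\Bigr),
\]
realises each $\tx_k$ and each $\tq_j$ as the maximum of finitely many affine functions of $\rho$, hence as a continuous piecewise affine function of $\rho$. Substituting $\rho=\L(\theta)$ gives continuous piecewise affine functions of $\theta$ on $[\theta_{\ell-1},\theta_\ell]$. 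The additional break points introduced by switches of the active argument in the $\min$ come from equalities among finitely many affine functions of $\theta$, so only finitely many are produced within each sub-interval; together with the finite partition from Theorem \ref{thm.alg-line}(iii), this yields a finite global partition of $[0,1]$ on which $\tU,\tP,\tx,\tq$ are affine.

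Finally, continuity at the sub-interval endpoints $\theta_\ell$ is the main thing to verify, but it is immediate from Proposition \ref{thm.uniqueparams}: the decomposition parameters are uniquely determined by the problem data and the boundary parameters $\L(\theta_\ell)$, so the affine pieces obtained from the base sequences on $[\theta_{\ell-1},\theta_\ell]$ and $[\theta_\ell,\theta_{\ell+1}]$ must coincide at $\theta_\ell$. The only genuine obstacle is bookkeeping: one has to see that the possibly non-unique optimal base sequence at the break points still yields a well-defined (single) value of each decomposition parameter, and uniqueness in Proposition \ref{thm.uniqueparams} exactly supplies this.
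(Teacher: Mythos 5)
Your proposal is correct and follows essentially the same route as the paper's proof: restrict to a validity region where the base sequence is fixed (Theorem \ref{thm.alg-line}(iii)), obtain affine (or piecewise-affine) dependence there via Corollary \ref{thm.affine}, and use the uniqueness in Proposition \ref{thm.uniqueparams} to glue the pieces continuously at the breakpoints $\theta_\ell$. You simply spell out, via the explicit $\min$ formula for $\tx,\tq$ and the expressions $\tU=\sum_n u^n\tau_n$, $\tP=\sum_n p^n\tau_n$, the piecewise-affine structure that the paper obtains by a direct appeal to Corollary \ref{thm.affine}.
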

\begin{proof}
Within each closed validity region $\tU(\theta), \tx(\theta), \tP(\theta), \tq(\theta)$ are affine by Corollary \ref{thm.affine}.  Moreover, by Proposition \ref{thm.uniqueparams}, for every set of boundary parameters $\tU, \tx, \tP, \tq$ are unique, and therefore $\tU(\theta), \tx(\theta), \tP(\theta), \tq(\theta)$ are continuous at the boundary points of the validity regions $\L(\theta_\ell)$.
\qquad
 \rule{0.5em}{0.5em} \end{proof}

\subsubsection{Initial solution}
\label{sec.init}
The following theorem shows how to obtain a set of boundary parameters for a simple initial optimal solution.
\begin{theorem}
\label{thm.single-opt}
Consider a set of boundary parameters $\beta_0, \gamma_0, \lambda_0, \mu_0, T_0$ that satisfy:
\begin{equation}
\label{eqn.initial2}
 \beta_0 > 0, \; \lambda_0 < 0, \; \beta_0 + bT_0+ \lambda_0 > 0, \; \gamma_0 < 0, \;  \mu_0 > 0, \; \gamma_0 + cT_0 + \mu_0 < 0,
\end{equation}
then M-CLP/M-CLP$^*$ have a single-interval optimal solution, with
$U(t)=P(t)=0$, and $x(t),-q(t)$ equal to the right hand sides of 
(\ref{eqn.gmpclp}), (\ref{eqn.gmdclp}),  and  this set of boundary values is an interior point of the validity region.
\end{theorem}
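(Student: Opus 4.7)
The plan is to exhibit the claimed solution explicitly, check primal and dual feasibility from the strict inequalities in (\ref{eqn.initial2}), use weak duality to confirm optimality, then read off a proper base sequence and verify that its associated vector $\bH$ has $\bH_\setP>0$ strictly, so that Theorem \ref{thm.interior} yields interiority of the validity region.

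First, I set $U(t)\equiv 0$ and $P(t)\equiv 0$ on $[0,T_0]$.  Then the primal constraints of (\ref{eqn.gmpclp}) force $x(t)=\beta_0+bt$ on $[0,T_0)$ and $\bx^N=x(T_0)=\beta_0+bT_0+\lambda_0$, and dually $q(t)=-\gamma_0-ct$ on $[0,T_0)$ and $\bq^0=q(T_0)=-\gamma_0-cT_0-\mu_0$.  Since $x_k(t)$ is affine in $t$, the minimum of $x_k$ on $[0,T_0)$ occurs at one of the endpoints; by (\ref{eqn.initial2}) $\beta_0>0$ and $\beta_0+bT_0\ge\beta_0+bT_0+\lambda_0>0$ (using $\lambda_0\le 0$), so $x(t)>0$ throughout, and likewise $q(t)>0$.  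So the candidate is feasible.  The two objective values both vanish because $dU=dP=0$, hence by weak duality (Theorem S3.1 / Corollary S3.3) the solution is strongly dual and optimal, and complementary slackness (\ref{eqn.compslack}) is trivial.

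Next I identify the base sequence.  There is a single interval $N=1$, $\tau_1=T_0$, with rates $u\equiv 0$, $\dx\equiv b$, $p\equiv 0$, $\dq\equiv -c$.  In the Rates-LP (\ref{eqn.prates}) all $u_j$ are non-basic and all $\dx_k$ are basic (unrestricted), giving $\J_1=\{1,\dots,J\}$ and $\K_1=\{1,\dots,K\}$; by symmetry this same pair works for Rates-LP$^*$, and under Assumption \ref{asm.nondeg} it is the unique admissible non-degenerate basis.  From the positivity of the boundary values $x^0=\beta_0$, $\bx^N=\beta_0+bT_0+\lambda_0$, $q^N=-\gamma_0$, $\bq^0=-\gamma_0-cT_0-\mu_0$ and the zero-values of $\bu^0,\bu^N,\bp^0,\bp^N$, the complementary slackness rules (\ref{eqn.compatible}) force $\K_0=\K_2=\{1,\dots,K\}$ and $\J_0=\J_2=\{1,\dots,J\}$.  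Compatibility $\K_0\subseteq\K_1$ and $\J_{N+1}\subseteq\J_N$ is automatic, adjacency is vacuous since $N=1$, and admissibility ($u^1,p^1\ge 0$) is trivial, so the sequence is proper.

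Finally, I read off the vector $\bH$ as listed after (\ref{eqn.ex-coupled}).  The zero-set $\bH_\setZ$ consists exactly of $\bu^0,\bu^N,\bp^0,\bp^N$ (all zero), while $\bH_\setP$ contains $x^0=\beta_0$, $\bx^N=\beta_0+bT_0+\lambda_0$, $x^1=x(T_0-)=\beta_0+bT_0$, $q^0=-\gamma_0-cT_0$, $q^N=-\gamma_0$, $\bq^0=-\gamma_0-cT_0-\mu_0$ and $\tau_1=T_0$.  The hypotheses (\ref{eqn.initial2}) make every one of these components \emph{strictly} positive, so the solution is fully non-degenerate.  A small neighborhood of $\rho_0$ still satisfies the strict inequalities of (\ref{eqn.initial2}) and therefore still produces the same feasible $U\equiv P\equiv 0$ solution, so $\rho_0$ lies in the interior of $\F$.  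Theorem \ref{thm.interior}, direction (iii)$\Rightarrow$(i), then gives that $\rho_0$ is an interior point of the validity region $\V$ of the constructed base sequence.

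The only subtle point is the bookkeeping for $\bH_\setP$: one needs to confirm that, under the base sequence just derived, none of the components that would normally be forced to zero by (\ref{eqn.lineq1})--(\ref{eqn.compatible}) overlap with the components of $x^0,\bx^N,q^N,\bq^0,x^1,q^0,\tau_1$, and that Assumption \ref{asm.nondeg} rules out any hidden degeneracy in the Rates-LP basis.  Both follow from the fact that $\K_0\cup\K_{N+1}$ and $\J_0\cup\J_{N+1}$ are full index sets in our construction, so no state component is forced to zero at the boundary.
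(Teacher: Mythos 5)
Your proposal follows essentially the same route as the paper: exhibit the zero-control solution, verify feasibility from the strict inequalities, conclude optimality (you via weak duality and equal zero objectives, the paper via feasibility plus complementary slackness -- same idea), read off the all-index-set base sequence, check $\bH_\setP>0$, and invoke Theorem~\ref{thm.interior}.

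The one thing you do not address is the \emph{uniqueness} half of condition (iii) of Theorem~\ref{thm.interior}, which requires not only $\bH_\setP>0$ but also that the solution of $M\bH=\bR$ is unique. Your final paragraph discusses bookkeeping of $\bH_\setZ$ versus $\bH_\setP$ and possible Rates-LP degeneracy, but never argues non-singularity of $M$ (equivalently, uniqueness of $\bH$). The paper closes this via Theorem~\ref{thm.corresp}: it observes that the Boundary-LP/LP$^*$ solution is unique, and hence so is the M-CLP solution. In your construction this is also easy to see directly -- with $\K_0=\K_1=\K_2$ and $\J_0=\J_1=\J_2$ all full, the complementary slackness rules fix $\bu^0=\bu^N=\bp^0=\bp^N=0$, and then $x^0=\beta_0$, $q^N=-\gamma_0$, $\tau_1=T_0$, $x^1$, $q^0$, $\bx^N$, $\bq^0$ are each determined one after another by (\ref{eqn.sumtau}), (\ref{eqn.boundary1}), (\ref{eqn.boundary2}), (\ref{eqn.lineq2}) -- so the system has a unique solution. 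Adding that one sentence makes your appeal to Theorem~\ref{thm.interior}(iii) complete.
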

\begin{proof}
It is easy to see that under (\ref{eqn.initial2}) $U(t)=0,\, x(t) = \beta_0 + bt,\, 0 \le t < T_0,\, x(T) = \beta_0 + bT_0 + \lambda_0,\, P(t) = 0,\, q(t) = - \gamma_0 - ct, 0 \le t < T_0, $ $q(T) = - \gamma_0 - cT_0 - \mu_0$ are feasible complementary slack solutions of M-CLP/M-CLP$^*$, and hence are optimal.
 They define an optimal base sequence $(\K_n, \J_n)_{n=0}^2, \, \K_0 = \K_1 = \K_2 = \{1,\dots,K\}, \, \J_0 = \J_1 = \J_2 = \{1,\dots,J\}$, with a single interval solution.
Moreover, one can see that the solution to the corresponding  Boundary-LP/LP$^*$ is unique, and hence by Theorem \ref{thm.corresp} this solution of M-CLP/M-CLP$^*$ is unique.  It is easy to see that $\bH_\setP >0$ and so by Theorem \ref{thm.interior}  the parameter values are an interior point of its validity region.
%
 \qquad
 \rule{0.5em}{0.5em} \end{proof}

\subsection{The algorithm}
\label{sec.algorithm}
We now describe a simplex-type algorithm to solve M-CLP/M-CLP$^*$ in a finite number of steps. For the algorithm to work we need the Non-Degeneracy Assumption \ref{asm.nondeg} and the Simplifying Assumption \ref{asm.smpl} to hold.

\subsubsection{Input} Problem data consists of $A, b, c$, and  goal  parameters $\L(1) = (\beta_{g}, \gamma_{g}, \lambda_{g}\le 0, \mu_{g}\ge 0, T_{g})$.

\subsubsection{Output} Optimal solution of  M-CLP/M-CLP$^*$ at $\L(1)$: optimal base sequence, breakpoint times, impulse controls at $0$ and $T_g$, rates $u, \dx, p, \dq$ in each interval, states $x(t), q(T_{g}-t)$ at each breakpoint.

\subsubsection{Feasibility test}
Solve Test-LP/LP$^*$ (\ref{eqn.ptestLP-ex}) for $\L(1)$ to determine whether both M-CLP/M-CLP$^*$ are feasible, or both are infeasible, or one is infeasible and the other unbounded. Proceed to solve the problems if both are feasible, otherwise stop. 

\subsubsection{Initialization}
Choose $\L(0)=(\beta_0,\gamma_0,\lambda_0,\mu_0,T_0)$ that satisfy:
\begin{equation*}
 \beta_0 > 0, \; \lambda_0 < 0, \; \gamma_0 < 0, \;  \mu_0 > 0, \; T_0 >0, \; \beta_0 + bT_0+ \lambda_0 > 0, \;  \gamma_0 + cT_0 + \mu_0 < 0.
\end{equation*}
Define $\delta\L=\L(1)-\L(0)$.  

The initial solution has  $U(t)=P(t)=0$, $x(t),-q(t)$ are r.h.s. of (\ref{eqn.gmpclp}), (\ref{eqn.gmdclp}) at  $\L(0)$, and 
the initial optimal base sequence is $(\K_n, \J_n)_{n=0}^2 = \{(\K_n = \{1,\dots,K\}, \J_n =  \{1,\dots,J\}), n = 0,1,2\}$.

Set:  $\ell := 1$,\, $\theta_\ell := 0$,\,  
$(\K,\J)^{(\ell)} :=  (\K_n, \J_n)_{n=0}^2$.

\subsubsection{Iteration}
\paragraph{$\bullet$ Set up equations\\}  Calculate rates for all bases in $(\K,\J)^{(\ell)}$, construct the coefficients of the matrix $M_\ell$ for $(\K,\J)^{(\ell)}$.   Construct $\bR_\ell$, $\delta \bR_\ell$ from $\L(\theta_\ell)$ and $\delta\L$.  

\paragraph{$\bullet$ Calculate current solution and its $\theta$ gradient\\}
Solve the equations
\[
M_\ell \bH^\ell = \bR_\ell,  \qquad  M_\ell \delta \bH^\ell = \delta \bR_\ell.
\]

\paragraph{$\bullet$ Find right endpoint of the validity region\\}
Find 
\[ 
1/\overline{\Delta} = \max_{\lbrace r: \bH_r^\ell \in \bH_\setP^\ell \rbrace}\left(0, \frac{-\delta \bH^\ell_r}{\bH^\ell_r}\right).
\] 
The right endpoint of the validity region is $\otheta : = \theta_\ell + \overline{\Delta}$.

\paragraph{$\bullet$ Termination test and conclusion\\}
If $\theta_\ell < 1 \le \overline{\theta}$ terminate: $(\K,\J)^{(\ell)}$ is the optimal base-sequence,  current rates are optimal, remaining output is obtained from $\bH = \bH^\ell + (1-\theta_\ell) \delta \bH^\ell$.

\paragraph{$\bullet$ Update solution\\}
Calculate values of the solution at $\otheta$, as $\overline{\bH} = \bH^\ell + \overline{\Delta} \delta\bH^\ell$. 
  
\paragraph{$\bullet$ Classify collision as $\theta \nearrow \otheta$\\}
Find the set  of elements of $H^\ell$ that are 0 at $\L(\otheta)$. Split $\K_0, \J_{N+1}$ to sets $\K^{=}, \K^{\uparrow\downarrow}, \J^{=}, \J^{\uparrow\downarrow}$.
Classify the collision to types (a)-(f) as discussed in Section \ref{sec.collisions}.

\paragraph{$\bullet$ Pivot \\}  Perform M-CLP pivot to determine a new base sequence, $(\K'_n,\J'_n)_{n=0}^{N'+1}$ as discussed in Section \ref{sec.pivot}.
Set $\ell := \ell+1$, $\theta_\ell := \otheta$, $(\K,\J)^{(\ell)}:= (\K'_n,\J'_n)_{n=0}^{N'+1}$ and move to the next iteration.

\subsection{Verification of the algorithm}
\label{sec.verify}
In this section we verify that under Simplifying Assumption \ref{asm.smpl} and Non-Degeneracy Assumption \ref{asm.nondeg}  all calculations described in Section \ref{sec.algorithm_simpl} are possible, and for any pair of feasible M-CLP/M-CLP$^*$ problems the algorithm produces an optimal solution in a finite bounded number of iterations. The main point here is to show that the new base sequence is  optimal for $\theta > \otheta$ small enough, and we show  that this property follows from Theorem \ref{thm.facet}.

\begin{theorem}
\label{thm.alg}
Assume that M-CLP/M-CLP$^*$ are feasible for the goal set of parameters $\beta_{g}, \gamma_{g}, T_{g},$ $  \lambda_{g}, \mu_{g}$. Assume also Non-Degeneracy Assumption \ref{asm.nondeg} and Simplifying Assumption \ref{asm.smpl}. Then the algorithm will compute the solution of M-CLP/M-CLP$^*(\beta_{g}, \gamma_{g}, T_{g}, \lambda_{g}, \mu_{g})$ in a finite number of iterations, bounded by $L \le 
\binom{4(K+J)}{2(K+J)}\;2^{\binom{K+J}{K}}$.
\end{theorem}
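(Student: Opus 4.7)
The plan is to prove the theorem by induction on the iteration index $\ell$, maintaining the invariant that at the start of iteration $\ell$ the current base sequence $(\K,\J)^{(\ell)}$ is optimal throughout a closed subinterval $[\theta_{\ell-1},\theta_\ell]$ of the parametric line, with $\L(\theta_\ell)$ lying in the interior of the corresponding validity region. Three things then need to be verified at each step: that the linear system is solvable, that the right-endpoint calculation produces a genuine collision at which a pivot can be performed, and that the pivot yields an optimal base sequence for the next interval. Finiteness will follow by linking iterations to distinct validity regions and invoking Theorem \ref{thm.alg-line}(iii).

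For the base case, I would invoke Theorem \ref{thm.single-opt}: the chosen $\L(0)$ satisfies (\ref{eqn.initial2}), so the single-interval base sequence $\K_n=\{1,\ldots,K\},\J_n=\{1,\ldots,J\}$ for $n=0,1,2$ is optimal and $\L(0)$ lies in the interior of its validity region. Then $H_\setP^1 > 0$ strictly, and Corollary \ref{thm.nonsingular} gives that $M_1$ is non-singular, so $\bH^1$ and $\delta\bH^1$ are uniquely defined by the two linear systems in the algorithm.

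For the inductive step, the right-endpoint formula $1/\overline{\Delta}=\max\{0,-\delta\bH_r^\ell/\bH_r^\ell\}$ identifies the first $\theta>\theta_\ell$ at which some component of $\bH_\setP$ reaches zero. Because we start at an interior point, $\overline{\Delta}>0$, so genuine progress is made. By simplifying assumption \ref{asm.smpl}, the collision at $\L(\otheta)$, viewed from both sides, is of one of the types (a)--(f). The M-CLP pivot of Section \ref{sec.pivot} then applies: Theorem \ref{thm.boundary-unique} guarantees that a unique base sequence $(\K'_n,\J'_n)_{n=0}^{N'+1}$ is produced, and Theorem \ref{thm.facet} ensures that this sequence is the optimal base sequence in the neighboring validity region $\W$. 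Corollary \ref{thm.affine} together with Proposition \ref{thm.alg-utheta} shows that the rates, interval lengths, and boundary values evolve affinely in $\theta$ across the interval, so the solution update $\bH=\bH^\ell+(1-\theta_\ell)\delta\bH^\ell$ at the termination test, when $\overline{\theta}\ge 1$, is the optimal solution at $\L(1)$.

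For finiteness, note that each iteration increases $\theta_\ell$ strictly and, by Theorem \ref{thm.interior}, distinct iterations correspond to base sequences with disjoint validity-region interiors; no validity region is entered twice since $\theta$ is monotone. Hence the total number of iterations is bounded by the number of validity regions met by the parametric line, which by Theorem \ref{thm.alg-line}(iii) is at most ${4(K+J)\choose 2(K+J)}\,2^{K+J\choose K}$. The main delicate point is verifying at the pivot step that $\L(\otheta+\varepsilon)$ is itself an interior point of the new region $\W$ for small $\varepsilon>0$, which is what permits the induction to continue; this follows because $\W$ is a full-dimensional closed convex polyhedral cone and the line enters its interior transversally, as argued in Theorem \ref{thm.alg-line}.
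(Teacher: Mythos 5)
Your proof is correct and follows essentially the same route as the paper, which establishes the theorem via a sequence of propositions (initialization via Theorem \ref{thm.single-opt}, solvability via Corollary \ref{thm.nonsingular}, strict progress $\overline{\Delta}>0$, the inductive pivot step via the simplifying assumption together with Theorems \ref{thm.boundary-unique} and \ref{thm.facet}, and the bound via Theorem \ref{thm.alg-line}(iii)). One small imprecision: the claim that ``no validity region is entered twice since $\theta$ is monotone'' should instead be justified by convexity of validity regions (a line meets a convex set in a single interval), which is in fact what the proof of Theorem \ref{thm.alg-line}(iii) relies on.
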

\begin{proof} The proof follows from the following sequence of Propositions. 
 \rule{0.5em}{0.5em} \end{proof}

\begin{proposition}
\label{thm.alg-feas}
The feasibility test can be performed, to determine whether M-CLP is feasible and bounded, or is infeasible, or is unbounded.
\end{proposition}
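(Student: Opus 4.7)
The plan is to reduce the feasibility test for the continuous problems M-CLP/M-CLP$^*$ to the solution of two ordinary finite-dimensional linear programs, namely Test-LP and its symmetric counterpart Test-LP$^*$. The excerpt already supplies the key equivalence: feasibility of M-CLP is equivalent to feasibility of Test-LP, and by the symmetric statement, feasibility of M-CLP$^*$ is equivalent to feasibility of Test-LP$^*$. Since both Test-LP and Test-LP$^*$ are standard LPs in the variables $(\bu,U)$ and $(\bp,P)$ respectively, with finitely many constraints and a constant coefficient matrix built from $A$, a Phase-I simplex procedure decides feasibility of each in a finite number of pivots.

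The next step is to translate the four possible outcomes of this pair of Phase-I runs into the three cases in the statement. First, if Test-LP is infeasible, then by the equivalence M-CLP is infeasible and we report so. Second, if Test-LP is feasible but Test-LP$^*$ is infeasible, then M-CLP is feasible while M-CLP$^*$ is infeasible; strong duality, established in \cite{shindin-weiss:13,shindin-weiss:14} (and extended to the present $\lambda\le 0,\mu\ge 0$ setting in Appendix \ref{sec.lambdamu}), then forces the primal to be unbounded, because a finite optimum would require a dual feasible solution. Third, by symmetry, if Test-LP is infeasible while Test-LP$^*$ is feasible, we conclude M-CLP is infeasible (and simultaneously that M-CLP$^*$ is unbounded). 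Finally, if both Test-LPs are feasible, then both M-CLP and M-CLP$^*$ are feasible, and once again by the strong duality result the optimal values are finite and equal, so M-CLP is feasible and bounded.

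Finiteness is immediate: each Test-LP has $K+J$ (respectively $J+K$) constraints in a comparable number of variables, and Phase-I of the simplex method (with any standard anti-cycling rule) terminates in finitely many pivots. No continuous-time computation is needed at this stage, and no properties of the optimal M-CLP solution or of its base sequence are required.

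The only genuine content is the mapping "Test-LP$^*$ infeasible and Test-LP feasible $\Longrightarrow$ M-CLP unbounded". I would expect this to be the one step requiring a brief citation rather than a direct argument, and the proof will cite the strong duality theorem from \cite{shindin-weiss:13,shindin-weiss:14} (together with the Appendix \ref{sec.lambdamu} extension) to rule out the possibility that M-CLP is feasible with no optimum attained but finite supremum. Everything else is a bookkeeping of LP outcomes.
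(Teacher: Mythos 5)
Your proposal is correct but takes a genuinely different route from the paper's own one-line proof. The paper argues by classifying Test-LP \emph{itself} as (i) feasible and bounded, (ii) infeasible, or (iii) unbounded, and declaring this classification to transfer directly to M-CLP; this implicitly asserts not only the feasibility equivalence that the paper explicitly states (Test-LP feasible $\Leftrightarrow$ M-CLP feasible) but also a boundedness/unboundedness transfer, which the paper treats as clear. You instead solve \emph{both} Test-LP and Test-LP$^*$, use only the two stated feasibility equivalences (for M-CLP and M-CLP$^*$ respectively), and then deduce the boundedness/unboundedness classification of M-CLP from M-CLP/M-CLP$^*$ duality: both feasible $\Rightarrow$ bounded (weak duality), primal feasible and dual infeasible $\Rightarrow$ primal unbounded. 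This matches the algorithm's ``Feasibility test'' step in Section \ref{sec.algorithm} more closely, which does say to solve Test-LP/LP$^*$, whereas the paper's proof of this proposition mentions only Test-LP. Note that Test-LP and Test-LP$^*$ are \emph{not} LP duals of each other (the right-hand sides differ), so your route really is structurally different and not a rephrasing.

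The one step you should be explicit about is exactly the one you flag: ``M-CLP feasible, M-CLP$^*$ infeasible $\Rightarrow$ M-CLP unbounded'' requires more than ``strong duality whenever both are feasible''; it needs the converse direction ``M-CLP feasible with finite supremum $\Rightarrow$ M-CLP$^*$ feasible'' (a Clark-type statement). You defer to the strong-duality machinery of \cite{shindin-weiss:13,shindin-weiss:14}, which is the right move and is what the paper's own Feasibility-test description presumes, but strictly speaking the paper's cited duality theorem is phrased assuming both problems feasible. Since the paper's one-line proof is at least as informal on this point, this is not a defect of your proposal relative to the paper, merely a shared reliance on the broader duality theory; it would be worth citing the specific result (e.g., Theorem S4.1 or D4.7) that rules out the feasible-bounded-but-dual-infeasible case.
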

\begin{proof}
Clearly, one can  determine if Test-LP (\ref{eqn.ptestLP-ex}) is feasible and bounded, or is infeasible or is unbounded, which determines the same for M-CLP.   \qquad
 \rule{0.5em}{0.5em} \end{proof}
\begin{proposition}
\label{thm.alg-ini}
Initialization phase of the algorithm  yields a unique optimal solution with a unique optimal base sequence, and $\L(0)$ is an interior point of its validity region.
\end{proposition}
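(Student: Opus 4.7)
The proposal is to observe that the initialization conditions on $\L(0)$ coincide exactly with hypothesis (\ref{eqn.initial2}) of Theorem \ref{thm.single-opt}, and then to invoke that theorem directly. Specifically, the algorithm chooses $\beta_0>0$, $\lambda_0<0$, $\beta_0+bT_0+\lambda_0>0$, $\gamma_0<0$, $\mu_0>0$, $\gamma_0+cT_0+\mu_0<0$, which is exactly (\ref{eqn.initial2}).

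First, I would apply Theorem \ref{thm.single-opt} to deduce that the single-interval solution $U(t)=P(t)=0$, with $x(t)=\beta_0+bt$ on $[0,T_0)$, $x(T_0)=\beta_0+bT_0+\lambda_0$, and analogously $q(t)=-\gamma_0-ct$ on $[0,T_0)$, $q(T_0)=-\gamma_0-cT_0-\mu_0$, is feasible and complementary slack, hence optimal for M-CLP/M-CLP$^*$ at $\L(0)$. This is exactly what the algorithm uses as its initial solution, and the base sequence $(\K_n,\J_n)_{n=0}^{2}$ with $\K_n=\{1,\dots,K\}$, $\J_n=\{1,\dots,J\}$ for $n=0,1,2$ is the one produced in the initialization.

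Next, I would verify that this base sequence is the unique optimal one and that $\L(0)$ is an interior point of its validity region. Theorem \ref{thm.single-opt} already asserts uniqueness of the M-CLP/M-CLP$^*$ optimal solution (via the unique solution of the corresponding Boundary-LP/LP$^*$ together with Theorem \ref{thm.corresp}), and asserts that $\L(0)$ is an interior point of its validity region via Theorem \ref{thm.interior} once one checks $\bH_\setP>0$. The strict inequalities in the initialization conditions ensure that every candidate entry of $\bH_\setP$ (the boundary values $\beta_0$, $-\gamma_0$, $\beta_0+bT_0+\lambda_0$, $-\gamma_0-cT_0-\mu_0$, and $T_0$) is strictly positive, so $\bH_\setP>0$.

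There is no real obstacle here, since the initialization was designed precisely so that Theorem \ref{thm.single-opt} applies. The proof is a one-line reduction: the hypotheses of Theorem \ref{thm.single-opt} are satisfied, and all three conclusions of the present proposition (optimality, uniqueness, and interior point) are delivered by that theorem together with Theorem \ref{thm.interior}.
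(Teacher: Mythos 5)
Your proposal is correct and coincides with the paper's own argument, which is simply ``Follows from Theorem~\ref{thm.single-opt}.'' You have merely unpacked that one-line reduction by spelling out that the initialization conditions are exactly (\ref{eqn.initial2}) and tracing through the three conclusions, which is the intended reasoning.
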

\begin{proof}
Follows from Theorem \ref{thm.single-opt}. \qquad
 \rule{0.5em}{0.5em} \end{proof}
\begin{proposition}
\label{thm.alg-term}
Assume that $\L(\theta)$ for some $\theta> \theta_\ell$ is in the interior of the validity region of $(\K,\J)^{(\ell)}$.  Then setting up equations, calculation of the current solution and its gradient, and calculation of the right endpoint $\otheta$, as well as the termination test can be carried out.
\end{proposition}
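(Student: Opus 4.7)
The plan is to verify each bullet of the iteration separately, invoking the hypothesis that some $\L(\theta)$ with $\theta>\theta_\ell$ lies in the interior of the validity region $\V$ of $(\K,\J)^{(\ell)}$, together with the structural results of Sections~\ref{sec.structure}--\ref{sec.region}.

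First, for the \emph{set up equations} step: since $(\K,\J)^{(\ell)}$ is a proper base sequence whose validity region $\V$ contains an interior point, each internal basis $B_n$ is admissible, so under Assumption~\ref{asm.nondeg} the Rates-LP$(\K_n,\J_n)$ and Rates-LP$^*(\K_n,\J_n)$ problems possess unique non-degenerate basic solutions; these yield the rates $u^n,\dx^n,p^n,\dq^n$ needed to assemble $M_\ell$. Construction of $M_\ell$ from $(\K,\J)^{(\ell)}$ is mechanical (cf. Section~S3.5), and $\bR_\ell, \delta\bR_\ell$ are obtained from $\L(\theta_\ell),\delta\L$ via the fixed embedding (\ref{eqn.transform}).

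Second, for the \emph{solve for $\bH^\ell,\delta\bH^\ell$} step: Corollary~\ref{thm.nonsingular} asserts that whenever the validity region has non-empty interior the associated matrix $M$ is non-singular. By assumption $\V$ contains an interior point, so $M_\ell$ is non-singular and both linear systems $M_\ell \bH^\ell=\bR_\ell$ and $M_\ell\,\delta\bH^\ell=\delta\bR_\ell$ have unique solutions. Moreover, by Corollary~\ref{thm.affine}, on $\V$ the solution is an affine function of $\rho$; hence along $\L$ one has
\[
\bH(\theta)=\bH^\ell+(\theta-\theta_\ell)\,\delta\bH^\ell \qquad \text{for all }\theta\text{ with }\L(\theta)\in\V,
\]
and the entries of $\bH_\setZ$ are identically zero.

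Third, for the \emph{right endpoint} step: membership in $\V$ is characterised by $\bH(\theta)\ge 0$ coordinatewise (with $\bH_\setZ\equiv 0$). Using the affine representation above, the largest $\theta-\theta_\ell$ preserving non-negativity is exactly
\[
\overline{\Delta}=\min\Bigl\{\tfrac{\bH^\ell_r}{-\delta\bH^\ell_r}\;:\;r\in\bH_\setP,\;\delta\bH^\ell_r<0\Bigr\},
\]
with the convention $\overline{\Delta}=\infty$ if the set is empty; this is precisely the max-of-ratios formula in the algorithm. Since by hypothesis there is a $\theta>\theta_\ell$ with $\L(\theta)$ in the interior of $\V$, and interior points have $\bH_\setP>0$ by Theorem~\ref{thm.interior}, we obtain $\overline{\Delta}>0$, so $\overline{\theta}=\theta_\ell+\overline{\Delta}>\theta_\ell$ is well-defined. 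Finally, the termination test amounts to comparing $1$ with $\overline{\theta}$ and, in the affirmative case, evaluating $\bH=\bH^\ell+(1-\theta_\ell)\delta\bH^\ell$, which is immediate from the affine representation.

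The only conceptual step is translating the non-emptiness of the interior of $\V$ into non-singularity of $M_\ell$ and into positivity of the relevant $\bH_\setP$ coordinates; both are handled by Corollary~\ref{thm.nonsingular} and Theorem~\ref{thm.interior}, so no further obstacle remains.
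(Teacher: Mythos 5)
Your proof is correct and follows essentially the same route as the paper: you assemble $M_\ell,\bR_\ell$ from the base sequence, the rates LP, and the transform (\ref{eqn.transform}); invoke Corollary~\ref{thm.nonsingular} for non-singularity of $M_\ell$; use the affine representation for $\bH(\theta)$; and derive $\overline\Delta>0$ from the hypothesis that the line enters the interior of $\V$. The only cosmetic difference is that you cite Corollary~\ref{thm.affine} and Theorem~\ref{thm.interior} explicitly where the paper argues more tersely that the interior of $\V$ contains an interval of the line, but the substance is the same.
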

\begin{proof}
%
%
%

The base sequence $(\K,\J)^{(\ell)}$ determines the structure of $M_\ell$ and of $\bR_\ell$.  Solution of the Rates-LP/LP$^*$ for $(\K_n, \J_n)_{n=1}^N$ is unique by non-degeneracy assumption, and provides the non-zero coefficients of $M_\ell$. The non-zero coefficients of  $\bR_\ell$ are given by $\L(\theta_\ell)$.   $\delta \bR_\ell$ has the same structure as 
$\bR_\ell$,  and its non-zero elements are given by $\delta \L$.   
 
We assume in the statement of the proposition that the base sequence $(\K,\J)^{(\ell)}$ has non-empty interior, and hence by Proposition \ref{thm.nonsingular} the matrix $M_\ell$ is non-singular. Hence $H^\ell$ is the unique solution of $M_\ell \bH^\ell = \bR_\ell$ and $\delta H^\ell$ is the unique solution of $M_\ell \delta\bH^\ell = \delta\bR_\ell$.  We denote by $\bH(\theta)$ the solution at point $\L(\theta)$.
 Then at the point $\theta >\theta_\ell$ in the interior of the validity region the solution is determined by 
\[
M_\ell \bH(\theta) = M_\ell \big(\bH^\ell + (\theta-\theta_\ell)\delta \bH^\ell \big)  = \bR_\ell + (\theta-\theta_\ell) \delta \bR_\ell.
\]

That $\overline \Delta > 0$ follows because the interior of the validity region of 
$(\K,\J)^{(\ell)}$ is non-empty and contains an interval of the line $\L(\theta)$.  It is possible to have $ 1/\overline \Delta = 0$ and  $\overline \Delta=\infty$.


Finally, if any of $\delta \bH_r < 0$ then by $\bH_\setP(\theta) \ge 0$ it follows that $\bH_r > 0$, and the ratio $ - \delta \bH_r / \bH_r > 0$ and it is finite, and so $1/\overline \Delta > 0$ and finite so that $0 < \overline \Delta < \infty$.  Otherwise $1/\overline \Delta = 0$ and $\overline \Delta = \infty$.
\qquad
 \rule{0.5em}{0.5em} \end{proof}

\begin{proposition}
\label{thm.alg-pivot}
M-CLP pivot can be performed and is unique. There  exists $\theta > \otheta$ small enough, such that base sequence $(\K,\J)^{(\ell+1)}$ is optimal for this $\theta$ and $\L(\theta)$ is an interior point of its validity region.
\end{proposition}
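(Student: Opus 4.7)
The plan is to deduce this proposition directly from the two pivot theorems of Section \ref{sec.pivot} together with Theorem \ref{thm.alg-line}, using the simplifying assumption to guarantee that the hypotheses of those theorems are met. First I would note that by Proposition \ref{thm.alg-term} (applied at the preceding iteration) $\L(\theta)$ lies in the interior of the validity region of $(\K,\J)^{(\ell)} = \V$ for $\theta \in (\theta_\ell, \otheta)$, and at $\theta = \otheta$ the solution $\overline{\bH}$ has at least one component of $\bH_\setP$ equal to zero, which by Theorem \ref{thm.interior} means $\L(\otheta)$ is on the boundary of $\V$. By the simplifying Assumption \ref{asm.smpl} the collision as $\theta \nearrow \otheta$ is of type (a)--(f), so each step of the M-CLP pivot described in Section \ref{sec.pivot} is well-defined on the data produced by the previous iteration.

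Next I would identify the neighboring region $\W$. By Theorem \ref{thm.alg-line}(iii) the parametric line $\L(\theta)$ is partitioned into finitely many closed intervals of positive length, each carrying a proper optimal base sequence; since $\otheta$ belongs to two consecutive such intervals and the right one has positive length, there is $\epsilon >0$ so that some proper base sequence is optimal throughout $[\otheta, \otheta+\epsilon]$. By Theorem \ref{thm.alg-line}(ii), and because $\L(0)$ was chosen interior to a validity region at initialization, every $\L(\theta)$ with $\theta<1$ lies in the interior of the parametric-feasible region $\F$. Combined with Theorem \ref{thm.interior}, this implies that, after possibly shrinking $\epsilon$, $\L(\theta)$ lies in the interior of a single validity region $\W$ for $\theta \in (\otheta,\otheta+\epsilon)$, and $\L(\otheta)$ is on the boundary of $\W$. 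Again by Assumption \ref{asm.smpl}, the collision as $\theta \searrow \otheta$ from $\W$ is of type (a)--(f).

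With both collisions now known to be of type (a)--(f), Theorem \ref{thm.boundary-unique} produces a \emph{unique} base sequence from the M-CLP pivot, establishing the first assertion of the proposition. Theorem \ref{thm.facet} then identifies this unique base sequence with the optimal base sequence of $\W$. Therefore $(\K,\J)^{(\ell+1)}$ equals the optimal base sequence in $\W$, and $\L(\theta)$ is an interior point of its validity region for every $\theta \in (\otheta,\otheta+\epsilon)$, which is exactly the second assertion.

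The main obstacle is the geometric argument in the second paragraph: one must exclude the pathological possibility that $\L(\theta)$, for $\theta$ slightly above $\otheta$, lies on a common boundary of several validity regions rather than in the interior of a single one. This is controlled by the finite partition in Theorem \ref{thm.alg-line}(iii) (each piece has \emph{positive} length, so only finitely many boundary crossings can occur and the next crossing is strictly above $\otheta$) together with the characterization of interior points in Theorem \ref{thm.interior}. Everything else in the proof is an invocation of already-established uniqueness and optimality results.
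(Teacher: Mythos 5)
Your proposal is correct and follows essentially the same route as the paper's proof: establish that $\L(\otheta)$ sits on the common boundary of $\V$ and a neighbouring validity region $\W$ with non-empty interior, use the simplifying assumption to get collisions of type (a)--(f) from both sides, and then invoke Theorems \ref{thm.boundary-unique} and \ref{thm.facet} to conclude. The only cosmetic difference is that the paper identifies $\W$ via an explicit induction on $\ell$ together with closedness of validity regions, whereas you lean on the finite-partition structure from Theorem \ref{thm.alg-line}(iii) plus Theorem \ref{thm.interior}; the substance of the argument is the same.
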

\begin{proof}
Clearly $\otheta < 1$ and hence by Theorem \ref{thm.alg-line} $\L(\otheta)$ is an interior point of the parametric-feasible region. We prove by induction that for iteration $\ell$ the base sequence $(\K,\J)^{(\ell)}$ has a validity region with non-empty interior, i.e. there is a $\theta < \otheta$ big enough, such that the base sequence $(\K,\J)^{(\ell)}$ is optimal for this $\theta$ and $\L(\theta)$ is an interior point of its validity region. Clearly this assumption holds for  $\ell = 0$. Assume the induction hypothesis for $\ell$ which is optimal for $\V$.  Then since validity regions are closed (Theorem S3.7), there exists another validity region with non-empty interior $\W$, such that $\L(\otheta) \in \W$. Furthermore, by the Simplifying Assumption \ref{asm.smpl} the collision from $\W$ is of type (a)--(f) and hence by Theorem \ref{thm.facet} base sequence $(\K,\J)^{(\ell+1)}$ that is optimal for $\W$ can be obtained from $(\K,\J)^{(\ell)}$ by a single unique M-CLP pivot. Hence, there  exists $\theta > \otheta$ small enough, such that base sequence $(\K,\J)^{(\ell+1)}$ is optimal for this $\theta$ and $\L(\theta)$ is an interior point of its validity region $\W$.  This completes the proof.
 \rule{0.5em}{0.5em} \end{proof}

\begin{proposition}
The total number of iterations is bounded by $\left(\!\begin{smallmatrix}4(K+J) \\ 2(K+J) \end{smallmatrix}\! \right)2^{\binom{K+J}{K}}$.
\end{proposition}
\begin{proof}
Follows from Theorem \ref{thm.alg-line}.
\qquad
 \rule{0.5em}{0.5em} \end{proof}



\section{Illustrative Example Revisited}
\label{sec.example2}
We now return to the example problem (\ref{eqn.example}) solved in Section \ref{sec.example}, and examine it in view of the algorithm of Section \ref{sec.algorithm_simpl}.

We  first list all the admissible bases of the matrix $[A\;I]$.  
Each of the primal and dual bases is described by the rates of the  primal and dual state variables, which determine both the primal and the compatible dual bases. These in fact are the sets $\K,\J$.
\[
\begin{array}{l}
B_2=(\dx_1,\dq_2),\quad 
B_5=(\dx_2,\dq_1),\quad  
B_1=(\emptyset),\quad  
B_3=(\dx_2,\dq_2),\quad  
B_4=(\dx_1,\dq_1),\\ 
B_6=(\dx_1,\dx_2,\dq_1,\dq_2). 
\end{array}
\]
For the rate parameters  $b,c$ in (\ref{eqn.exampledata}), the objective values of the Rates-LP for these basic solutions are:
\[
V(B_2) = 9, \; V(B_5) = 8, \; V(B_1) = 7, \; V(B_3) = 6, \; V(B_4) = 6, \; V(B_6) = 0. 
\]
The resulting control rates and state rates for these basic solutions are given in Figure \ref{fig.rates}.
\begin{figure}[h!t]
\centering 
\includegraphics[width=4.6in]{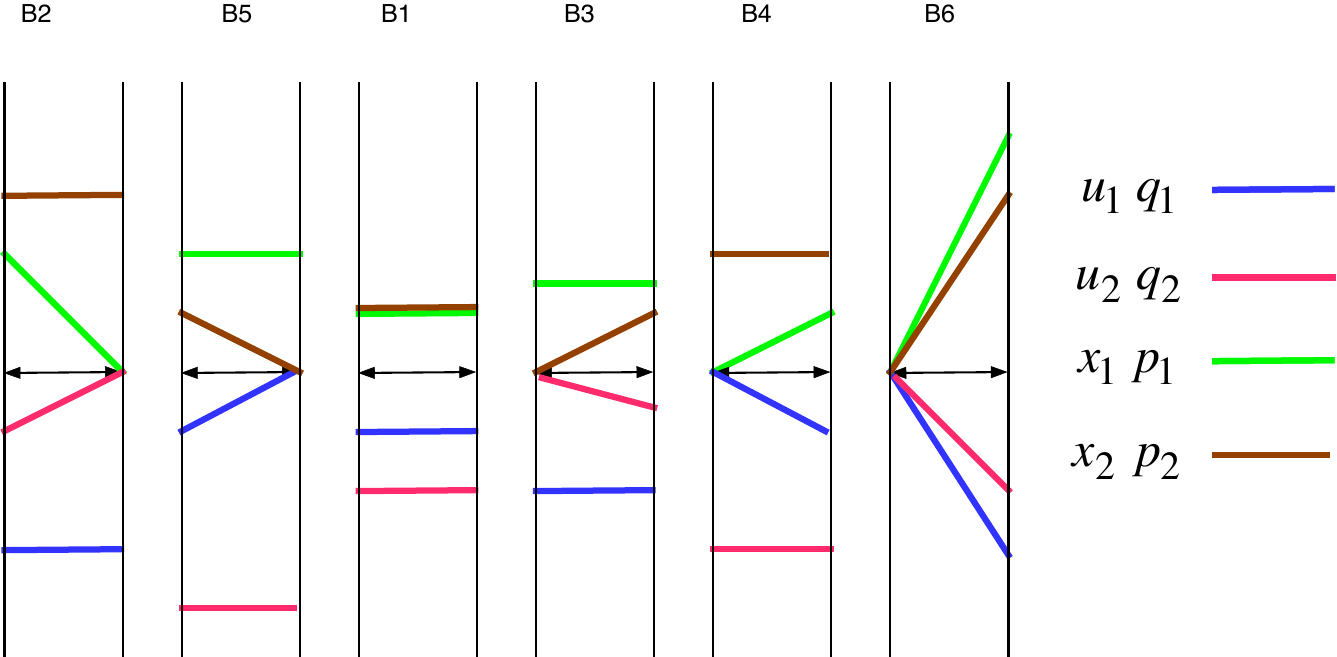} 
\caption{The primal and dual rates for all the admissible basic solutions of the Rates-LP.} 
\label{fig.rates}
\end{figure}

{\bf Note:}  It is never necessary to calculate all the basic solution of the Rates-LP with matrix $[A\;I]$.  The bases which are needed are obtained as necessary when internal pivots are performed, by pivoting from adjacent bases.  We list them here to show the rates, and we can do it in this example because it is of such small dimension.

The parametric line of boundary parameters for solving the problem is specified  in (\ref{eqn.exampleline}).

We now describe what happens at each of the iterations.  We will discuss mainly the boundary pivots. The internal pivots are according to the SCLP algorithm of \cite{weiss:08}.

\subsubsection*{Initialization}
The initial solution uses zero controls, $U(t)=P(t)=0$, and has states $x(t) = \beta+bt$, $q(t)=-\gamma-ct$.  The base sequence includes boundary values and the single internal basis $B_6$.  We write it as (here we list the internal bases, rather than the $\{\K_n,\J_n\}_{n=1}^N$):
\[
(\K, \J)^{(0)} = \{ \left[\begin{array}{c} x^0_1 \\ x^0_2 \\ \bq^0_1 \\ \bq^0_2 \end{array} \right]  , B_6 , 
\left[\begin{array}{c} \bx^N_1 \\ \bx^N_2 \\ q^N_1 \\ q^N_2 \end{array} \right]  \}
\]
Note that there is a jump in $x(t)$ at $T$:  $\bx^N - x^N =  \lambda  - A (U(T)-U(T-))$, and a similar jump in $q(t)$:  $\bq^0 - q^0 = - \mu  + A\Tt (P(T)-P(T-))$.  These jumps are  shown in Figure \ref{fig.p2iteration1}.
All the state variables in this solution are free.

\subsubsection*{First iteration}
At $\theta^{1}=1/11$ there is a collision of type (e), as $q_1(T)=\bq^0_1$ shrinks to 0.
This requires a boundary pivot.  No pre-boundary internal pivot is necessary.
At this point the boundary values are:
\[
\begin{array}{l}
x_1^\bullet = \frac{24}{11}, \; \tx_1=0, \; x_2^\bullet=1, \; \tx_2=0,\quad
q_1^\bullet = \frac{10}{11} , \; \tq_1 = \frac{39}{11} ,\;  q_2^\bullet = 2,  \; \tq_2 = \frac{26}{11}, \\
\\
\bx^N_1 = 6 ,\; \bx^N_2 = \frac{40}{11},\; \qquad \bq^0_1 = 0,\;  \bq^0_2 = \frac{12}{11}.
\end{array}
\]
We can now formulate the Boundary-LP problems (\ref{eqn.modprimalpbLP}), (\ref{eqn.moddualpbLP}), and construct the dictionary according to the rules in Section \ref{sec.dictionary}.  In the dictionary, $\bq^0_1$ is 0 and needs to leave the dual basis, i.e. $\bu^0_1$ will enter the primal basis.  Examination of the dictionary shows that $x_2^\bullet$ should leave the primal basis,  i.e.  $\bp^0_2$ will enter the dual basis.  Since the test ration is $\ne 0$,  this is a boundary pivot of type II.  

The {\em boundary dictionaries}, before and after the pivot are:
\[
\D^{(0)} = \begin{array}{r|c|cccc}
 & &  \uparrow & & & \\
 & &  \bq^0_1 & \bq^0_2 & q^\bullet_1 & q^\bullet_2 \\
 \hline
 & & 0 & \frac{12}{11} & \frac{10}{11} & 2 \\
 \hline
 x^\bullet_1 &  \frac{24}{11} & 2  & 1 & 0 & 0 \\
\to x^\bullet_2 &  1 & {\color{red}\bf 1} & 1 & 0 & 0  \\
 \bx^N_1 &  6 & 2 & 1 & 2 & 1 \\
 \bx^N_2 &  \frac{40}{11} & 1 & 1 & 1 & 1 
\end{array}\;,
\qquad
\D^{(1)} = \begin{array}{r|c|cccc}
 & &   & & & \\
 & &  \bp^0_2 & \bq^0_2 & q^\bullet_1 & q^\bullet_2 \\
 \hline
 & & 0 & \frac{12}{11} & \frac{10}{11} & 2  \\
 \hline
 x^\bullet_1 &  \frac{2}{11} & -2 & -1 & 0 & 0 \\
 \bu^0_1 & 1 & 1 & 1 & 0 & 0  \\
 \bx^N_1 &  4 & -2 & -1 & 2 & 1  \\
 \bx^N_2 &  \frac{29}{11} & -1 & 0 & 1 & 1 
\end{array}
\]
Following this pivot, in the interval of  $(\theta^{(1)},\theta^{(2)})=(1/11,1/6)$ there are two impulse controls:
$\bu^0_1$, and $\bp^0_2$,  which replaced the boundary values $\bq^0_1$ and $x^0_2$.

\subsubsection*{2nd to 5th iterations}
These are all internal pivots.  

At $\theta =1/6$ there is a collision of type (a) at $t_0$, where $q^0_1=q_1(T-)$ shrinks to zero.   
In the internal pivot the internal base sequence is augmented, from $B_6$ to $B_2,B_6$.  
In the collision from the other side, $q_1(t)$ becomes free (it is free for $\theta < 1/6$ and tied for $\theta > 1/6$), so it is a type  (d) collision form the other side.

At $\theta =2/9$, there is a collision of type (a) at $t_1$, where $x^1_1=x_1(t_1)$ shrinks to zero.  In the internal pivot the internal base sequence is augmented, from $B_2,B_6$ to $B_2,B_3,B_6$.    The collision from the other side is again type (d). 

At $\theta =4/9$, there is a collision of type (a) at $t_1$, where $q^1_2=q_2(T-t_1)$ shrinks to zero.  
 In the internal pivot the internal base sequence is augmented, from $B_2,B_3,B_6$ to $B_2,B_1,B_3,B_6$.  The collision from the other side is type (d).

At $\theta =5/9$, the interval $(t_2,t_3)$ with the rates basis $B_3$ shrinks to 0, 
and $|B_1 \backslash B_6| = |\{u_1 ,u_2 \} \backslash \{\dx_1,\dx_2\}| = 2$, so this is a type (b) internal collision.    In the internal pivot, $B_3$ is deleted from $B_2,B_1,B_3,B_6$, and then $B_4$ is inserted in its place, so the new internal base  sequence is $B_2,B_1,B_4,B_6$.  
The collision form the other side is also type (b).

\subsubsection*{6th iteration}
In the interval $\theta > 5/9$ the optimal base sequence is
\[
(\K, \J)^{(5)} = \{ \left[\begin{array}{c} x^0_1 \\ \bu^0_1 \\ \bp^0_2 \\ \bq^0_2 \end{array} \right]  , B_2,B_1,B_4,B_6, 
\left[\begin{array}{c} \bx^N_1 \\ \bx^N_2 \\ q^N_1 \\ q^N_2 \end{array} \right]  \}
\]
All three boundary states in $\K_0,\J_{N+1}$, namely $x_1(t),q_1(t),q_2(t)$, are now tied. 

At $\theta = 13/19$, $\bx^N_2$ shrinks to zero, which is a collision of type (e), and a boundary pivot is needed.  There is no pre-boundary internal pivot.  The boundary values at this point are: 
\[
\begin{array}{l}
x^\bullet_1 = 0,\; \tx_1=\frac{26}{19},\; \bu^0_1 = 1, \quad 
q^\bullet_1 = 0,\; \tq_1= \frac{17}{19},\; q^\bullet_2 = 0,\; \tq_2= \frac{4}{19}, \\
\\
\bp^0_2 = \frac{6}{19},\;  \bq^0_2 = \frac{13}{19},\; \quad \bx^N_1= \frac{13}{19},\; 
\bx^N_2 = 0.
\end{array}
\]
The dictionary for the pivot is:
\[
\D^{(5)} = \begin{array}{r|c|cccc}
 & &   & & & \\
 & &  \bp^0_2 & \bq^0_2 & q^\bullet_1 & q^\bullet_2 \\
 \hline
 & & \frac{6}{19} & \frac{13}{19} & 0 & 0 \\
 \hline
 x^\bullet_1 &  0 & -2 & -1 & 0 & 0 \\
 \bu^0_1 & 1 & 1 & 1 & 0 & 0  \\
 \bx^N_1 &   \frac{13}{19} & -2 & -1 & 2 & 1  \\
 \bx^N_2 &  {\color{red}0} & -1 & 0 & 1 & 1 
\end{array}
\]
The variable that shrunk to 0 is $\bx^N_2$, and it will leave the primal basis.
We now need to decide on the type of pivot.  In this case, the dual boundary values include $q^\bullet_1 = q^\bullet_2 =0$, and both have non-zero pivot elements, so this is a boundary pivot of type I.  There is no need to perform a pivot operation on $\D^{(5)}$.

The variable that shrinks to zero on the other side, i.e. from $\theta >  13/19$ is the dual variable to $\bx^N_2$, which is $\bp^N_2$.  The new optimal base sequence is:
\[
(\K, \J)^{(6)} = \{ \left[\begin{array}{c} x^0_1 \\ \bu^0_1 \\ \bp^0_2 \\ \bq^0_2 \end{array} \right]  , B_2,B_1,B_4,B_6, 
\left[\begin{array}{c} \bx^N_1 \\ \bp^N_2 \\ q^N_1 \\ q^N_2 \end{array} \right]  \}
\]

\subsubsection*{Last iteration}
This iteration starts with the optimal base sequence $(\K, \J)^{(6)}$, and this remains optimal for all $13/19 < \theta < 1$.  In the interior of this region there are 4 intervals, and there are three impulse controls,  $\bu^0_1,\, \bp^0_1,\, \bp^N_2$.  When $\theta$ reaches 1, a multiple collision occurs, as several items shrink to zero.  This is typical, because at that point we have $\lambda=\mu=0$, so the solution of the boundary problem is degenerate.  The things that happen at $ \theta = 1$ are:  the interval $(t_4,T)$ shrinks to zero, the impulse control $\bp^0_2$ shrinks to zero,  $x^N_2$ shrinks to zero, and $q^N_2$ shrinks to zero.   

The final solution of the problem consists of three intervals, and two impulse controls, $\bu^0_1$ at primal time 0, and $\bp^N_2$ at dual time 0.  This is the solution displayed in Figure  \ref{fig.p2full}.

\appendix

\section{Appendix}
\subsection{Algorithm under general settings}
\label{sec.general}
Under the Simplifying Assumption \ref{asm.smpl}, all collisions were single.  We now discuss the algorithm without the simplifying assumption.  The main idea is that whenever a multiple collision is discovered in step $\ell$ of the algorithm, going from $\L(\theta_\ell)$ to $\L(\theta_{\ell+1})$  through an interior of validity region $\V$, one moves to an interior point of $\V$ that is close to the current parametric line, and one restarts from this revised  point on a new parametric line leading to $\L(1)$.  

The general algorithm works exactly as the simplified algorithm described in Section \ref{sec.algorithm_simpl} when collisions at point $\L(\theta_{\ell+1})$ is of type (a)-(f) from both $\V$ and $\W$ sides. However, if collision from the $\V$ or the $\W$ side is a multiple collision there are some additional steps that build a new parametric line $\L'(\theta)$.  

We extend the collision classification given in Section \ref{sec.collisions} by the following types:
\begin{itemize}
\item 
{\bf Pre-$\otheta$ multiple collision.} This is a multiple collision, that is discovered when the solution for the endpoint $\otheta$ is calculated.  It is then seen if the set of values of $\bH_\setP^\ell$ that shrink to zero
\[
\M = \argmax_{\lbrace r: \bH_r^\ell \in \bH_\setP^\ell \rbrace}\left(0, \frac{-\delta \bH^\ell_r}{\bH^\ell_r}\right)
\]
indicates a multiple collision. 
\item
{\bf At-$\otheta$ multiple collision.} 
If there is no pre-$\otheta$ collision, and there is a collision of types (d)-(f) from the $\V$ side, but in the boundary pivot in which $\bv$ leaves the boundary basis there are several boundary variables with equal ratio $>0$ that are candidates to enter the basis, this is an at-$\otheta$ multiple collision. In that case, these candidates to enter are a set of boundary variables that shrink to zero from the $\W$ side as $\theta \searrow \otheta$. The ratio test, that is a part of boundary pivot discussed in Section \ref{sec.bound_pivot}, performs discovery of this collision.
\item 
{\bf Post-$\otheta$ multiple collision.}
If there is no pre-$\otheta$ or at-$\otheta$ multiple collision, 
then there could be a multiple collision from the $\W$ side in which several local minima of the state variables shrink to zero. This can happens when there is a pivot of type II, and $\bw^*=x_k^\bullet$ or $\bw^*=q_j^\bullet$, and it is discovered by the test for multiple collision from $\W$ side discussed in Section \ref{sec.bound_pivot}.
%
%
%
\item
{\bf Single collision.} 
This is a collision that occurs under the simplifying assumption, i.e. it is a collision of one of the types (a)-(f) from both the $\V$ and the $\W$ side. 
\end{itemize}
It is easy to see that these collisions cover all possible collisions that may occur along the parametric line.

For all these types of collisions one can choose an interior point $\L(\theta) \in \V$, a direction that is orthogonal to the current parametric line, denoted by  $\vL^{\bot}$, and a step size $\epsilon$ and build a new parametric line $\L'(\theta)$ that goes through the two points $\L(\theta) + \epsilon \vL^{\bot}$ and $\L(1)$ and this line can be chosen to satisfy the  following properties:
\begin{itemize}
\item 
$\L(\theta) + \epsilon \vL^{\bot}$ is an interior point of $\V$, and the line reaches the boundary of $\V$ at a $\otheta'$.
\item Parametric line $\L'(\theta)$ goes through all the validity regions that were crossed by the parametric line $\L(\theta)$, from the point $\L(0)$ up to the point $\L(\otheta)$.
\item 
If there is a pre-$\otheta$ multiple collision at $\L(\otheta)$ from the $\V$ side, then there is either at-$\otheta$, post-$\otheta$ or a single collision at $\L'(\otheta')$.
\item If there is an at-$\otheta$ multiple collision at $\L(\otheta)$ from the $\V$ side, then there is either post-$\otheta$ or a single collision at $\L'(\otheta')$.
\item 
If there is a post-$\otheta$ multiple collision at $\L(\otheta)$ from the $\V$ side, then there is a single collision at $\L'(\otheta')$.
\end{itemize}
One can see that under such a policy the algorithm cannot cycle and it will still find an optimal solution for the goal set of boundary parameters at the point $\L(1)$ that is shared by all the parametric lines in a finite number of iterations bounded by $\binom{4(K+J)}{2(K+J)}\;2^{\binom{K+J}{K}}$. 

We leave the concrete calculations of an interior point $\L'(\theta) \in \V$, an orthogonal direction $\vL^{\bot}$ and a step size $\epsilon$ that satisfy  the requirements of this policy   out of this paper.   We state that this can be done, but it will be more appropriate to deal with the details when one creates an implementation of the algorithm.  For further discussion of the requirement of perturbations see \cite{shindin:16}

\subsection{Discussion of $\lambda$ and $\mu$}
\label{sec.lambdamu}

The motivation for including $\lambda,\,\mu$ in the M-CLP/M-CLP$^*$ formulation is that otherwise (i.e. when $\lambda=\mu=0$), solutions in the interior of validity regions may not have $H_{\setP}>0$, and validity regions may not have disjoint interiors.  

Strong duality and structure properties of M-CLP/M-CLP$^*$ with $\lambda=\mu=0$ remain valid for solutions of $\lambda,\mu \ne 0$, except for right continuity of solutions at $t=0$.  It is possible that optimal solutions now  have $U(0-)=0 < U(0) < U(0+)$ (or  $P(0-)=0 < P(0) < P(0+)$).  

However, if we restrict the formulation to $\lambda \le 0$, $\mu \ge 0$, this cannot happen as we state in the next theorem.

\begin{theorem}
For feasible M-CLP/M-CLP$^*$ with $\lambda \le 0, \mu \ge 0$, there  exists a pair of optimal solutions $U_0(t), P_0(t)$ that are right-continuous at $0$.
\end{theorem}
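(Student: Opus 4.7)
The plan is to start from any pair of optimal solutions $(U, P)$, whose existence is guaranteed by the strong duality results of \cite{shindin-weiss:13}, and to show that any failure of right-continuity at $0$ can be ``absorbed'' into the impulse control there without altering the objective value. Set $\Delta := U(0+) - U(0) \ge 0$ and define $U_0(0-) := 0$, $U_0(0) := U(0+)$, and $U_0(t) := U(t)$ for $0 < t \le T$; then $U_0$ is right-continuous at $0$ by construction and inherits right-continuity on $(0, T]$ from $U$. Take $x_0(0) := x(0+) \ge 0$ and keep $x_0(t) = x(t)$ for $t > 0$. The primal constraint at $t = 0$ holds since
\[
A\, U_0(0) + x_0(0) = A\, U(0+) + x(0+) = \lim_{t \downarrow 0}(A\, U(t) + x(t)) = \beta,
\]
and the constraints for $0 < t \le T$, including the impulse equation at $T$, are inherited from $U$. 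Define $P_0, q_0$ analogously with $\Delta_P := P(0+) - P(0) \ge 0$.

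The key step is to compare objective values. The Lebesgue--Stieltjes integral $\int_{0-}^T (\gamma + (T-t)c)\Tt dU_0(t)$ coincides with $\int_{0-}^T (\gamma + (T-t)c)\Tt dU(t)$: the measure $dU_0$ places a single atom of mass $U_0(0) = U(0) + \Delta$ at $0$ with integrand value $\gamma + Tc$, whereas $dU$ places an atom of mass $U(0)$ at $0$ together with an immediate jump of mass $\Delta$ at $0+$, both with the same integrand value $\gamma + Tc$ by continuity of the integrand at $0$. Hence the M-CLP objective changes only through its explicit boundary term, namely by $\mu\Tt(U_0(0) - U(0)) = \mu\Tt \Delta$. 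Since $\mu \ge 0$ and $\Delta \ge 0$ this difference is $\ge 0$; but $U$ is optimal for the max problem and $U_0$ is feasible, so it is also $\le 0$. Therefore $\mu\Tt \Delta = 0$ and $U_0$ is also optimal.

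The argument for $P_0$ is symmetric with signs reversed: the change in the M-CLP$^*$ objective is $\lambda\Tt \Delta_P$, which is $\le 0$ by $\lambda \le 0$ and $\Delta_P \ge 0$, and $\ge 0$ by optimality of $P$ in the min problem, so $\lambda\Tt \Delta_P = 0$ and $P_0$ is optimal. The main obstacle I expect is the measure-theoretic bookkeeping in the second paragraph --- making precise the claim that moving the mass $\Delta$ from an atom immediately after $0$ to the atom at $0$ leaves the integral invariant --- since this is precisely the step that exploits continuity of the integrand at $0$ and where the sign restrictions $\mu \ge 0$, $\lambda \le 0$ do their essential work through the optimality-forcing inequalities.
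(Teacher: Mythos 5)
Your proof is essentially the same as the paper's: both absorb the mass $\Delta = U(0+)-U(0)$ into the impulse at $0$, verify feasibility at $t=0$ via $AU(0+)+x(0+)=\beta$, and show the objective changes only by $\mu\Tt\Delta \ge 0$, which optimality forces to vanish. The one place you are looser than the paper is the opening citation: existence of optimal solutions for the extended formulation with $\lambda\le 0,\ \mu\ge 0$ is not directly in \cite{shindin-weiss:13} (which needs a Slater-type condition); the paper handles this by first imitating Theorems D4.7 and D5.5(iii) under Slater and then invoking Theorem S4.1 of \cite{shindin-weiss:14} to drop the Slater hypothesis, a step-ladder your proposal skips.
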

\begin{proof}
Assume first  that the Slater type condition of \cite{shindin-weiss:13} holds.  Imitating the steps leading to Theorem D4.7 and D5.5(iii), M-CLP/M-CLP$^*$ of (\ref{eqn.gmpclp}), (\ref{eqn.gmdclp}) possess strongly dual optimal solutions $U_*(t), P_*(t)$, which are continuous piecewise linear on $(0,T)$.
  What remains to be checked is whether  $U_*(t),P_*(t)$ are right continuous at $0$.  
Let $\bu_*^0 = U_*(0)$ and $\bu_*^{0+} = U_*(0+) - U_*(0)$. Consider the following solution of M-CLP:
\[
U_0(t)=\left\{ \begin{array}{ll} 
\displaystyle \bu_*^0 + \bu_*^{0+} = U_*(0+), & t = 0,  \\
\displaystyle U_*(t), & t > 0.
\end{array} \right.
\]
We check  that $U_0(t)$ is a feasible solution of M-CLP at $t=0$. By feasibility of $U_*(t)$  we have $A U_0(0) = A U_0(0+) = A U_*(0+) \le \beta$, and therefore $U_0(t)$ is a feasible solution of M-CLP. Next we compare the objective values produced by $U_*(t)$ and $U_0(t)$.
\[
\begin{array}{c}
\displaystyle \mu\Tt U_0(0) + \int_{0-}^T (\gamma+ (T-t)c)\Tt dU_0(t) - \mu\Tt U_*(0) - \int_{0-}^T (\gamma+ (T-t)c)\Tt dU_*(t) = \\ 
\displaystyle = \mu\Tt (\bu_*^0 + \bu_*^{0+}) + (\gamma + cT)\Tt (\bu_*^0 + \bu_*^{0+}) + \int_{0+}^T (\gamma+ (T-t)c)\Tt dU_*(t) - \\
\displaystyle - \mu\Tt \bu_*^0 - (\gamma + cT)\Tt \bu_*^0 - (\gamma + cT)\Tt \bu_*^{0+} - \int_{0+}^T (\gamma+ (T-t)c)\Tt dU_*(t) = \mu\Tt \bu_*^{0+} \ge 0.
\end{array}
\]
Hence, the right-continuous $U_0(t)$ is an optimal solution of M-CLP.  The construction of $P_0(t)$ is similar.  Note that $\mu \ge 0$ (and similarly $\lambda \le 0$) is indeed necessary for the proof.
To complete the proof we use the detailed structure of the solution described in \cite{shindin-weiss:14}, where the second boundary equations (S11) have been changed to the equations (\ref{eqn.boundary2}), and  use Theorem S4.1 to show that the Slater-type condition is not necessary. \qquad
 \rule{0.5em}{0.5em} \end{proof}

It is easy to check that  all the results of \cite{shindin-weiss:13,shindin-weiss:14} hold for the extended formulation with $\lambda \le 0$ and $\mu \ge 0$.  Further discussion of the motivation and examples are included in \cite{shindin:16}.

\subsection{ Proof of Theorem \ref{thm.interior}}
\label{sec.validityappendix}
\begin{proof}
{\em (i) $\implies$ (iii):} Let $\rho$ be an interior point of $\V$ and let $M$, $\bR$ be the corresponding matrix and r.h.s. of (\ref{eqn.ex-coupled}) for the base sequence $(\K_n, \J_n)_{n=0}^{N+1}$ and the point $\rho$. Assume  contrary to the theorem that the solution of (\ref{eqn.ex-coupled}) is not unique. 

 
We now consider the solution of  (\ref{eqn.ex-coupled}), to see which components may be  not unique.  By Theorem D5.5(iii), all $\tau_n$ and $x_k^0,\,k\in \K^=,\,q_j^N,\,j\in \J^=$ are unique.  All the boundary values listed in Equations (\ref{eqn.compatible}) are uniquely determined to be 0.  Also, all of $x_k(t),\,k\in \K^=$ and $\,q_j(t),\,j\in \J^=$ are uniquely determined by the unique values of the $x_k^0,\,k\in \K^=,\,q_j^N,\,j\in \J^=,\, \tau_n$.  Hence, only components of $\bu^0,\bu^N,\bp^0,\bp^N,\bx^N,\bq^0$ and $x_k^0,\,k \in \K^{\uparrow\downarrow},\,q_j^N,\,j \in \J^{\uparrow\downarrow}$ and $x_k(t),q_j(t),\, k\in \K^{\uparrow\downarrow},j\in \J^{\uparrow\downarrow}$ may be non-unique.  By (\ref{eqn.lineq2}), if $x_k(t),q_j(t),\, k\in \K^{\uparrow\downarrow},j\in \J^{\uparrow\downarrow}$ are non-unique then $x_k^0,\,k \in \K^{\uparrow\downarrow},\,q_j^N,\,j \in \J^{\uparrow\downarrow}$ must be non-unique.  

We now note that $\bu^0,\bu^N,\bp^0,\bp^N,\bx^N,\bq^0$ and $x_k^0,\,k \in \K^{\uparrow\downarrow},\,q_j^N,\,j \in \J^{\uparrow\downarrow}$ are determined by equations 
(\ref{eqn.boundary1}) and by equations (\ref{eqn.boundary2}) after substitution of (\ref{eqn.lineq2}).  But these equations have $\beta,\gamma,\lambda,\mu$ on the r.h.s..

If the solution is non-unique then $M$ must be singular, and so we must have:  $M_{r'} = \alpha_{r_1} M_{r_1} + \dots + \alpha_{r_m} M_{r_m}$, for which also $\bR_{r'} = \alpha_{r_1} \bR_{r_1} + \dots + \alpha_{r_m} \bR_{r_m}$ for some of the rows of $M$.  But by the above argument, all these rows must have some component of $\beta,\gamma,\lambda,\mu$ on the r.h.s..  

  Consider $\delta \rho$ in which we change some of the values of  
  $\beta,\gamma,\lambda,\mu$ in such a way that $\delta\bR_{r'} \ne \alpha_{r_1} \delta \bR_{r_1} + \dots + \alpha_{r_m} \delta \bR_{r_m}$.  One can see that for any $\Delta > 0$ the equations (\ref{eqn.ex-coupled}) with r.h.s. $\bR + \Delta \delta \bR$ have no solution, and therefore $\rho$ cannot be an interior point. 
 This shows that (i) implies that the solution must be unique.
 
 We now consider the components of $\bH_\setP$.  Look first at  $\tau_n$ and $x_k(t),\,k\in \K^=,\,q_j(t),\,j\in \J^=$.  Any of the components of these which belong to $\bH_\setP$ can be changed by choosing an appropriate change in $\beta,\gamma$ or $T$.
By the above discussion, all the remaining components of  $\bH_\setP$ can be changed by appropriate choice of $\beta,\gamma,\lambda,\mu$.  Assume now that for  some element  $H_r$ of $\bH_\setP$, the value is $H_r=0$.  Then we can choose $\delta \rho$ such that for the corresponding $\delta \bR$ we have in the solution of $\delta \bR = M \delta H$ that $\delta H_r \ne 0$.  But in that case the solution of (\ref{eqn.ex-coupled}) for $\rho + \Delta \delta \rho$ will have $H_r<0$ for all $\Delta > 0$ or for all $\Delta <0$, in contradiction to  the assumption that  $\rho$ is an interior point.  This proves that (i) implies that all the components of $\bH_\setP$ are positive. 

{\em (iii) $\implies$ (ii):} Consider $\rho \in \V$ and let $M$, $\bR$ be the corresponding matrix and r.h.s. of (\ref{eqn.ex-coupled}) for the base sequence $(\K_n, \J_n)_{n=0}^{N+1}$ and the point $\rho$. Assume  that the solution of (\ref{eqn.ex-coupled}) for $\rho$ is unique and satisfies $\bH_\setP>0$. Hence, for this base sequence the boundary values are unique and strictly complementary slack, which implies the  uniqueness of $(\K_n, \J_n)_{n\in \{0,N+1\}}$. Moreover, all interval lengths are $>0$ and all values of $x^n,q^n$ that are members of $H_\setP$ are $>0$, and hence at each $t_n, n=1,\ldots,N-1$ there is exactly one state variable that can leave or enter the basis, and so there cannot be another sequence of bases with zero length intervals between any of $B_1,\ldots,B_N$.  Furthermore, at time $T$, none of the variables $x_k^N,\,k\in\K_N$ is $0$, so no new intervals of length 0 are possible after $B_N$, and similarly before $B_1$.  Hence base sequence $(\K_n, \J_n)_{n=0}^{N+1}$ is unique.

{\em (ii) $\implies$ (i):}  
Assume $\rho \in \V$ for the unique base sequence $(\K_n, \J_n)_{n=0}^{N+1}$. Assume contrary to the theorem that $\rho$ is a boundary point of the $\V$. Then, as $\rho$ is an interior point of the parametric-feasible region, from the convexity of the parametric-feasible region and closedness of validity regions (Theorem S3.7) it follows that $\rho$ belongs to a boundary of another validity region $\V'$. This contradicts  the uniqueness of the $(\K_n, \J_n)_{n=0}^{N+1}$ and hence $\rho$ is an interior point of the $\V$.
 \rule{0.5em}{0.5em} \end{proof}

\subsection{Proofs for Section \ref{sec.pivot}}
\label{sec.uniqueproof}
\begin{proof}[Proof of Theorem \ref{thm.boundary-unique}]
The rules for the M-CLP pivot define a unique  $(\K'_n,\J'_n)_{n=0}^{N'+1}$, if we can show the following:
\begin{compactenum}[(i)]
\item
There is no multiple collision discovered from the $\W$ side.
\item
If the boundary dictionary on the $\V$ side is not unique, then the results of the M-CLP pivot do not depend on the choice of boundary dictionary.
\end{compactenum}
That (i) holds is true by the assumption that the collision from $\W$ is of type (a)--(f) i.e. it is not a multiple collision.

We now show (ii). 
Consider the case that the leaving variable is a primal variable $\bv=\bv_i=0$ (in row $i$ of the dictionary $\D$).  Assume first that for all the dual variables $\bw_l^*$ such that $\bw_l^*=0$, also $\hbA_{i,l}=0$.  If $\D$ is not a unique dictionary then we can pass from $\D$ to any other dictionary $\D''$ by a series of pivots in which a primal variable with value 0 is exchanged for a dual variable of value 0.  In that case there must be a pair $\bv_k=0$ and $\bw_l^*=0$ such that $\hbA_{k,l} \ne 0$.  Otherwise the dictionary is unique.  Consider then the pivot from $\D$ to $\D'$ obtained by pivoting on  $\hbA_{k,l} \ne 0$.   We assumed that $\hbA_{i,l}=0$, and as a result, the pivot will not change row $i$.  It will also not change the values of the dual variables.  Hence the ratio test will yield the same $\bw^*_r>0$ that will leave the dual basis, with $\bw_r$ entering the primal basis, for all possible dictionaries.

Assume now that for some $\bw^*_j=0$ the element $\hbA_{i,j}\ne 0$.  Then by the above argument, for any other possible dictionary $\D''$, there will be some 
$\bw^*_l=0$ such that $\hbA_{i,l}\ne 0$.  But in that case it follows that the pivot under all possible dictionaries will be a pivot of type I.  But in that case all that will happen is that $\bv$ will leave the primal basis, and $\bv^*$ will enter the dual basis, independent of the dictionary chosen.
 \rule{0.5em}{0.5em} \end{proof}

\begin{proof}[Proof of Theorem \ref{thm.facet}]
We distinguish two possibilities.  
\begin{compactitem}[-]
\item
Case (1):  If there are only two optimal base sequences at $\L(\otheta)$, then $(\K_n,\J_n)_{n=0}^{N+1}$ is optimal in $\V$ and 
$(\K'_n,\J'_n)_{n=0}^{N'+1}$ is optimal in $\W$.
\item
Case (2):  There is another (or several) optimal base sequence, $(\K''_n,\J''_n)_{n=0}^{N''+1}$ which is optimal at $\L(\otheta)$.  In that case  we will show that it has a validity region with an empty interior, so again we  have that  $(\K_n,\J_n)_{n=0}^{N+1}$ is optimal in $\V$ and 
$(\K'_n,\J'_n)_{n=0}^{N'+1}$ is optimal in $\W$.
\end{compactitem}
We now consider all types of M-CLP pivots, and check if they fall into case (1) or case (2), and in case (2) we show that all other optimal sequences have empty validity regions.

If the pivot is an Internal-SCLP pivot then all the boundary values are positive, so $(\K'_n,$ $ \J'_n)_{n \in \{0,N'+1\}} = (\K_n,\J_n)_{n \in \{0,N+1\}}$ is unique.  Furthermore, by \cite{weiss:08}, if the collision is not a multiple collision then $(\K_n,\J_n)_{n=1}^{N}$ and $(\K'_n,\J'_n)_{n=1}^{N'}$ are the only two optimal base sequences at $\L(\otheta)$.  Hence we are in case (1).  

If the pivot is a type I boundary pivot and the collision from $\V$ is type (d), then 
all the boundary values at $\L(\otheta)$ are positive and $(\K'_n,\J'_n)_{n \in \{0,N'+1\}} = (\K_n, \J_n)_{n \in \{0, N+1\}}$ is unique, and there are exactly two internal sequences $(\K_n,\J_n)_{n=1}^{N}$ and $(\K'_n,\J'_n)_{n=1}^{N'}$ optimal at $\L(\otheta)$.  Hence we are in case (1).

If the pivot is a type I boundary pivot and the collision from $\V$ is type (e) or (f), 
Then the solution at $\L(\otheta)$ is unique, and away from $\L(\otheta)$ either $\bv>0$ or $\bv^*>0$ determine two unique boundary bases $(\K_n, \J_n)_{n \in \{0, N+1\}}$ and $(\K'_n,\J'_n)_{n \in \{0,N'+1\}}$.  Further more for each or those there is a unique  $(\K_n,\J_n)_{n=1}^{N}$ and  $(\K'_n,\J'_n)_{n=1}^{N'}$, so again we are in case (1).

Consider now the case of a type II pivot, in which $\bv=0$ at the collision and is  $>0$ on the $\V$ side, and it leaves the primal basis and its dual $\bv^*>0$ enters the dual basis, and $\bw^*>0$ leaves the dual basis and its dual $\bw=0$ enters the primal basis.  Then in $(\K_n, \J_n)_{n \in \{0, N+1\}}$ we have $\bv=0$, $\bw^*>0$, and in $(\K'_n, \J'_n)_{n \in \{0, N'+1\}}$ we have  
 $\bv^*>0$, $\bw=0$.  Apart from these two solutions of the Boundary-LP/LP$^*$, the only other solutions will be convex combinations of these two solutions, for which there is another base sequence $(\K''_n, \J''_n)_{n \in \{0, N''+1\}}$ in which the variables  $\bv^*, \bw^*$ are positive, and at the point $\L(\otheta)$ they can have values on a whole interval.  Hence for the base sequence $(\K''_n, \J''_n)_{n=0}^{N''+1}$ $\bH_\setP$ at the point $\L(\otheta)$ is not unique, and hence the matrix $M$ is singular. This implies by Corollary \ref{thm.nonsingular}  that the interior of the validity region of the sequence $(\K''_n, \J''_n)_{n=0}^{N''+1}$ is empty.
 \rule{0.5em}{0.5em} \end{proof}

\subsection{Proof of Theorem \ref{thm.alg-line}}
\label{sec.alg-line}
\begin{proof}
(i) By definition $\L(0), \L(1) \in \F$ and the parametric-feasible region is a closed convex polyhedral cone. 

(ii) If  $\L(0)$ is an interior point of a validity region then it is also an interior point of the parametric-feasible region.  By convexity of $\F$ its interior is convex, hence, if point $\L(1)$ is an interior point of the parametric-feasible region then the whole line $\L(\theta)$ belongs to the interior of the parametric-feasible region, otherwise all points of $\L(\theta)$ except $\L(1)$ are interior points of the parametric-feasible region.

(iii)  The parametric-feasible region is the union of validity regions of proper base sequences.  Hence $\L(\theta)$ is a union of intervals belonging to these validity regions.   Because validity regions are convex, no two intervals can belong to the same proper base sequence.  By Proposition \ref{thm.number} the number of proper base sequences is bounded by $\binom{4(K+J)} {2(K+J)}\; 2^{\binom{K+J}{K}}$.  Hence the number of intervals is finite.  It is then possible to choose a minimal number of proper base sequences which define a minimal number of intervals of positive length.  

Note, if part of the parametric line lies on a face of a validity region, then that interval belongs to more than one validity region.  In that case this interval will correspond to more than one proper base sequence.
\qquad
 \rule{0.5em}{0.5em} \end{proof}

\addcontentsline{toc}{chapter}{Bibliography}

\end{document}